\newtheorem{theorem}{Theorem}[section]
\newtheorem{lemma}[theorem]{Lemma}
\newtheorem{proposition}[theorem]{Proposition}
\newtheorem{corollary}[theorem]{Corollary}
\theoremstyle{definition}
\newtheorem{definition}[theorem]{Definition}
\newtheorem{example}[theorem]{Example}
\newtheorem{remark}[theorem]{Remark}
\numberwithin{equation}{section}
\newcommand{\vp}{\varphi}
\newcommand{\clb}{\mathcal{B}}
\newcommand{\clh}{\mathcal{H}}
\newcommand{\cll}{\mathcal{L}}
\newcommand{\clm}{\mathcal{M}}
\newcommand{\clo}{\mathcal{O}}
\newcommand{\clq}{\mathcal{Q}}
\newcommand{\cls}{\mathcal{S}}
\newcommand{\clw}{\mathcal{W}}
\newcommand{\clz}{\mathcal{Z}}
\newcommand{\mi}{\mathfrak{I}}
\newcommand{\bk}{k}
\newcommand{\bl}{l}
\newcommand{\D}{\mathbb{D}}
\newcommand{\T}{\mathbb{T}}
\newcommand{\Z}{\mathbb{Z}}
\newcommand{\bS}{\mathbb{S}}
\newcommand{\raro}{\rightarrow}
\begin{document}

\setcounter{page}{1}

\title[Commutant lifting, interpolation, and perturbations]{Commutant lifting, interpolation, and perturbations on the polydisc}

\author[Deepak]{Deepak K. D.}
\address{Indian Statistical Institute, Statistics and Mathematics Unit, 8th Mile, Mysore Road, Bangalore, 560059,
India}
\email{dpk.dkd@gmail.com }

\author[Sarkar]{Jaydeb Sarkar}
\address{Indian Statistical Institute, Statistics and Mathematics Unit, 8th Mile, Mysore Road, Bangalore, 560059,
India}
\email{jay@isibang.ac.in, jaydeb@gmail.com}

\subjclass[2010]{32A38, 47A57, 30E05, 47A13, 46J15, 30D55, 93C73, 15B05}

\keywords{Commutant lifting, Hardy space, polydisc, perturbations, bounded analytic functions, interpolation, $L^p$-spaces}

\begin{abstract}
The fundamental theorem on commutant lifting due to Sarason does not carry over to the setting of the polydisc. This paper presents two classifications of commutant lifting in several variables. The first classification links the lifting problem to the contractivity of certain linear functionals. The second one transforms it into nonnegative real numbers via a distance formula. We also solve the Nevanlinna-Pick interpolation problem for bounded analytic functions on the polydisc. Along the way, we solve a perturbation problem for bounded analytic functions. Commutant lifting and interpolation on the polydisc solve two well-known problems in Hilbert function space theory.
\end{abstract}

%\today

\maketitle

\tableofcontents

\section{Introduction}\label{sec: 1}

Sarason's commutant lifting theorem \cite{Sarason} is fundamental, with significant applications to virtually every aspect of Hilbert function space theory. One of them is the Nevanlinna-Pick interpolation theorem on the open unit disc $\D = \{z \in \mathbb{C}: |z| < 1\}$, which we will quickly review before moving on to the lifting theorem. Given $m$ distinct points $\clz = \{z_1, \ldots, z_m\} \subset \D$ (interpolation nodes) and $m$ scalars $\clw = \{w_1, \ldots, w_m\} \subset \D$ (target data), there exists an analytic function $\vp :\D \raro \mathbb{C}$ (interpolating function) such that
\[
\sup_{z \in \D} |\vp(z)| \leq 1
\]
and
\[
\vp(z_i) = w_i,
\]
for all $i=1, \ldots, m$, if and only if the $m \times m$ \textit{Pick matrix} $\mathfrak{P}_{\clz, \clw}$ is positive semi-definite, where
\[
\mathfrak{P}_{\clz, \clw}:= \Big(\frac{1 - w_i \bar{w}_j}{1 - z_i \bar{z}_j}\Big)_{i,j=1}^m.
\]
This was proved by G. Pick \cite{Pick} more than a century ago. R. Nevanlinna \cite{RN} independently solved the same problem at a very similar time. The methods of Pick and Nevanlinna are different, interesting on their own, and still relevant. For instance, Pick focused on interpolation on the upper half-plane, whereas the Schur algorithm (see I. Schur \cite{Schur 1, Schur 2}) served as the driving force behind Nevanlinna's strategy \cite{Gar, AN}.

After four decades of Pick's paper, D. Sarason \cite{Sarason} provided a robust Hilbert function space theoretical foundation for Nevanlinna and Pick's analytic and algebraic methods for the solution of the interpolation problem. Sarason's elegant result, known as the \textit{commutant lifting theorem}, represents the commutant of model operators in terms of nicer operators (say Toeplitz operators) without changing the norms. To be more specific, let us recall that the Hardy space $H^2(\T^n)$ on the polydisc $\D^n$ is the space of all analytic functions $f$ on $\D^n$ such that
\[
\|f\|_2:= \Big(\sup_{0<r<1} \int_{\T^n} |f(rz)|^2 d\mu(z)\Big)^{\frac{1}{2}} < \infty,
\]
where $d\mu$ denotes the normalized Lebesgue measure on $\T^n$, $z = (z_1, \ldots, z_n)$, and $rz =(rz_1, \ldots, rz_n)$. Note that $\T^n = \partial \D^n$ is the \v{S}ilov boundary of $\D^n$. We denote the von Neumann algebra of essentially bounded Lebesgue measurable functions on $\T^n$ by $L^\infty(\T^n)$. The analytic counterpart of $L^\infty(\T^n)$ is $H^\infty(\D^n)$, where
\[
H^\infty(\D^n) = \{\vp \in \clo(\D^n): \|\vp\|_\infty:= \sup_{z \in \D^n} |\vp(z)| < \infty\},
\]
the Banach algebra of bounded analytic functions on $\D^n$ equipped with the supremum norm $\|\cdot\|_\infty$. The space $H^\infty(\D^n)$ is known to be one of the most sophisticated commutative Banach algebras. Even at the level of a single variable, many questions in analytic function theory and harmonic analysis remain to be settled (cf. \cite{Bourgain, Bala} and the references therein). Clearly, $H^\infty(\D^n) \subseteq H^2(\T^n)$. The class of functions of interest is the closed unit ball of $H^\infty(\D^n)$:
\[
\cls(\D^n) = \{\vp \in H^\infty(\D^n): \|\vp\|_\infty \leq 1\}.
\]
The members of $\cls(\D^n)$ are known as \textit{Schur functions}. Given $\vp \in H^\infty(\D^n)$, the \textit{analytic Toeplitz operator} $T_\vp$ on $H^2(\T^n)$ is defined by
\[
T_\vp f = \vp f,
\]
for all $f \in H^2(\T^n)$. In particular, for $\vp = z_i$, we get $T_{z_i}$ the multiplication operator by coordinate function $z_i$ on $H^2(\T^n)$, $i=1, \ldots, n$. The following equality describes how the commutant of $\{T_{z_i}\}_{i=1}^n$ connects the Banach algebra $H^\infty(\D^n)$ to $\clb(H^2(\T^n))$:
\[
\{T_{z_1}, \ldots, T_{z_n}\}' = \{T_\vp: \vp \in H^\infty(\D^n)\}.
\]
Moreover, it follows that \cite{Deepak}
\[
\|T_\vp\| = \|\vp\|_\infty \qquad (\vp \in H^\infty(\D^n)).
\]

We now return to the classical case where $n=1$. Let $\clq$ be a $T_z^*$-invariant closed subspace of $H^2(\T)$, and let $X$ be a bounded linear operator on $\clq$ (in short, $X \in \clb(\clq)$). Sarason's commutant lifting theorem states the following: Suppose $X$ commutes with the \textit{model operator} $P_{\clq} T_z|_{\clq} \in \clb(\clq)$, that is
\[
X (P_{\clq} T_z|_{\clq}) = (P_{\clq} T_{z}|_{\clq}) X.
\]
Then there exists $\vp \in H^\infty(\D)$ such that
\[
X = P_{\clq} T_{\vp}|_{\clq},
\]
and
\[
\|X\| = \|\vp\|_\infty.
\]
Here (and in what follows) $P_{\clq}$ denotes the orthogonal projection from $H^2(\T)$ onto $\clq$. In other words, along with $\|X\| = \|T_\vp\|$, the following diagram commutes:
\[
\xymatrix{
H^2(\T) \ar@{->}[rr]^{\displaystyle T_{\vp}} \ar@{<-}[dd]_{\displaystyle i_{\clq}}
&& H^2(\T) \ar@{->}[dd]^{\displaystyle P_{\clq}}    \\ \\
\clq \ar@{->}[rr]_{\displaystyle X} && \clq
}
\]
where $i_\clq : \clq \hookrightarrow H^2(\T)$ denotes the inclusion map. The Nevanlinna-Pick interpolation theorem then easily follows from this applied to zero-based finite-dimensional $T^*_z$-invariant subspaces of $H^2(\T)$ (cf. Subsection \ref{subsect: weak interpolation}). The most important aspect of Sarason's lifting theorem, however, is the lifting of the commutant of model operators to the commutant of $T_z$ keeping the norms the same.

Clearly, Sarason’s commutant lifting theorem stands out as one of the most natural and fundamental results in the field. Its impact extends far beyond the classical interpolation problem, with wide-ranging applications to operator and function theory. Notable examples include the Carath\'{e}odory-Fej\'{e}r interpolation problem, the Nehari interpolation problem, the von Neumann inequality, isometric dilations, and the Ando dilation, among others. The list of applications also includes areas such as control theory and electrical engineering \cite{FFGK, Helton}. However, when one deals with several variables, analogous questions related to the commutant lifting theorem and its many single-variable applications introduce distinct challenges. There are not many well-established theories that address these problems (however, see \cite{AM Crelle, Ball et, Barik et al, Das-Sarkar, DMM, vinnikov}). In fact, it is known that Sarason's commutant lifting theorem does not hold true in general in several variables (see Section \ref{sec: examples of hom qm}). Understanding the obstacle of commutant lifting in several variables is thus one of the most important problems in Hilbert function space theory.

In this paper, we solve the commutant lifting problem on $H^2(\T^n)$, $n \geq 1$. That is, given a closed subspace $\clq \subseteq H^2(\T^n)$ that is invariant under $T_{z_i}^*$, $i=1, \ldots, n$, we classify contractions $X \in \clb(\clq)$ satisfying the condition that
\[
X (P_{\clq} T_{z_i}|_{\clq}) = (P_{\clq} T_{z_i}|_{\clq}) X \qquad (i=1, \ldots, n),
\]
so that the following diagram commutes
\[
\xymatrix{
H^2(\T^n) \ar@{.>}[rr]^{\displaystyle T_{\vp}} \ar@{<-}[dd]_{\displaystyle i_{\clq}}
&& H^2(\T^n) \ar@{->}[dd]^{\displaystyle P_{\clq}}    \\ \\
\clq \ar@{->}[rr]_{\displaystyle X} && \clq
}
\]
for some $\vp \in \cls(\D^n)$. Several attempts have been made to solve this problem, but they appear to be quite abstract and only applicable to a smaller class of operators (or functions). The most notable one is perhaps the work of Ball, Li, Timotin, and Trent \cite{Ball et} (also see Clark \cite{Clark}). The class of functions considered in \cite{Ball et} is the so-called Schur-Agler class functions. This class is significantly smaller than even the polydisc algebra when $n > 2$, and it is the same as the Schur class when $n = 2$. Even in the $n=2$ case, however, the existing results are abstract. In the context of interpolation for the $n=2$ case, we refer the reader to the seminal papers by Agler \cite{Agler 1, Agler} (also, see the discussion following Theorem \ref{Thm: prel recov Sara}).

Our approach and solution to the commutant lifting problem are both concrete and function-theoretic. As part of the application, we solve the interpolation problem for Schur functions on $\D^n$. Moreover, in the context of Schur functions on $\D^n$, we also solve a perturbation problem. Like our commutant lifting theorem, all results are concrete and quantify the complexity of the problem by nonnegative real numbers.

Now we provide a more thorough summary of this paper's key contribution. Unless otherwise specified, we will always assume that $n \geq 1$ is a natural number. Given a Hilbert space $\clh$, set
\[
\clb_1(\clh) = \{T \in \clb(\clh): \|T\| \leq 1\}.
\]
Given a nonempty subset $S\subseteq H^2(\T^n)$, we define the conjugate space $S^{conj}$ as
\[
S^{conj} = \{\bar{f}: f \in S\}.
\]
Let $\cls \subseteq H^2(\T^n)$ be a closed subspace. We say that $\cls$ is a \textit{shift invariant subspace} (or submodule) if
\[
z_i \cls \subseteq \cls,
\]
for all $i=1, \ldots, n$. We say that $\cls$ is a \textit{backward shift invariant subspace} (or quotient module) if $\cls^\perp$ is a shift invariant subspace, or equivalently,
\[
T_{z_i}^* \cls \subseteq \cls,
\]
for all $i=1, \ldots, n$. Given a backward shift invariant subspace $\clq \subseteq H^2(\T^n)$, we define the \textit{model operator} $S_{z_i}$, for each $i=1, \ldots, n$, by
\[
S_{z_i} = P_{\clq} T_{z_i}|_{\clq}.
\]
Now we define lifting on backward shift invariant subspaces (also see Definition \ref{def: lift}).

\begin{definition}
Let $\clq \subseteq H^2(\T^n)$ be a backward shift invariant subspace, $X \in \clb_1(\clq)$, and suppose $X S_{z_i} = S_{z_i} X$ for all $i=1, \ldots, n$. If there exists $\vp \in \cls(\D^n)$ such that
\[
X = P_\clq T_\vp|_{\clq},
\]
then $X$ is said to have a lift, or to be liftable.
\end{definition}

Before we get into the main contribution of this paper, we need to familiarise ourselves with a few additional concepts. First, we define the closed subspace of ``mixed functions'' of $L^2(\T^n)$ as
\[
\clm_n = L^2(\T^n) \ominus ({H^2(\T^n)}^{conj} + H^2(\T^n)).
\]
This space has a significant role to perform in the entire paper. It is crucial to observe that $\clm_n \cap H^2(\T^n) = \{0\}$, and
\[
\clm_1 = \{0\}.
\]
Let $\clq \subseteq H^2(\T^n)$ be a backward shift invariant subspace. Set
\begin{equation}\label{eqn: intro M Q}
\clm_\clq = \clq^{conj} \dotplus (\clm_n \dotplus H^2_0(\T^n)),
\end{equation}
where
\[
H^2_0(\T^n) = H^2(\T^n) \ominus \{1\},
\]
the closed subspace of $H^2(\T^n)$ of functions vanishing at the origin. Note that $\dotplus$ signifies the skew sum of Banach spaces. In what follows, we treat $\clm_\clq$ as a subspace of the classical Banach space $L^1(\T^n)$, and denote it by $(\clm_\clq, \|\cdot\|_1)$. In other words
\[
(\clm_\clq, \|\cdot\|_1) \subset (L^1(\T^n), \|\cdot\|_1).
\]
Let $X \in \clb(\clq)$, and let
\begin{equation}\label{eqn: intro psi}
\psi = X(P_{\clq} 1).
\end{equation}
It is important to observe that $P_\clq 1 \neq 0$; otherwise, $1 \in \clq^\perp$, which would imply (in view of the fact that $z_i \clq^\perp \subseteq \clq^\perp$ for all $i=1, \ldots, n$)
\[
\clq^\perp = H^2(\T^n),
\]
and consequently, $\clq = \{0\}$--a contradiction. Define a functional $X_\clq: (\clm_\clq, \|\cdot\|_1) \longrightarrow \mathbb{C}$ by
\[
X_\clq f = \int_{\T^n} \psi f \;d\mu \qquad (f \in \clm_\clq),
\]
where $d\mu$ denotes the normalized Lebesgue measure on $\T^n$. Finally, set
\[
\widetilde{\clm}_{\clq, X} = ({\clq}^{conj} \ominus \{\bar{\psi}\}) \dotplus (\clm_n \dotplus H^2_0(\T^n)),
\]
and again treat it as a subspace of $L^1(\T^n)$:
\[
(\widetilde{\clm}_{\clq, X}, \|\cdot\|_1) \subset (L^1(\T^n), \|\cdot\|_1).
\]
Now that we have these notations, we can say how the lifting of commutants in higher dimensions is classified (see Theorem \ref{thm: combine CLT}):

\begin{theorem}\label{intro: thm 1}
Let $\clq \subseteq H^2(\T^n)$ be a backward shift invariant subspace and let $X \in \clb(\clq)$ be a contraction. Suppose $X S_{z_i} = S_{z_i} X$ for all $i=1, \ldots, n$. The following conditions are equivalent:
\begin{enumerate}
\item $X$ admits a lift.
\item $X_\clq: (\clm_\clq, \|\cdot\|_1) \longrightarrow \mathbb{C}$ is a contractive functional, where
\[
X_\clq f = \int_{\T^n} \psi f \;d\mu \qquad (f \in \clm_\clq).
\]
\item $\text{dist}_{L^1(\T^n)}\Big(\frac{\bar \psi}{\|\psi \|_2^2}, \widetilde{\clm}_{\clq, X}\Big) \geq 1$.
\end{enumerate}
\end{theorem}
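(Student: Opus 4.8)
The plan is to establish the cycle of implications $(1) \Rightarrow (2) \Rightarrow (3) \Rightarrow (1)$, with the core analytic content living in the passage between the functional-theoretic condition $(2)$ and the distance formula $(3)$, which should be a direct application of the duality $L^\infty(\T^n)/{}^\perp\! Y \cong Y^*$ for a subspace $Y \subseteq L^1(\T^n)$. First I would unwind the hypothesis $X S_{z_i} = S_{z_i} X$ for all $i$: since $\clq$ is a backward shift invariant subspace generated (as a $T_{z_i}^*$-coinvariant space) by the orbit of $P_\clq 1$ under the $S_{z_i}^*$'s in many natural situations, and more to the point since the intertwining forces $X^* S_{z_i}^* = S_{z_i}^* X^*$, the operator $X$ is determined by the single vector $\psi = X(P_\clq 1)$ together with the commutation relations. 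The key reduction is that for $\vp \in \cls(\D^n)$, the equality $X = P_\clq T_\vp|_\clq$ holds if and only if $P_\clq(\vp) = \psi$, i.e. $\vp - \psi \perp \clq$, which says $\vp - \psi \in \clq^{conj,\perp} \cap H^2(\T^n)$; combined with $\vp \in H^2(\T^n)$ this pins down the coset of $\vp$ in $H^2(\T^n)/(\clq^{conj,\perp}\cap H^2)$. So liftability becomes: \emph{is there a function of sup-norm $\le 1$ in a prescribed affine subset of $H^2(\T^n)$ passing through the ``data'' $\psi$?}

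For $(1)\Rightarrow(2)$: if $X = P_\clq T_\vp|_\clq$ with $\|\vp\|_\infty \le 1$, then $\psi = P_\clq \vp$ differs from $\vp$ by an element of $\clq^\perp \supseteq$ the relevant complement, and for $f \in \clm_\clq = \clq^{conj} \dotplus (\clm_n \dotplus H^2_0(\T^n))$ one computes $X_\clq f = \int_{\T^n} \psi f \, d\mu = \int_{\T^n} \vp f \, d\mu$, the point being that $\psi - \vp$ pairs to zero against everything in $\clm_\clq$. Indeed $\clm_\clq$ is, by the decomposition \eqref{eqn: intro M Q} and the definition of $\clm_n$, essentially the annihilator (inside $L^1$) of $H^2(\T^n)$ intersected with things orthogonal to $\clq^{conj}$'s complement — so the integral $\int \vp f \, d\mu$ only sees the coset of $\vp$, not the representative. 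Then $|X_\clq f| = |\int \vp f \, d\mu| \le \|\vp\|_\infty \|f\|_1 \le \|f\|_1$, giving $\|X_\clq\| \le 1$. Conversely, for $(2)\Rightarrow(1)$, a contractive functional on $(\clm_\clq, \|\cdot\|_1) \subset L^1(\T^n)$ extends by Hahn–Banach to a norm-$\le 1$ functional on $L^1(\T^n)$, hence is given by integration against some $\vp_0 \in L^\infty(\T^n)$ with $\|\vp_0\|_\infty \le 1$; the condition that $\vp_0$ annihilates $\clm_n \dotplus H^2_0(\T^n)$ forces $\vp_0 \in H^2(\T^n)$ after discarding the part in $H^2(\T^n)^{conj} \ominus \{1\}$ (this is exactly why $\clm_n$ was subtracted off), i.e. $\vp_0 \in H^\infty(\D^n)$; and the condition relating it to $\clq^{conj}$ ensures $P_\clq \vp_0 = \psi$, so that $X$ and $P_\clq T_{\vp_0}|_\clq$ agree on the cyclic vector $P_\clq 1$ and, via the commutation with the $S_{z_i}$'s, on all of $\clq$. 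The one subtlety is matching the normalization of $\psi$: the reason $1$ is removed from $H^2$ (leaving $H^2_0$) and the reason for the $\ominus\{\bar\psi\}$ in $\tilde\clm_{\clq,X}$ is to handle the value $\int \psi \cdot 1 \, d\mu = \langle \psi, 1\rangle = \langle X P_\clq 1, 1 \rangle$, the ``value at the origin'' constraint.

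For $(2)\Leftrightarrow(3)$: this is the standard reformulation of ``a functional defined on a subspace $Y \subset L^1$ extends to a contraction on all of $L^1$'' as a distance condition. Write $\psi = \|\psi\|_2^2 \cdot \eta$ where $\eta = \bar\psi/\|\psi\|_2^2$ is normalized so that $\int \psi \bar\eta^{-?}\ldots$ — more precisely, the functional $X_\clq$ on $\clm_\clq$ is contractive iff for all $g \in L^\infty(\T^n)$ representing $X_\clq$ on $\tilde\clm_{\clq,X}$ (the part of $\clm_\clq$ obtained after splitting off the one-dimensional $\{\bar\psi\}$-direction in $\clq^{conj}$, which carries the value $\|\psi\|_2^2$), one has $\|g\|_\infty \ge 1$ forced, which via the Hahn–Banach/duality $(\text{quotient})^* = (\text{annihilator})$ translates to $\mathrm{dist}_{L^1}(\frac{\bar\psi}{\|\psi\|_2^2}, \tilde\clm_{\clq,X}) \ge 1$. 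Concretely: the affine space of all $\vp \in L^\infty$ inducing the same functional as $\psi$ on $\clm_\clq$ is $\psi + (\clm_\clq)^{\perp,L^\infty} = \psi + (\text{something})$; its minimal $L^\infty$-norm is, by the $L^1$-$L^\infty$ duality, exactly $\mathrm{dist}_{L^1}$ of a normalized functional to the predual annihilator, and chasing the bookkeeping of $\|\psi\|_2^2$ (coming from $\langle \psi, \psi\rangle$ when $\bar\psi \in \clq^{conj}$) gives the stated formula with $\frac{\bar\psi}{\|\psi\|_2^2}$ and $\tilde\clm_{\clq,X}$.

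The step I expect to be the main obstacle is the careful verification that the $L^\infty$-function $\vp_0$ produced by Hahn–Banach in $(2)\Rightarrow(1)$ is genuinely \emph{analytic} and genuinely \emph{lifts $X$}, not just matches one functional. Analyticity should follow cleanly from $\vp_0 \perp (\clm_n \dotplus H^2_0(\T^n))$ together with $\vp_0 \perp$ (the reflected part), using $\clm_n = L^2 \ominus (H^{2,conj} + H^2)$ — so $(\clm_n \dotplus H^2_0)^\perp \cap (H^{2,conj})^\perp$-type reasoning lands $\vp_0$ in $\mathbb{C} \oplus H^2_0 = H^2$, hence $\vp_0 \in H^2 \cap L^\infty = H^\infty$. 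The harder part is propagating the equality $X(P_\clq 1) = P_\clq(\vp_0)$ to $X = P_\clq T_{\vp_0}|_\clq$ on all of $\clq$: this requires knowing that $P_\clq 1$ together with the $S_{z_i}^*$ action generates $\clq$ (so that an operator commuting with all $S_{z_i}$ and agreeing on $P_\clq 1$ agrees everywhere), or alternatively a direct argument that for each monomial $z^\alpha$, $\langle X k, z^\alpha\rangle$-type quantities are forced. I would handle this by computing, for $k \in \clq$ and multi-index $\alpha$, the pairing $\langle Xk, \bar{k}'\rangle$ against reproducing-kernel-type vectors and reducing to the single known value via the intertwining $X S_{z_i} = S_{z_i} X$ and its adjoint; this is where the bulk of the technical care goes, and where I would lean most heavily on structural facts about backward shift invariant subspaces of $H^2(\T^n)$ established earlier in the paper.
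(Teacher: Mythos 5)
Your proposal follows essentially the same route as the paper: establish $(1)\Leftrightarrow(2)$ by (forward) observing that $\psi - \vp \in \clq^\perp$ annihilates $\clm_\clq$ under the $\int(\cdot)(\cdot)\,d\mu$ pairing so that $X_\clq = \chi_\vp|_{\clm_\clq}$, and (backward) by Hahn--Banach plus $(L^1)^* \cong L^\infty$, then testing against the monomials spanning $\clm_n \dotplus H^2_0(\T^n)$ to force analyticity and propagating $X(P_\clq 1) = P_\clq\vp_0$ to $X = S_{\vp_0}$ via $\clq = \overline{\mathrm{span}}\{P_\clq z^k : k \in \Z_+^n\}$ and the intertwining; and then establish $(1)\Leftrightarrow(3)$ by splitting $\clm_\clq = \mathbb{C}\bar\psi \dotplus \tilde{\clm}_{\clq,X}$ and noting that $X_\clq$ evaluates to $c\|\psi\|_2^2$ on $c\bar\psi + \tilde g$. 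The one step you flag as "the main obstacle" (agreement on $P_\clq 1$ implying agreement on $\clq$) is resolved in the paper exactly as you anticipate, and the small wobbles in your write-up --- the garbled normalization $\psi = \|\psi\|_2^2\,\eta$ with $\eta = \bar\psi/\|\psi\|_2^2$, and the loose phrase about "discarding" the anti-analytic part of $\vp_0$ where the orthogonality conditions actually force it to vanish outright --- do not change the substance.
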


This solves the long-standing commutant lifting problem for $H^2(\mathbb{T}^n)$, $n >1$. We believe that the technique used to prove our lifting theorem is interesting on its own. In fact, within the framework of the application to a different flavor, we solve a perturbation problem of independent interest: Given a nonzero function $f \in H^2(\T^n)$, does there exist $g \in H^2(\T^n)$ such that
\[
f + g \in \cls(\D^n)?
\]
Of course, to avoid triviality (that $g = - f$, for instance), we must assume that $g \in \{f\}^\perp$. Set
\[
\cll_n = \clm_n \oplus H^2_0(\T^n),
\]
and treat it as a subspace of $L^1(\T^n)$. Theorem \ref{thm: perturbation} presents a complete solution to this problem:

\begin{theorem}\label{intro: thm 2}
Let $f \in H^2(\T^n)$ be a nonzero function. Then there exists $g \in \{f\}^\perp$ such that
\[
f + g \in \cls(\D^n),
\]
if and only if
\[
\text{dist}_{L^1(\T^n)}\Big(\frac{\bar f}{\|f\|_2^2}, \cll_n \Big) \geq 1.
\]
\end{theorem}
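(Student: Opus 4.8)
The plan is to recognize this as a duality statement: the existence of a suitable perturbation $g$ is equivalent to a norm condition that, via the Hahn--Banach theorem, translates into a distance formula. First I would reformulate the problem. Writing $h = f + g$ with $g \in \{f\}^{\perp}$, the condition $h \in \cls(\D^n)$ means $h \in H^\infty(\D^n)$ with $\|h\|_\infty \le 1$, and the constraint $g \perp f$ says $\langle h, f \rangle_{H^2} = \langle f, f \rangle_{H^2} = \|f\|_2^2$. So the question becomes: does there exist $h \in \cls(\D^n)$ with $\langle h, f\rangle_{H^2} = \|f\|_2^2$? Equivalently, normalizing, does the functional $F \mapsto \langle F, \bar f / \|f\|_2^2 \rangle$ — interpreted suitably — attain the value $1$ on the unit ball $\cls(\D^n)$? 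Since the pairing $\langle h, f \rangle_{H^2} = \int_{\T^n} h \bar f \, d\mu$ only sees the ``analytic part'' of $\bar f$ against $h$, and $h \in H^\infty \subset H^2$, the natural dual object to consider is $L^1(\T^n)$ modulo the annihilator of $H^\infty(\D^n)$ inside $L^\infty(\T^n)$ (which is the standard $H^1$-duality setup, suitably adapted to the polydisc).

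Next I would identify $\cll_n = \clm_n \oplus H^2_0(\T^n)$ as (the relevant part of) the pre-annihilator. The point is that for $h \in \cls(\D^n)$ we have $\int_{\T^n} h \ell \, d\mu = 0$ for every $\ell \in \cll_n$: indeed $H^2_0(\T^n)^{conj}$ pairs to zero against $H^2(\T^n)$ by orthogonality of distinct Fourier modes, and $\clm_n$ — being orthogonal to both $H^2(\T^n)$ and its conjugate — also pairs to zero against the analytic function $h$. Hence for any $\ell \in \cll_n$,
\[
\Big\langle h, \frac{\bar f}{\|f\|_2^2} \Big\rangle_{H^2} = \int_{\T^n} h\Big(\frac{f}{\|f\|_2^2} + \ell\Big)\, d\mu,
\]
so $|\langle h, \bar f/\|f\|_2^2\rangle| \le \|h\|_\infty \, \|f/\|f\|_2^2 + \ell\|_1 \le \|f/\|f\|_2^2 + \ell\|_1$. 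Taking the infimum over $\ell \in \cll_n$ shows that the value $1$ can be achieved only if $\mathrm{dist}_{L^1(\T^n)}(\bar f/\|f\|_2^2, \cll_n) \ge 1$; this gives the easy (necessity) direction. (One must be slightly careful that $\bar f / \|f\|_2^2 + \cll_n$ is a genuine coset — i.e. $\bar f \notin \cll_n$ — but $\bar f$ has a nonzero component along the constant function $1$ unless $f \perp 1$; this edge case, and the general case, are handled by noting $\bar f \in \overline{H^2(\T^n)}$ which meets $\cll_n$ only in $\overline{H^2_0}$, so the relevant component is $\overline{f(0)}\cdot 1$, and if $f(0)=0$ one argues directly that the distance equals $\|f\|_2^{-2}\|\bar f\|_1$ which still must be $\ge 1$.)

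For the converse (sufficiency), assume $\mathrm{dist}_{L^1(\T^n)}(\bar f/\|f\|_2^2, \cll_n) \ge 1$. I would invoke a Hahn--Banach / duality argument: consider the quotient space $L^1(\T^n)/\overline{\cll_n}$ (or work directly with the coset of $\bar f/\|f\|_2^2$), whose dual is the annihilator $\cll_n^{\perp} \subseteq L^\infty(\T^n)$. The distance hypothesis says the coset of $\bar f/\|f\|_2^2$ has norm $\ge 1$ in the quotient, so by Hahn--Banach there is a norm-$\le 1$ functional, i.e. some $u \in \cll_n^\perp$ with $\|u\|_\infty \le 1$ and $\int_{\T^n} u \cdot (\bar f/\|f\|_2^2)\, d\mu \ge 1$ (with equality achievable, or one rescales). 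The crucial structural step is then to show $\cll_n^\perp \cap \{\|\cdot\|_\infty \le 1\} \subseteq \cls(\D^n)$ — equivalently, that $\cll_n^\perp \cap L^\infty(\T^n) = H^\infty(\D^n)$: a bounded measurable function annihilating all of $\clm_n$ and $H^2_0(\T^n)^{conj}$ must have all ``non-analytic'' Fourier coefficients vanishing, hence must lie in $H^\infty(\D^n)$. Granting this, set $h = u \in \cls(\D^n)$; then $\int h \bar f \, d\mu \ge \|f\|_2^2$, and since we also have the upper bound $\le \|f\|_2^2$ from $\|h\|_\infty \le 1$ and Cauchy--Schwarz (or rather from $|\int h\bar f| \le \|h\|_2\|f\|_2 \le \|f\|_2$... — here one must be careful, as this only gives $\|f\|_2$, not $\|f\|_2^2$; the correct bound uses the coset: $\int h\bar f \le \|f\|_2^2\,\mathrm{dist}$-type estimate is one-sided, so instead one arranges $\int h \bar f = \|f\|_2^2$ exactly by the extremal functional). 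Then $g := h - f$ satisfies $\langle g, f\rangle = \langle h,f\rangle - \|f\|_2^2 = 0$, so $g \in \{f\}^\perp$ and $f + g = h \in \cls(\D^n)$, as desired.

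I expect the main obstacle to be the precise Fourier-analytic identification $\cll_n^\perp \cap L^\infty(\T^n) = H^\infty(\D^n)$ on the polydisc — that is, verifying that orthogonality to $\clm_n$ (together with $\overline{H^2_0}$) exactly pins down analyticity in the $L^\infty$ setting. On the bidisc and higher this requires understanding the Fourier support of $\clm_n$ (functions supported on frequencies with at least one positive and one negative coordinate, roughly), and ensuring that the $L^1$--$L^\infty$ duality respects these decompositions; this is presumably where the hypothesis $\clm_1 = \{0\}$ (recovering the classical $n=1$ case) and the structural lemmas established earlier in the paper do the real work. A secondary technical point is keeping the normalization $\bar f/\|f\|_2^2$ consistent throughout and handling the degenerate coset issue when $f(0) = 0$; I would isolate that as a short lemma rather than thread it through the main argument.
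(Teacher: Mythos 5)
Your proposal follows the same Hahn--Banach duality strategy as the paper, and the key structural ingredients match: the identification of the $L^\infty$-annihilator of $\cll_n$ with $H^\infty(\D^n)$ via Fourier-support considerations, and the evaluation of the contraction $\chi_{f+g}$ on $\bar f/\|f\|_2^2 + \ell$ for the easy direction. The difference is organizational and concerns the one delicate point you flag but do not fully resolve. The paper's sufficiency argument defines a linear functional $\zeta_f$ directly on $\mathrm{span}\{\bar f\} + \cll_n$ by $\zeta_f(\lambda\bar f + h) = \lambda\|f\|_2^2$, verifies its contractivity from the distance hypothesis, and extends it by Hahn--Banach to $\chi_\vp$ with $\|\vp\|_\infty\le 1$. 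Because $\bar f$ is explicitly in the domain of $\zeta_f$, the extension automatically satisfies $\int_{\T^n}\vp\bar f\,d\mu=\|f\|_2^2$ on the nose, so writing $\vp = cf\oplus g$ with $g\perp f$ forces $c=1$ with no further work. Your quotient-space route ($L^1/\overline{\cll_n}$ with dual $\cll_n^\perp$) instead gives a norming functional $\Lambda$ for the coset, which initially produces only $\int u\,(\bar f/\|f\|_2^2)\,d\mu = \|[\bar f/\|f\|_2^2]\| \ge 1$; you need the short rescaling $\tilde\Lambda = \Lambda / \mathrm{dist}$ to land exactly at $1$ while keeping $\|\tilde\Lambda\|\le 1$, and you gesture at this (``one rescales'', ``arranges \ldots by the extremal functional'') without writing it down. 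That is not a gap in the idea, but it is a place where your write-up would need to be sharpened; the paper's formulation sidesteps the issue entirely by never passing to the quotient. Two smaller remarks: (i) in the necessity direction your inequality derivation occasionally swaps $f$ for $\bar f$ inside the $L^1$-norm, and in the annihilator computation you oscillate between the bilinear pairing $\int u\ell\,d\mu$ and the sesquilinear pairing $\langle u,\ell\rangle_{L^2}$ (which differ by a conjugate), so ``annihilates $\clm_n$ and $H^2_0(\T^n)^{conj}$'' is correct only after the conjugation has been made explicit; (ii) the worry about ``degenerate cosets'' when $f(0)=0$ is unnecessary --- the distance functional $\mathrm{dist}_{L^1}(\,\cdot\,,\cll_n)$ is defined for all of $L^1$ regardless of whether the argument happens to lie in $\cll_n$, and the argument never uses that $\bar f/\|f\|_2^2\notin\cll_n$.
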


Now we will explain the solution to the interpolation problem, which also resolves the long-standing question on interpolation with Schur functions as interpolating functions on $\D^n$, $n > 1$. We will start by laying the groundwork. Recall that $H^2(\T^n)$ is a reproducing kernel Hilbert space corresponding to the \textit{Szeg\"{o} kernel} $\mathbb{S}: \D^n \times \D^n \raro \mathbb{C}$, where
\[
\mathbb{S}(z,w) = \prod_{i=1}^n \frac{1}{1 - z_i \bar{w}_i} \qquad (z, w \in \D^n).
\]
For each $w \in \D^n$, define $\mathbb{S}(\cdot,w) : \D^n \raro \mathbb{C}$ by $(\mathbb{S}(\cdot,w))(z) = \mathbb{S}(z,w)$ for all $z \in \D^n$. In view of the standard reproducing kernel property, it follows that $\{\mathbb{S}(\cdot,w): w \in \D^n\} \subseteq H^2(\T^n)$ is a set of linearly independent functions, and
\[
\mathbb{S}(z,w) = \langle \mathbb{S}(\cdot,w), \mathbb{S}(\cdot,z) \rangle_{H^2(\T^n)},
\]
for all $z, w \in \D^n$. Given a set of distinct points $\clz = \{z_1, \ldots, z_m\} \subset \D^n$, we define an $m$-dimensional subspace of $H^2(\T^n)$ as
\[
\clq_\clz = \text{span} \{\mathbb{S}(\cdot, z_j): j =1, \ldots, m\}.
\]
It follows that $\clq_\clz$ is a backward shift invariant subspace of $H^2(\T^n)$. Define
\[
\clm_{\clq_\clz} = {\clq}^{conj}_{\clz} \dotplus (\clm_n \dotplus H^2_0(\T^n)).
\]
In addition, given a set of scalars $\{w_i\}_{i=1}^m \subset \D$, define $X_{\clz, \clw} \in \clb(\clq_\clz)$ by
\[
X_{\clz, \clw}^* \mathbb{S}(\cdot, z_j) = \bar{w}_j \mathbb{S}(\cdot, z_j) \qquad (j=1, \ldots,m).
\]
The fact that $X_{\clz, \clw}$ on $\clq_\clz$ is a natural operator and that it meets the crucial condition that $X_{\clz, \clw} S_{z_i} = S_{z_i} X_{\clz, \clw}$, $i=1, \ldots, m$, is noteworthy (see Lemma \ref{eqn: Y star S}).

Here is a summary of our main interpolation results (see Theorem \ref{thm: interpolation} and Theorem \ref{thm: quantitative inter}):

\begin{theorem}\label{Thm: prel inter 2}
Let $\clz = \{z_i\}_{i=1}^m \subset \D^n$ be $m$ distinct points, and let $\clw = \{w_i\}_{i=1}^m \subset \D$ be $m$ scalars. The following conditions are equivalent:

\begin{enumerate}
\item There exists $\vp \in \cls(\D^n)$ such that $\vp(z_i) = w_i$ for all $i=1, \ldots, m$.
\item  $\mi_{\clz, \clw}: (\clm_{\clq_\clz}, \|\cdot\|_1) \raro \mathbb{C}$ is a contraction, where
\[
\mi_{\clz, \clw} f = \int_{\T^n} \psi_{\clz,\clw} f \,d \mu,
\]
for all $f \in \clm_{\clq_\clz}$, and
\[
\psi_{\clz,\clw} = \sum_{i=1}^{m} c_i \bS(\cdot, z_i),
\]
and the scalar coefficients $\{c_i\}_{i=1}^m$ are given by
\[
\begin{bmatrix}
c_1
\\
c_2\\
\vdots
\\
c_m
\end{bmatrix}
=
\begin{bmatrix}
\bS(z_1, z_1) & \bS(z_1, z_2) & \cdots & \bS(z_1, z_m)
\\
\bS(z_2, z_1) & \bS(z_2, z_2) & \cdots & \bS(z_2, z_m)
\\
\vdots & \ddots & \ddots & \vdots
\\
\bS(z_m, z_1) & \bS(z_m, z_2) & \cdots & \bS(z_m, z_m)
\end{bmatrix}^{-1}
 \begin{bmatrix}
w_1
\\
w_2\\
\vdots
\\
w_m
\end{bmatrix}.
\]
\item Let $\psi := X_{\clz, \clw} (P_{\clq_\clz} 1)$, and let
\[
\widetilde{\clm}_{\clz, \clw} := ({\clq}^{conj}_{\clz} \ominus \{\bar{\psi}\}) \dotplus (\clm_n \dotplus H^2_0(\T^n)).
\]
Then
\[
\text{dist}_{L^1(\T^n)}\Big(\frac{\bar \psi}{\|\psi\|_2^2}, \widetilde{\clm}_{\clq_\clz}\Big) \geq 1.
\]
\end{enumerate}
\end{theorem}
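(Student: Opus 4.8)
The plan is to deduce this from the commutant lifting theorem, Theorem \ref{intro: thm 1}, specialized to the finite-dimensional backward shift invariant subspace $\clq_\clz$ and the operator $X_{\clz,\clw}$. Concretely, I would (i) identify the existence of a Schur interpolant with the liftability of $X_{\clz,\clw}$, (ii) check that $X_{\clz,\clw}$ satisfies the standing hypotheses of Theorem \ref{intro: thm 1}, (iii) compute $\psi := X_{\clz,\clw}(P_{\clq_\clz}1)$ explicitly, and (iv) invoke Theorem \ref{intro: thm 1}.

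For (i) and (ii), I would use the reproducing-kernel identity $T_\vp^*\bS(\cdot,w) = \overline{\vp(w)}\,\bS(\cdot,w)$, valid for every $\vp \in H^\infty(\D^n)$ and $w \in \D^n$. Since $\{\bS(\cdot,z_j)\}_{j=1}^m$ is a basis of $\clq_\clz$ and $T_\vp^*\bS(\cdot,z_j) \in \clq_\clz$, this gives $(P_{\clq_\clz}T_\vp|_{\clq_\clz})^*\bS(\cdot,z_j) = \overline{\vp(z_j)}\,\bS(\cdot,z_j)$, which, matched against the defining relation $X_{\clz,\clw}^*\bS(\cdot,z_j) = \bar w_j\bS(\cdot,z_j)$ (and using that these kernels span $\clq_\clz$), shows that $X_{\clz,\clw} = P_{\clq_\clz}T_\vp|_{\clq_\clz}$ if and only if $\vp(z_j) = w_j$ for every $j$. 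Combining this with Lemma \ref{eqn: Y star S} (which gives $X_{\clz,\clw}S_{z_i} = S_{z_i}X_{\clz,\clw}$) and with the estimate $\|P_{\clq_\clz}T_\vp|_{\clq_\clz}\| \le \|T_\vp\| = \|\vp\|_\infty \le 1$ for $\vp \in \cls(\D^n)$, one concludes that condition (1) holds if and only if $X_{\clz,\clw}$ is a contraction commuting with the model operators that admits a lift, i.e.\ precisely the situation governed by Theorem \ref{intro: thm 1}. The case in which $X_{\clz,\clw}$ fails to be a contraction should be disposed of separately: then (1) fails trivially, and one checks directly that (2) and (3) fail as well.

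For (iii), since $X_{\clz,\clw}$ leaves $\clq_\clz$ invariant, write $\psi = \sum_{i=1}^m c_i\bS(\cdot,z_i)$. Pairing with $\bS(\cdot,z_j)$, moving $X_{\clz,\clw}$ onto its adjoint, and using $\langle P_{\clq_\clz}1,\bS(\cdot,z_j)\rangle = \langle 1,\bS(\cdot,z_j)\rangle = 1$ together with $X_{\clz,\clw}^*\bS(\cdot,z_j) = \bar w_j\bS(\cdot,z_j)$ gives
\[
\sum_{i=1}^m c_i\,\bS(z_j,z_i) = \langle \psi,\bS(\cdot,z_j)\rangle = w_j \qquad (j=1,\ldots,m),
\]
which is exactly the linear system defining the coefficients $\{c_i\}$ in the statement; note that its coefficient matrix is the Gram matrix of the linearly independent kernels $\bS(\cdot,z_j)$, hence invertible. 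Thus $\psi = \psi_{\clz,\clw}$, the functional $\mi_{\clz,\clw}$ is the functional $X_\clq$ of Theorem \ref{intro: thm 1} for $\clq = \clq_\clz$, $X = X_{\clz,\clw}$, and $\clm_{\clq_\clz}$, $\tilde{\clm}_{\clz,\clw}$ are the corresponding instances of $\clm_\clq$, $\tilde{\clm}_{\clq,X}$ (so the $\tilde{\clm}_{\clq_\clz}$ in (3) should read $\tilde{\clm}_{\clz,\clw}$). Applying Theorem \ref{intro: thm 1} to $X_{\clz,\clw}$ then delivers the equivalence (1) $\Leftrightarrow$ (2) $\Leftrightarrow$ (3).

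I expect no serious obstacle: the whole argument is a bookkeeping reduction to Theorem \ref{intro: thm 1}. The one concrete computation is the evaluation of $\psi$ as $\psi_{\clz,\clw}$ by inverting the Szeg\"o Gram matrix, and the only subtlety is verifying that $X_{\clz,\clw}$ genuinely meets the hypotheses of Theorem \ref{intro: thm 1} — in particular handling the borderline case in which $X_{\clz,\clw}$ is not a contraction.
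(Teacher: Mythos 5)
Your proposal is correct and follows essentially the same route as the paper: the equivalence is obtained by reducing to Theorem \ref{intro: thm 1} (= Theorem \ref{thm: combine CLT}) applied to $\clq_\clz$ and $X_{\clz,\clw}$, with the pivotal observation that $\vp \in \cls(\D^n)$ satisfies $\vp(z_i)=w_i$ for all $i$ if and only if $S_\vp = X_{\clz,\clw}$, and with the coefficients $\{c_i\}$ recovered from the Szeg\H{o} Gram matrix (the paper carries this out across Theorems \ref{thm: interpolation} and \ref{thm: quantitative inter}).

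The one methodological difference is minor: to compute $\psi_{\clz,\clw}$, the paper first invokes Corollary \ref{cor: lift without 1} to get $X_{\clz,\clw} = S_{\psi_{\clz,\clw}}$, reads off $\psi_{\clz,\clw}(z_j) = w_j$ from $S_{\psi_{\clz,\clw}}^* \bS(\cdot,z_j) = \overline{\psi_{\clz,\clw}(z_j)}\bS(\cdot,z_j)$, and then applies the reproducing property; you instead pair $\psi = X_{\clz,\clw}(P_{\clq_\clz}1)$ directly against $\bS(\cdot,z_j)$ and move $X_{\clz,\clw}$ to the adjoint. Both routes give the same linear system. On the borderline case you flag (where $X_{\clz,\clw}$ fails to be a contraction): your instinct is sound, but ``one checks directly that (2) and (3) fail'' is vague; the clean way to dispose of it --- and the one the paper tacitly relies on --- is that the Hahn--Banach/duality argument in the reverse direction of Theorem \ref{thm: new CLT 1} (and the reduction of (3) to (2) in Theorem \ref{thm: new CLT 2}) never uses the a priori contractivity of $X$, so condition (2) (respectively (3)) already produces a lift $\vp \in \cls(\D^n)$ with $X_{\clz,\clw}=S_\vp$, forcing $\|X_{\clz,\clw}\|\le 1$ a posteriori. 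With that clarification your argument is complete.
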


Note that the matrix in part (2) of the above theorem is the inverse of the Gram matrix
\[
\Big(\bS(z_i, z_j)\Big)_{i,j=1}^m,
\]
corresponding to the $m$ Szeg\"{o} kernel functions $\{\bS(\cdot, z_i)\}_{i=1}^m$. Also, observe that part (3) provides a useful quantitative criterion to check interpolation on the polydisc. Indeed, as shown in Theorem \ref{thm: eg interpolation}, the quantitative criterion yields examples of interpolation on $\D^n$, $n \geq 2$. Notable is the fact that interpolating functions in this particular case are polynomials.

It is noteworthy that the answer to natural questions, as in Theorems \ref{intro: thm 1}, \ref{intro: thm 2}, and \ref{Thm: prel inter 2}, has a connection with a distance formula. This is a common and classical occurrence. The classical Nehari theorem \cite{N}, for example, establishes a direct link with such a distance function. Another instance is the celebrated Adamyan-Arov-Krein formulae \cite{AAK, AAK 2, AAK1}. We will comment some more at the end of Section \ref{sec: perturbation}.

In Theorem \ref{thm: recov sarason}, we recover Sarason's lifting theorem as an application to Theorem \ref{intro: thm 1}, resulting in yet another proof of the classical lifting theorem:

\begin{theorem}\label{Thm: prel recov Sara}
Let $\clq \subseteq H^2(\T)$ be a backward shift invariant subspace, and let $X \in \clb_1(\clq)$. If
\[
X S_{z} = S_{z} X,
\]
then $X$ is liftable.
\end{theorem}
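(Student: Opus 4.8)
The plan is to establish condition~(2) of Theorem~\ref{intro: thm 1} and then invoke the implication $(2)\Rightarrow(1)$. So, with $\psi=X(P_\clq 1)$, it suffices to show that $X_\clq\colon f\mapsto\int_{\T}\psi f\,d\mu$ is a contractive functional on $(\clm_\clq,\|\cdot\|_1)$. Here the first simplification is purely formal: since $n=1$ we have $\clm_1=\{0\}$, hence $\clm_\clq=\clq^{conj}\dotplus H^2_0(\T)$. Writing a typical $f\in\clm_\clq$ as $f=\bar g+h$ with $g\in\clq$ and $h\in H^2_0(\T)$, the product $\psi h$ lies in $H^1_0(\T)$ and so has vanishing integral, giving $X_\clq f=\langle\psi,g\rangle_{H^2(\T)}$. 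Thus it is enough to produce, for each $\varepsilon>0$, a function $\vp\in H^\infty(\D)$ with $\|\vp\|_\infty\le 1+\varepsilon$ and $P_\clq\vp=\psi$: for then $\langle\psi,g\rangle=\langle\vp,g\rangle=\int_\T\vp f\,d\mu$ (again because $\vp h\in H^1_0(\T)$), so that $|X_\clq f|\le(1+\varepsilon)\|f\|_1$, and letting $\varepsilon\downarrow 0$ yields contractivity.

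Constructing such a $\vp$ is the substance of the argument, and it is here that the one-variable structure enters. By Beurling's theorem $\clq^\perp=\theta H^2(\T)$ for an inner function $\theta$ (the case $\clq=H^2(\T)$ being elementary: then $X$ commutes with $T_z$, so $X=T_{X1}$ with $X1\in\cls(\D)$). Since $X$ commutes with $p(S_z)=P_\clq T_p|_\clq$ for every polynomial $p$, evaluating $Xp(S_z)=p(S_z)X$ at $P_\clq 1$ and using $p(S_z)(P_\clq 1)=P_\clq p$, $p(S_z)\psi=P_\clq(\psi p)$ gives $X(P_\clq p)=P_\clq(\psi p)$. Expanding $P_\clq=I-P_{\theta H^2(\T)}$ and using $|\theta|=1$ a.e.\ on $\T$ turns this into $X(P_\clq p)=\theta\,P_-\!\big(\bar\theta\psi p\big)$, where $P_-$ is the orthogonal projection of $L^2(\T)$ onto $\overline{H^2_0(\T)}$. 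In other words, $X$ is, up to the unimodular factor $\theta$, the compression to $\clq$ of the Hankel operator $H_{\bar\theta\psi}\colon v\mapsto P_-(\bar\theta\psi v)$.

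The decisive observation---and the step I expect to be the main obstacle to make fully rigorous---is that $H_{\bar\theta\psi}$ annihilates $\clq^\perp=\theta H^2(\T)$: for $q\in H^2(\T)$ one has $H_{\bar\theta\psi}(\theta q)=P_-(\psi q)=0$, simply because $\psi q\in H^1(\T)$ has no strictly negative Fourier coefficients. Consequently $\|H_{\bar\theta\psi}\|=\|X\|\le 1$, and Nehari's theorem \cite{N} then gives $\text{dist}_{L^\infty(\T)}(\psi,\theta H^\infty(\D))=\text{dist}_{L^\infty(\T)}(\bar\theta\psi,H^\infty(\D))=\|H_{\bar\theta\psi}\|\le 1$. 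It is precisely this cancellation---which has no counterpart once $\clm_n\neq\{0\}$---whose failure obstructs commutant lifting for $n\ge 2$, so the recovery here is in a sense a quantitative explanation of why Sarason's theorem is a one-variable phenomenon.

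Finally I would harvest $\vp$: given $\varepsilon>0$, choose $h_\varepsilon\in H^\infty(\D)$ with $\|\psi-\theta h_\varepsilon\|_\infty<1+\varepsilon$ and set $\vp_\varepsilon:=\psi-\theta h_\varepsilon$. Then $\vp_\varepsilon\in H^2(\T)\cap L^\infty(\T)=H^\infty(\D)$ (in particular $\psi$ is itself bounded), $\|\vp_\varepsilon\|_\infty<1+\varepsilon$, and $\psi-\vp_\varepsilon=\theta h_\varepsilon\in\theta H^\infty(\D)\subseteq\clq^\perp$, whence $P_\clq\vp_\varepsilon=\psi$. This $\vp_\varepsilon$ is the function called for in the first paragraph, so $X_\clq$ is contractive and Theorem~\ref{intro: thm 1} delivers the lift. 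Beyond the Hankel cancellation above, the only ingredients are Beurling's theorem, the standard formula for $P_{\theta H^2(\T)}$, the elementary fact that a product of two functions in $H^2(\T)$ is analytic, and Nehari's theorem; everything else is routine bookkeeping.
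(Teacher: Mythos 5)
Your overall strategy is sound and genuinely different from the paper's. Both arguments verify condition~(2) of the lifting theorem (i.e., contractivity of $X_\clq$), but the paper does so by inner-outer factorization of a polynomial symbol, an approximation argument using density of polynomials, and the auxiliary result that $P_{\clq_\theta}$ maps polynomials into $H^\infty$. You instead recognize $X$ (after the unitary $\theta$-twist) as the compression to $\clq$ of the Hankel operator $H_{\bar\theta\psi}$, exploit the cancellation $H_{\bar\theta\psi}|_{\theta H^2(\T)}=0$ (which hinges precisely on $\clm_1=\{0\}$), and finish with Nehari's theorem. This is a clean and arguably shorter route, and your remark that the Hankel cancellation is the structural reason Sarason's theorem is a one-variable phenomenon is a good interpretive observation. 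The computations $X(P_\clq p)=P_\clq(\psi p)=\theta P_-(\bar\theta\psi p)$, the annihilation $H_{\bar\theta\psi}(\theta q)=P_-(\psi q)=0$, and the identity $\|H_{\bar\theta\psi}\|=\|X\|$ all check out.

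There is, however, a genuine gap in the final harvesting step. You write $\mathrm{dist}_{L^\infty(\T)}\bigl(\bar\theta\psi,H^\infty(\D)\bigr)\le 1$ and then pick $h_\varepsilon\in H^\infty(\D)$ with $\|\bar\theta\psi-h_\varepsilon\|_\infty<1+\varepsilon$. But $\psi=X(P_\clq 1)$ is only known to lie in $\clq\subset H^2(\T)$, so $\bar\theta\psi$ is a priori only in $L^2(\T)$; if $\psi\notin L^\infty(\T)$, then $\|\bar\theta\psi-h\|_\infty=\infty$ for \emph{every} $h\in H^\infty(\D)$ and the distance as you have written it is $+\infty$. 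Your parenthetical ``(in particular $\psi$ is itself bounded)'' is circular: it would follow from the existence of such an $h_\varepsilon$, which is what you are trying to establish. The fix is to invoke the form of Nehari's theorem appropriate for $L^2$-symbols with bounded Hankel operator: since $\|H_{\bar\theta\psi}\|\le 1$, there exists $g\in L^\infty(\T)$ with $\|g\|_\infty=\|H_{\bar\theta\psi}\|\le 1$ and $g-\bar\theta\psi\in H^2(\T)$ (i.e., $\hat g(n)=\widehat{\bar\theta\psi}(n)$ for $n<0$). Then set $\vp:=\theta g$. Writing $\vp=\psi+\theta\bigl(g-\bar\theta\psi\bigr)$ shows $\vp$ is analytic, and $\vp\in L^\infty(\T)$ gives $\vp\in H^\infty(\D)$ with $\|\vp\|_\infty\le 1$; moreover $\vp-\psi\in\theta H^2(\T)=\clq^\perp$ so $P_\clq\vp=\psi$. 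This produces the required lift directly (with no $\varepsilon$ needed), and with this repair your argument is complete. You are not entitled to conclude $\psi\in H^\infty(\D)$ along the way, and indeed you do not need to.
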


In the proof of the above theorem, $\clm_\clq$ (defined as in \eqref{eqn: intro M Q}) admits a more compact form, namely
\[
\clm_\clq = \overline{\vp}(z H^2(\T)),
\]
where $\vp \in H^\infty(\D)$ is an inner function (that is, $|\vp| =1$ on $\T$ a.e.) and $\clq = (\vp H^2(\T))^\perp$. Moreover, we employ all the standard one variable types of machinery like the Beurling theorem, inner-outer factorizations \cite{Beurling, Gar}, etc. On the one hand, this is to be expected given that Sarason uses similar tools for his lifting theorem. This, on the other hand, explains both the challenges associated with the commutant lifting theorem and the potential for extensions of relevant function theoretic results on the polydisc.

Here are some additional facts and thoughts regarding the commutant lifting and interpolation problems, as well as the context of our approach. In 1968, Sz.-Nagy and Foia\c{s} \cite{NF 68} generalized the Sarason lifting theorem to vector-valued Hardy spaces. In subsequent papers, many researchers presented a variety of alternative proofs of independent interest (cf. \cite{Arocena, DMP, Sarason 1}). However, the dilation theory (pioneered by Halmos \cite{Halmos} and advanced by Sz.-Nagy \cite{Nagy}) is the primary technique employed in all of these papers which is powerful enough to negate the heavy use of function theoretic tools. For different versions of the commutant lifting theorem and its applications, we refer to Bercovici, Foia\c{s} and Tannenbaum \cite{BFT}, and the monographs by Nikolski \cite{Nik book 1}, Sz.-Nagy and Foia\c{s} \cite{NF Book}, and Foia\c{s} and Frazho \cite{FF} (also see Nikolski and Volberg \cite{NV} and Seip \cite{KS}). We refer to \cite{JKM} for an interpolation theorem in the context of a family of positive semi-definite kernels defined on a set.

In several variables, the earlier approach to the lifting theorem also appears to be dilation theoretic or under the assumption of von Neumann inequality, where dilation theory and von Neumann inequality for commuting contractions are complex subjects in and of themselves.

On the other hand, if the solution to the interpolation problem on $\D^n$, $n \geq 1$, is sought in terms of the Pick matrix's positive semi-definiteness (see Subsection \ref{subsect: weak interpolation} for the notion of Pick matrices on $\D^n$), then the interpolation problem becomes equivalent to the commutant lifting theorem on finite-dimensional zero-based subspaces (cf. Proposition \ref{prop: Pick set and CLT}). Consequently, in one variable, thanks to Sarason, the commutant lifting property, the Pick positivity, and the solution to the interpolation problem appear to be inextricably linked. In several variables, since the commutant lifting property tends to be inconsistent (cf. Section \ref{sec: examples of hom qm}), it is necessary to decouple the positivity of the Pick matrix from the interpolation problem. In some ways, these observations seek a different perspective on the several variables interpolation problem, one that is not as similar to the classical case of positivity of the Pick matrix (nor even positivity of a family of Pick matrices as in \cite{Abr, CLW, DMM, Hamliton, JKM}). As a consequence, we approach the problem from a completely different angle: more along the function theoretic path pioneered by Sarason. The difficulty here, of course, is dealing with the greater intricacy of several complex variables as well as the lack of all standard one variable tools.

Finally, a few words about this paper's methodology. We heavily use the duality of classical Banach spaces, namely
\[
(L^1(\T^n))^* \cong L^\infty(\T^n).
\]
Other common tools used in this paper include the classical Hahn-Banach theorem, the geometry of Banach spaces, and the Hilbert function space theory.

The remainder of the paper is structured as follows. Section \ref{sec: prel} introduces some preliminary concepts. Section \ref{sec: examples of hom qm} outlines explicit examples of non-liftable maps. Section \ref{sec: CLT Classification} presents classifications of commutant lifting. Section \ref{sec: perturbation} solves the perturbation problem of $H^2(\T^n)$-functions in terms of Schur functions. Section \ref{sec: weak lift} presents the first classification of the interpolation on $\D^n$. A quantitative classification for interpolation is presented in Section \ref{sec: quant inter}. In the same section, by using the quantitative classification, we provide examples of interpolation on $\D^n$, $n\geq 2$. The commutant lifting theorem on $\D^n$ is tested in Section \ref{subsect: example verification} with some concrete examples. As an application to our main commutant lifting theorem, Section \ref{sec: recover Sarason} provides new proof for the classical lifting theorem. In Section \ref{sec: con remarks} we make some general observations such as the Carath\'{e}odory-Fej\'{e}r interpolation problem, weak interpolation, and decomposing a polynomial as a sum of bounded analytic functions. Section \ref{sec: final} concludes with some closing remarks and thoughts on some other known results.

The paper contains an abundance of examples and counterexamples, as well as numerous auxiliary results of independent interest in both one and several variables. This paper is nearly self-contained.

\section{Preliminaries}\label{sec: prel}

In this section, we will introduce some necessary Hilbert function space theoretic preliminaries. These include Hardy space, submodules, quotient modules, and a formal definition of lifting. We begin by looking at the Hardy space. We again remind the reader that throughout the paper, $n$ will denote a natural number, and (unless otherwise stated) we always assume that $n \geq 1$.

We denote as usual by $L^2(\T^n)$ the space of square-integrable functions on $\T^n$. Recall that $\T^n$ is the \v{S}ilov boundary of $\D^n$. The \textit{Hardy space} $H^2(\T^n)$ is the closed subspace of $L^2(\T^n)$ consisting of those functions whose Fourier coefficients vanish off $\Z_+^n$. More specifically, consider $f \in L^2(\T^n)$ with Fourier series representation
\[
f = \sum_{k \in \Z^n} a_{k} z^{k} \qquad (z \in \T^n),
\]
where $z^{k} = z_1^{k_1} \cdots z_n^{k_n}$ for all $k = (k_1, \ldots, k_n) \in \Z^n$. Then $f \in H^2(\T^n)$ if and only if $a_{k} = 0$ whenever at least one of the $k_j$, $j = 1, \ldots, n$, in $k = (k_1, \ldots, k_n)$ is negative. The usage of radial limits is another neat way to represent the Hardy space (see Rudin \cite{WR}). In other words, we will identify $H^2(\T^n)$ with $H^2(\D^n)$, the Hilbert space of analytic functions $f \in \clo(\D^n)$ such that
\begin{equation}\label{eqn: norm f in 2}
\|f\|_2 := \Big(\sup_{0<r<1} \int_{\T^n} |f(rz)|^2 d\mu(z)\Big)^{\frac{1}{2}} < \infty,
\end{equation}
where $d\mu$ denotes the normalized Lebesgue measure on $\T^n$, and $rz = (rz_1, \ldots, r z_n)$. The identification is canonical, that is, given $f \in H^2(\D^n)$, the radial limit
\[
\tilde{f}(z) := \lim_{r \raro 1^{-}} f(rz),
\]
exists for almost every $z \in \T^n$, and $\tilde{f} \in H^2(\T^n)$, and vice-versa. In what follows (and unless otherwise stated) we will not distinguish between $f \in \clo(\D^n)$ satisfying \eqref{eqn: norm f in 2} and its radial limit representation $\tilde{f} \in H^2(\T^n)$. Therefore, we will not distinguish between $H^2(\T^n)$ and $H^2(\D^n)$ and will use the same notation $H^2(\T^n)$ for both.

It is frequently useful to represent $H^2(\T^n)$ as the Hilbert space of functions given by a power series in $z_1, \ldots, z_n$ with square-summable coefficients, that is
\[
H^2(\T^n) = \left\{\sum_{k \in \Z_+^n} a_{k} z^{k} \in \clo(\D^n): \sum_{k \in \Z_+^n} |a_{k}|^2 < \infty\right\}.
\]
The Hardy space $H^2(\T^n)$ is equipped with the tuple of multiplication operators by coordinate functions $\{z_1, \ldots, z_n\}$, which we denote by $(T_{z_1}, \ldots, T_{z_n})$. Therefore, by definition, we have
\[
(T_{z_i}f)(w) = w_i f(w),
\]
for all $f \in H^2(\T^n), w \in \D^n$, and $i=1, \ldots, n$. It is easy to see that $(T_{z_1}, \ldots, T_{z_n})$ is an $n$-tuple of commuting isometries, that is
\[
T_{z_i}^* T_{z_i} = I_{H^2(\T^n)}, \text{ and } T_{z_i} T_{z_j} = T_{z_j} T_{z_i},
\]
for all $i,j = 1, \ldots, n$. We will also need to use the doubly commutative property
\[
T_{z_i}^* T_{z_j} = T_{z_j} T_{z_i}^* \qquad (i \neq j).
\]
From the analytic function space perspective, recall that $H^2(\T^n)$ is a reproducing kernel Hilbert space corresponding to the \textit{Szeg\"{o} kernel} $\bS$ on $\D^n$, where
\[
\mathbb{S}(z,w) = \prod_{i=1}^n \frac{1}{1 - z_i \bar{w}_i} \qquad (z, w \in \D^n).
\]
For each $w \in \D^n$, the \textit{kernel function} $\mathbb{S}(\cdot, w) : \D^n \raro \mathbb{C}$ defined by
\[
(\mathbb{S}(\cdot, w))(z) = \mathbb{S}(z, w) \qquad (z \in \D^n),
\]
generates the joint eigenspace of the backward shifts, that is
\begin{equation}\label{eqn: eigen vec kernel}
\bigcap_{i=1}^n \ker (T_{z_i} - w_i I_{H^2(\T^n)})^* = \mathbb{C}\mathbb{S}(\cdot, w).
\end{equation}
The above equality essentially follows from the fact that
\begin{equation}\label{eqn: eigen vec kernel 2}
T_{z_i}^* \mathbb{S}(\cdot, w) = \bar{w}_i \mathbb{S}(\cdot, w),
\end{equation}
for all $w \in \D^n$ and $i=1, \ldots, n$, and
\[
\sum_{k \in \{0,1\}^n} (-1)^{|k|} T_z^{k} T_z^{*k} = P_{\mathbb{C}},
\]
where $P_{\mathbb{C}}$ is the orthogonal projection onto the space of constant functions, and $T_z^k = T_{z_1}^{k_1}\cdots T_{z_n}^{k_n}$ for all $k \in \{0,1\}^n \subset \Z_+^n$. Moreover, the set of kernel functions $\{\mathbb{S}(\cdot, w): w \in \D^n\}$ forms a total set in $H^2(\T^n)$ and satisfies the \textit{reproducing property}
\begin{equation}\label{eqn: rep prop}
f(w) = \Big\langle f, \mathbb{S}(\cdot, w) \Big\rangle_{H^2(\T^n)},
\end{equation}
for all $f \in H^2(\T^n)$ and $w \in \D^n$.

Recall from Section \ref{sec: 1} that a closed subspace $\clq \subseteq H^2(\T^n)$ is called a \textit{quotient module} if $T_{z_i}^* \clq \subseteq \clq$ for all $i=1, \ldots, n$. A closed subspace $\cls \subseteq H^2(\T^n)$ is called a \textit{submodule} if $z_i \cls \subseteq \cls$ for all $i=1, \ldots, n$. Equivalently, $\cls^\perp \cong H^2(\T^n)/\cls$ is a quotient module. In summary, we have the following identifications:
\[
\{\text{submodules}\} \longleftrightarrow \{\text{shift invariant subspaces}\},
\]
and
\[
\{\text{quotient modules}\} \longleftrightarrow \{\text{backward shift invariant subspaces}\}.
\]
The classical \textit{Laurent operator} $L_\vp$ with symbol $\vp \in L^\infty(\T^n)$ is the bounded linear operator on $L^2(\T^n)$ defined by
\[
L_\vp f = \vp f,
\]
for all $f \in L^2(\T^n)$. The corresponding \textit{Toeplitz operator} is the compression of $L_\vp$ to $H^2(\T^n)$, that is
\[
T_\vp f = P_{H^2(\T^n)} (\vp f),
\]
for all $f \in H^2(\T^n)$. As usual, $P_{H^2(\T^n)}$ denotes the orthogonal projection from $L^2(\T^n)$ onto $H^2(\T^n)$. Recall that (see \cite{Deepak})
\begin{equation}\label{eqn: Toeplitz norm equality}
\|T_\vp\|_{\clb(H^2(\T^n))} = \|L_\vp\|_{\clb(L^2(\T^n))} = \|\vp\|_\infty,
\end{equation}
for all $\vp \in L^\infty(\T^n)$. It is useful to point out that the Toeplitz operator with analytic symbol $\vp \in H^\infty(\D^n)$ is given by
\[
T_\vp = L_\vp|_{H^2(\T^n)}.
\]
This follows from the general fact that if $\cls$ is a submodule of $H^2(\T^n)$, then $\vp \cls \subseteq \cls$ for all $\vp \in H^\infty(\D^n)$. Finally, given a quotient module $\clq$ of $H^2(\T^n)$ and an analytic symbol $\varphi \in H^\infty(\D^n)$, we define the compression operator $S_\varphi$ on $\clq$ by
\[
S_\vp = P_{\clq} T_\vp|_{\clq}.
\]
In particular, for each $i=1, \ldots, n$, we have the compression of $T_{z_i}$ on $\clq$ as
\[
S_{z_i} = P_{\clq} T_{z_i}|_{\clq}.
\]
Clearly, $S_\vp S_{z_i} = S_{z_i} S_\vp$ for all $i=1, \ldots, n$. From this point of view, we also call that $S_\vp$ a module map. In general:

\begin{definition}\label{def: module map}
Let $\clq$ be a quotient module of $H^2(\T^n)$. An operator $X \in \clb(\clq)$ is said to be a module map if
\[
X S_{z_i} = S_{z_i} X \qquad (i=1, \ldots, n).
\]
\end{definition}

Another common name for module maps is \textit{truncated Toeplitz operators} (even with $L^\infty(\T^n)$-symbols). See Sarason \cite{Sarason 2} and also the classic by Brown and Halmos \cite{BH}.

We conclude this section with the definition of the central concept of this paper. Given a Hilbert space $\clh$, recall again that
\[
\clb_1(\clh) = \{T \in \clb(\clh): \|T\| \leq 1\}.
\]

\begin{definition}\label{def: lift}
Let $\clq \subseteq H^2(\T^n)$ be a quotient module and let $X \in \clb_1(\clq)$ be a module map. If there is a $\vp \in \cls(\D^n)$ such that
\[
X = S_\vp,
\]
then we say that $X$ has a lift, or $X$ is liftable, or $X$ admits a lift. We also say that $\vp$ is a lift of $X$.
\end{definition}

In the case of $n=1$, Sarason's result states that contractive module maps are always liftable. In the following section, we demonstrate that such a statement is no longer true whenever $n > 1$.

\section{Homogeneous quotient modules}\label{sec: examples of hom qm}

The purpose of this section is to outline explicit and basic examples of non-liftable module maps on quotient modules of $H^2(\T^n)$, $n > 1$. Our quotient modules are as simple as homogenous quotient modules and the module maps are compressions of homogeneous polynomials. We begin with a (probably known) classification of inner polynomials on $\D^n$. A function $\vp \in H^\infty(\D^n)$ is called \textit{inner} if $\vp$ is unimodular a.e. on $\T^n$ (in the sense of radial limits).

\begin{lemma}\label{lem: inner poly}
Let $p$ be a nonzero polynomial in $\mathbb{C}[z_1, \ldots, z_n]$. Then $p$ is inner if and only if
\[
p = \text{unimodular constant} \times \text{monomial}.
\]
\end{lemma}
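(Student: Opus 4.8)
The plan is to prove both implications, with the substance lying in the forward direction. The reverse direction is immediate: if $p = c\, z^k$ with $|c|=1$ and $k \in \Z_+^n$, then on $\T^n$ we have $|p(z)| = |c|\,|z_1|^{k_1}\cdots|z_n|^{k_n} = 1$ since each $|z_i|=1$, so $p$ is inner.

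For the forward direction, suppose $p \in \mathbb{C}[z_1,\ldots,z_n]$ is a nonzero inner polynomial, so $|p(z)| = 1$ for a.e.\ $z \in \T^n$. First I would reduce to the single-variable case by slicing. Fix $z_2, \ldots, z_n \in \T$; then $z_1 \mapsto p(z_1, z_2, \ldots, z_n)$ is a polynomial in $z_1$ that is unimodular for a.e.\ $z_1 \in \T$, hence (by continuity, since it is a polynomial) unimodular for \emph{all} $z_1 \in \T$. A one-variable polynomial that is unimodular on $\T$ must be a unimodular constant times $z_1^{d_1}$ for some $d_1 \ge 0$: indeed, writing $q(z_1) = \sum_{j} a_j z_1^j$ with $a_j$ depending on the frozen variables, the condition $q \bar q = 1$ on $\T$ forces all but one Fourier coefficient of $q$ (viewed as a trigonometric polynomial) to vanish, and the surviving one to have modulus $1$; equivalently, a finite Blaschke product that is a polynomial has all its zeros at the origin. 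So for each fixed $(z_2,\ldots,z_n) \in \T^{n-1}$, $p(\cdot, z_2,\ldots,z_n) = \lambda(z_2,\ldots,z_n)\, z_1^{d_1(z_2,\ldots,z_n)}$.

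Next I would argue that the exponent $d_1$ and the coefficient $\lambda$ are in fact independent of the frozen variables, and then iterate. The degree in $z_1$ of the restriction is bounded by $\deg_{z_1} p$, and more importantly, the coefficient of each power $z_1^j$ in $p$ is itself a polynomial in $z_2,\ldots,z_n$; the conclusion above says that for every $(z_2,\ldots,z_n) \in \T^{n-1}$, exactly one of these coefficient-polynomials is nonzero (with unit modulus) and the rest vanish. Since $\T^{n-1}$ is connected and these are finitely many polynomials, a continuity/identity-theorem argument shows the index $j = d_1$ of the nonvanishing coefficient is constant; hence $p(z) = z_1^{d_1} \, r(z_2,\ldots,z_n)$ where $r$ is the $z_1^{d_1}$-coefficient, a polynomial in the remaining variables which is unimodular for a.e.\ point of $\T^{n-1}$, i.e.\ inner in $n-1$ variables. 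Induction on $n$ (base case $n=1$ being the one-variable statement above) then gives $r = c\, z_2^{d_2}\cdots z_n^{d_n}$, and therefore $p = c\, z_1^{d_1} z_2^{d_2} \cdots z_n^{d_n}$, a unimodular constant times a monomial.

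I expect the main obstacle to be the bookkeeping in the inductive step: making rigorous the claim that ``exactly one coefficient-polynomial survives, with a constant index.'' The clean way is probably to note that if $p = \sum_{k} a_k z^k$ is inner, then $\sum_k |a_k|^2 = \int_{\T^n} |p|^2\, d\mu = 1$ (Parseval), while also $p$ inner implies $T_p$ is an isometry on $H^2(\T^n)$, so $T_p^* T_p = I$; testing against the constant function $1$ gives $\|p\|_2 = 1$, and testing the isometry condition on monomials, or directly computing $\int_{\T^n} |p|^2 = 1$ together with the fact that multiplication by $p$ preserves orthogonality of $\{z^m\}$, forces $p$ to be a single monomial of unit coefficient. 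This global Fourier-coefficient argument may actually be cleaner than the slicing induction and I would present it as the main line, using slicing only as motivation — but either route works, and the genuinely delicate point is ensuring one does not accidentally assume what is to be proved when handling several variables simultaneously.
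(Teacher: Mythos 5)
Your slicing argument is correct and takes a genuinely different route from the paper's. The paper works algebraically inside the polynomial: writing $p = \sum_{j=0}^{N_1} z_1^j p_j$ with $p_j \in \mathbb{C}[z_2,\ldots,z_n]$ and $p_{N_1}\neq 0$, it reads off the coefficient of $\bar z_1^{N_1}$ in the identity $p\bar p = 1$ on $\T^n$ to get $p_0\bar{p}_{N_1}=0$, hence $p_0 = 0$; iterating strips off the lower-order $z_1$-terms, giving $p = z_1^{N_1}p_{N_1}$, and then recursing on $p_{N_1}$ finishes. You instead freeze $(z_2,\ldots,z_n)\in\T^{n-1}$, invoke the one-variable classification on the slice, and globalize by observing that among the coefficient polynomials $p_j$ at most one can be not identically zero: any $p_j\not\equiv 0$ vanishes only on a measure-zero subset of $\T^{n-1}$, so if two of them were nonzero as polynomials they would both be nonzero at some common point, contradicting the slice result. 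That genericity argument, rather than connectivity of $\T^{n-1}$ alone, is the right mechanism for the "constant index" step you flagged. One correction to your remarks: the ``cleaner'' alternative you float --- Parseval plus the observation that multiplication by $p$ preserves orthogonality of $\{z^m\}$ --- is not yet a proof of monomiality, since \emph{every} inner function preserves orthogonality of that basis. To make the Fourier-side argument close, you must look at the extremal modes of $p\bar p$: taking the lexicographically largest and smallest exponents $k^*$ and $k_*$ carrying nonzero coefficients, the coefficient of $z^{k^*-k_*}$ in $p\bar p$ is $a_{k^*}\bar a_{k_*}$, which must vanish unless $k^*=k_*$. That extremal-coefficient extraction is precisely what the paper does one variable at a time.
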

\begin{proof}
We assume $n>1$ because the $n=1$ case is simpler and follows the same line of proof as the $n>1$ case. By definition, $p$ is inner if and only if $|p| = 1$ on $\T^n$. The sufficient part is now trivial. For the reverse direction, assume that $p$ is inner. If $p$ is a constant multiple of a monomial, then passing to the boundary value, the assertion will follow immediately. Therefore, assume that $p$ has more than one term. There exists $N_1 \in \mathbb{N}$ such that
\[
p = \sum_{j=0}^{N_1} z_1^j p_j,
\]
where $p_j \in \mathbb{C}[z_2, \ldots, z_n]$ for all $j = 0, 1, \ldots, N_1$, and
\[
p_{N_1} \neq 0.
\]
Here we are assuming without loss of generality that $p$ has a monomial term with $z_1$ as a factor (otherwise, we pass to the same but with respect to $z_2$ and so on). Since $p$ is inner, on $\T^n$, we have
\[
\begin{split}
1 & = p \bar{p}
\\
& = \bar{z}_1^{N_1} (p_0 \overline{p_{N_1}} + (\overline{p_{N_1-1}} p_0 +  p_1 \overline{p_{N_1}}) z_1 + \cdots + \overline{p_0}  p_{N_1} z_1^{2N_1}).
\end{split}
\]
This implies
\[
p_0 \overline{p_{N_1}} = 0,
\]
and hence $p_0 = 0$. Continuing exactly in the same way, we obtain that
\[
p = z_1^{N_1} p_{N_1},
\]
for some $p_{N_1} \in \mathbb{C}[z_2, \ldots, z_n]$. Applying the above recipe to $p_{N_1}$, we get $p_{N_1} = z_2^{N_2} p_{N_2}$ for some $N_2 \in \Z_+$ and $p_{N_2} \in \mathbb{C}[z_3, \ldots,, z_n]$. Hence
\[
p = z_1^{N_1} z_2^{N_2} p_{N_2}.
\]
Therefore, applying this method repeatedly, we finally deduce that $p$ is a unimodular constant multiple of  some monomial.
\end{proof}

Now we turn to the construction of the quotient modules of interest. As is well known and also evident from the definition of the Hardy space, polynomials are dense in $H^2(\T^n)$, that is
\[
H^2(\T^n) = \overline{\mathbb{C}[z_1, \ldots, z_n]}^{L^2(\T^n)}.
\]
Therefore, the standard grading on $\mathbb{C}[z_1, \ldots, z_n]$ induces a graded structure on $H^2(\T^n)$. We are essentially going to exploit this simple property in our construction of module maps. For each $t \in \Z_+$, denote by $H_t \subseteq \mathbb{C}[z_1, \ldots, z_n]$ the complex vector space of homogeneous polynomials of degree $t$. We have the vector space direct sum
\[
\mathbb{C}[z_1, \ldots, z_n] = \bigoplus_{t \in \Z_+} H_t.
\]
We consider from now on the finite-dimensional subspace $\clh_t$ as a closed subspace of $H^2(\T^n)$. Also, for each $m \in \mathbb{N}$, set
\[
\clq_m = \bigoplus_{t=0}^m H_t.
\]
Since $T_{z_i}^* \clq_m \subseteq \clq_m$ for all $m \geq 1$, it follows that $\clq_m \subseteq \mathbb{C}[z_1, \ldots, z_n]$ is a finite-dimensional quotient module of $H^2(\T^n)$, and $\text{deg} f \leq m$ for all $f \in \clq_m$. Fix $m \in \mathbb{N}$ and fix a homogeneous polynomial of degree $m$ as
\[
p = \sum_{|\bk| = m} a_{\bk} z^{\bk} \in H_m.
\]
Suppose that $\|p\|_2 = 1$. By the definition of the norm on $H^2(\T^n)$, we have
\[
\sum_{|\bk| = m} |a_{\bk}|^2 = 1.
\]\
We aim at investigating the lifting of the module map
\[
S_p = P_{\clq_m} T_p|_{\clq_m}.
\]
By $S_p f = P_{\clq_m} (pf)$, $f \in \clq_m$, we have on one hand $S_p 1 = p$, and on the other hand
\[
S_p f = 0,
\]
for all $f \in \clq_m$ such that $f(0) = 0$. Therefore, $\ker S_p = \clq_m \ominus \mathbb{C}$ or, equivalently
\[
\ker S_p = \bigoplus_{t=1}^m H_t.
\]
This allows us to conclude that
\begin{equation}\label{eqn: norm of Sp 1}
\|S_p\| = 1.
\end{equation}
We recall in passing that $\|T_\vp\|_{\clb(H^2(\T^n))} = \|\vp\|_\infty$ for all $\vp \in L^\infty(\T^n)$  (see \eqref{eqn: Toeplitz norm equality}).

\begin{theorem}
$S_p$ admits a lift if and only if $p$ is a unimodular constant multiple of a monomial.
\end{theorem}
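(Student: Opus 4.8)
The plan is to exploit the explicit description of $S_p$ (namely $S_p 1 = p$ and $S_p f = 0$ whenever $f(0)=0$) together with the normalisation $\|p\|_2 = 1$. The forward implication needs no work: if $p = c\, z^{\bk}$ with $|c| = 1$, then $\|p\|_\infty = 1$, so $p \in \cls(\D^n)$, and since $S_p = P_{\clq_m} T_p|_{\clq_m} = S_{\vp}$ with $\vp = p$, the module map $S_p$ is trivially liftable — it is its own lift.

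For the converse, suppose $S_p = S_{\vp}$ for some $\vp \in \cls(\D^n)$. Because $1 \in H_0 \subseteq \clq_m$, I would first evaluate both operators at the constant function $1$, which gives $p = S_p 1 = S_{\vp} 1 = P_{\clq_m}\vp$. Writing $\vp$ as the (orthogonal, in $H^2(\T^n)$) sum of its homogeneous components $\vp = \sum_{t \geq 0} \vp_t$ with $\vp_t \in H_t$, one has $P_{\clq_m}\vp = \sum_{t=0}^{m} \vp_t$; comparing with $p \in H_m$ forces $\vp_t = 0$ for $t < m$ and $\vp_m = p$. Hence $\vp - p \perp p$, and therefore
\[
\|\vp\|_2^2 \;=\; \|p\|_2^2 + \|\vp - p\|_2^2 \;=\; 1 + \|\vp - p\|_2^2 .
\]

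The crux is then the elementary inequality $\|h\|_2 \leq \|h\|_\infty$, valid for every $h \in H^\infty(\D^n)$ (immediate from the definition of $\|\cdot\|_2$ in \eqref{eqn: norm f in 2}). Applying it to $h = \vp$ and using $\|\vp\|_\infty \leq 1$, the displayed identity collapses to $1 + \|\vp - p\|_2^2 \leq 1$, so $\vp = p$ and, in particular, $\|p\|_\infty = \|p\|_2 = 1$. Since $|p| \leq \|p\|_\infty = 1$ a.e. on $\T^n$ while $\int_{\T^n} |p|^2\, d\mu = \|p\|_2^2 = 1$, we conclude $|p| = 1$ a.e. on $\T^n$, that is, $p$ is inner; Lemma \ref{lem: inner poly} then identifies $p$ as a unimodular constant times a monomial, completing the proof. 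The argument is short and entirely function-theoretic; the only point requiring care is the passage from the lifting/contractivity hypothesis to the rigid conclusion $\vp = p$, after which there is no real obstacle — in sharp contrast with the general liftability criterion of Theorem \ref{intro: thm 1}, no duality or distance estimate is needed in this special homogeneous case.
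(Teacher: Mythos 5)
Your proof is correct and follows essentially the same skeleton as the paper's: evaluate at $1$ to get $P_{\clq_m}\vp = p$, use $\|\vp\|_2 \le \|\vp\|_\infty \le 1$ together with the orthogonal decomposition to force $\vp = p$ and hence $\|p\|_\infty = \|p\|_2 = 1$, then conclude $p$ is inner and apply Lemma~\ref{lem: inner poly}. The one genuine divergence is in how you pass from $\|p\|_\infty = \|p\|_2 = 1$ to $p$ inner: the paper invokes the characterization that a norm-attaining Toeplitz operator with a unimodular-norm $H^\infty$ symbol has inner symbol (the cited \cite[Corollary~2.3]{Deepak}), applied to $T_p$ via $\|T_p 1\|_2 = \|p\|_2 = 1 = \|T_p\|$. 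You instead argue directly: since $p$ is a polynomial, hence continuous on $\overline{\D^n}$, $|p| \le \|p\|_\infty = 1$ pointwise on $\T^n$, and then $\int_{\T^n}(1-|p|^2)\,d\mu = 0$ forces $|p| = 1$ a.e. This is a self-contained, more elementary replacement that exploits the special form of $p$ (a polynomial), and is a genuine simplification of that step; the rest of the argument is the same, modulo the cosmetic difference that you identify $\psi := \vp - p$ through the homogeneous grading rather than writing $\vp = p \oplus \psi$ with $\psi \in \clq_m^\perp$ at once.
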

\begin{proof}
Suppose $S_p$ is liftable. There exists $\vp \in \cls(\D^n)$ such that $S_p = S_\vp$. Then
\[
S_p = S_\vp = P_{\clq_m} T_\vp|_{\clq_m},
\]
and
\[
\|\vp\|_\infty \leq 1.
\]
Note that $1 \in \clq_m$. Since $S_p 1 = p$, it is clear that $P_{\clq_m} \vp = p$, and hence there exists $\psi \in \clq_m^\perp$ such that
\[
\vp = p \oplus \psi \in \clq_m \oplus \clq_m^\perp.
\]
It is a well known general fact that $\|\vp\|_2 \leq \|\vp\|_\infty$. Indeed
\[
\begin{split}
\|\vp\|_2 & = \|T_\vp 1\|_2
\\
& \leq \|T_\vp\|_{\clb(H^2(\T^n))} \|1\|_2
\\
& = \|\vp\|_\infty.
\end{split}
\]
Now that $\|p\|_2 = 1$, we compute
\[
\begin{split}
1 + \|\psi\|_2^2 & = \|\vp\|^2_2
\\
& \leq \|\vp\|^2_\infty
\\
& \leq 1,
\end{split}
\]
which implies $\psi = 0$. Therefore
\[
\vp = p \in \clq_m.
\]
By using the same computation (or the standard norm equality) as above, we have
\[
1 = \|p\|_2  \leq \|p\|_\infty = \|\vp\|_\infty  = 1,
\]
which implies that $\|p\|_\infty = 1$. This combined with
\[
\|T_p 1\|_2 = \|p\|_2 = 1,
\]
imply that the Toeplitz operator $T_p$ is norm attaining. Consequently \cite[Corollary 2.3]{Deepak}, $p$ is inner (as $p \in H^\infty(\D^n)$). Then by Lemma \ref{lem: inner poly} we conclude  that $p$ is a unimodular constant multiple of a monomial. The converse is obvious.
\end{proof}

The following corollary is now straightforward. Here we need to assume that $n>1$.

\begin{corollary}\label{cor: hom poly fail}
Let $n > 1$. Let $p$ be a homogeneous polynomial of degree $m$ and assume that $\|p\|_2 = 1$. Suppose
\[
p = \sum_{|\bk| = m} a_{\bk} z^{\bk} \in H_m.
\]
If $a_{\bk}, a_{\bl} \neq 0$ for some pair of distinct $\bk, \bl \in \Z_+^n$, then $S_p$ on $\clq_m$ is not liftable.
\end{corollary}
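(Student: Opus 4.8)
The plan is to obtain this as an immediate consequence of the theorem established just above it, which characterises liftability of $S_p$ for a homogeneous $p$ with $\|p\|_2 = 1$: namely, $S_p$ admits a lift precisely when $p$ is a unimodular constant multiple of a monomial. Thus the entire content of the corollary is the observation that the coefficient hypothesis prevents $p$ from having that special shape.

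First I would fix the reading of the hypothesis. The condition ``$a_\bk, a_\bl \neq 0$ for some $\bk, \bl$'' is meant with $\bk \neq \bl$; for a single multi-index it would say nothing beyond $p \neq 0$, and when $n = 1$ there is only one multi-index of degree $m$, which is exactly where the standing assumption $n > 1$ enters -- it guarantees the hypothesis can actually be met. Under this reading $p$ carries two monomial terms $a_\bk z^\bk$ and $a_\bl z^\bl$ with $z^\bk \neq z^\bl$, so $p$ is not a scalar multiple of any single monomial.

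Next I would invoke the preceding theorem. Since $\|p\|_2 = 1$, its hypotheses are in force; recall also that this normalisation yields $\|S_p\| = 1$ by \eqref{eqn: norm of Sp 1}, so $S_p \in \clb_1(\clq_m)$ is a contractive module map and the liftability question is meaningful. Were $S_p$ liftable, the theorem would force $p$ to be a unimodular constant times a monomial, contradicting the previous paragraph. Hence $S_p$ on $\clq_m$ is not liftable.

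I do not anticipate any genuine obstacle here: the substantive work -- the classification of inner polynomials in Lemma \ref{lem: inner poly}, together with the norm-attainment argument (through \cite[Corollary 2.3]{Deepak}) showing that an inner lift of $S_p$ must coincide with $p$ -- is already carried out in the proof of the theorem above. The only point requiring a line of comment is the mild interpretation of the coefficient condition and the attendant remark on why $n > 1$ is assumed.
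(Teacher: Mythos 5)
Your argument is exactly the paper's intended one: the corollary is stated as an immediate consequence of the preceding theorem (liftability of $S_p$ forces $p$ to be a unimodular constant times a monomial), and your observation that the coefficient hypothesis with $\bk \neq \bl$ rules out that shape, together with the remark on why $n>1$ is needed for the hypothesis to be satisfiable, is precisely what the paper leaves implicit when it says the corollary is ``now straight.'' Nothing further is required.
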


The following fact was used to prove the above theorem \cite[Corollary 2.3.]{Deepak}: For $\vp \in H^\infty(\T^n)$ with $\|\vp\|_\infty = 1$, if the Toeplitz operator $T_\vp$ is norm attaining, then the symbol $\vp$ is inner. In this context, it is worth noting that the lift of an operator in the commutant is highly nonunique, and the issue of uniqueness of Sarason's commutant lifting theorem is inextricably linked to the norm attaintment property \cite[Section 5]{Sarason}.

Now we consider a simple class of quotient modules where all module maps admit lifting. Our idea is fairly elementary: embed one Hardy space into another Hardy space. Fix a natural number $m$ such that $1 < m < n$. Define
\[
\cls = \sum_{j=1}^{m} z_j H^2(\T^n).
\]
Then $\cls$ is a closed subspace \cite{Sarkar 2015}. As
\[
z_i \cls \subseteq \cls,
\]
for all $i=1, \ldots, n$, it follows that $\cls$ is a submodule of $H^2(\T^n)$ . Our interest is in the corresponding quotient module $\clq$, that is
\[
\clq:= \Big(\sum_{j=1}^{m} z_j H^2(\T^n)\Big)^\perp.
\]
A simple calculation reveals that
\[
\clq = \mathbb{C} \otimes H^2(\T^{n-m}),
\]
where $\mathbb{C}$ denotes the subspace of all constant functions in $H^2(\mathbb{T}^m)$. In other words, $\clq$ is simply the space of functions on $H^2(\T^n)$ that does not depend on the variables $\{z_1, \ldots, z_m\}$ (again, see \cite{Sarkar 2015}). Because $S_{z_i} = P_\clq T_{z_i}|_\clq$, we have
\[
S_{z_i} =
\begin{cases}
0 & \mbox{if } i=1, \ldots, m
\\
T_{z_i} & \mbox{if } i = m+1, \ldots, n.
\end{cases}
\]
Let $X \in \clb(\clq)$. Then, by a routine argument, $X$ is a module map, that is
\[
X P_{\clq} T_{z_i}|_{\clq} = P_{\clq} T_{z_i}|_{\clq} X,
\]
for all $i=1, \ldots, n$, if and only if there exists $\vp \in H^\infty(\D^n)$ such that $\vp$ does not depend on the variables $\{z_1, \ldots, z_m\}$ and
\[
X = T_\vp.
\]
This immediately implies the following result: Let $X \in \clb_1(\clq)$ be a module map. Then $X = T_\vp$ for some $\vp \in \cls(\D^n)$. In particular, $X$ lifts to $T_\vp$ itself.

In Section \ref{subsect: example verification}, we will show examples of module maps on nonhomogeneous quotient modules that cannot be lifted.

\section{Classifications of commutant lifting}\label{sec: CLT Classification}

Given the examples in the preceding section, it is clear that a module map on a quotient module of $H^2(\T^n)$, $n \geq 2$, may not admit a lift in general. In this section, we classify liftable module maps defined on quotient modules of $H^2(\T^n)$, $n \geq 1$. We begin with the well known duality of classical Banach spaces. Recall that $L^1(\T^n)$ is a Banach space predual of $L^\infty(\T^n)$. More specifically, we have
\[
(L^1(\T^n))^* \cong L^\infty(\T^n),
\]
via the isometrically isomorphic map $\chi: L^\infty(\T^n) \rightarrow (L^1(\T^n))^*$ defined by $\vp \in L^\infty(\T^n) \mapsto \chi_\vp \in (L^1(\T^n))^*$, where for each $\vp \in L^\infty(\T^n)$, $\chi_\vp \in (L^1(\T^n))^*$ is defined by
\begin{equation}\label{eqn: duality}
\chi_\vp f = \int_{\T^n} \vp f \; d\mu,
\end{equation}
for all $f \in L^1(\T^n)$. Moreover, we have the isometric property
\[
\|\chi_\vp\| = \|\vp\|_\infty,
\]
for all $\vp \in L^\infty(\T^n)$. For a nonempty $X \subseteq L^2(\T^n)$, we define
\[
X^{conj} = \{\bar{f}: f \in X\}.
\]
We also define the subspace of ``mixed functions'' of $L^2(\T^n)$ as
\[
\clm_n = L^2(\T^n) \ominus ({H^2(\T^n)}^{conj} + H^2(\T^n)).
\]
This is the closed subspace of $L^2(\T^n)$ generated by monomials that are neither analytic nor coanalytic. Let $I_n = \{1, \ldots, n\}$. Given $A \subseteq I_n$, we set
\[
|A| = \# A,
\]
the cardinality of $A$. The following easy-to-see equality explains the terminology of ``mixed functions'':
\begin{equation}\label{eqn: def of Mn}
\clm_n = \overline{\text{span}} \{z_{A}^{k_A} \bar{z}_B^{k_B}: A,B \subseteq I_n, A \cap B = \emptyset, A, B \neq \emptyset, k_A \in \Z_+^{|A|}, k_B \in \Z_+^{|B|}\},
\end{equation}
where for a nonempty subset $A = \{i_1, \ldots, i_m\} \subsetneqq I_n$ and $k_A = (k_1, \ldots, k_m) \in \Z_+^{|A|}$, we define the monomial
\[
z_{A}^{k_A} := z_{i_1}^{k_1} \cdots z_{i_m}^{k_m}.
\]
Note that $\clm_n$ is self-adjoint, that is
\begin{equation}\label{eqn: conj self adj}
{\clm}^{conj}_n = \clm_n.
\end{equation}
It is also crucial to observe that if $n=1$, then $\clm_n$ is trivial:
\begin{equation}\label{eqn: M1 = 0}
\clm_1 = \{0\}.
\end{equation}
Given a quotient module $\clq \subseteq H^2(\T^n)$, as per our convention, we have ${\clq}^{conj}:= \{\bar{f}: f \in \clq\}$, and hence $\clq^{conj}$ is a closed subspace of $L^2(\T^n)$ and
\[
\clq^{conj} \perp H^2_0(\T^n),
\]
where $H^2_0(\T^n) = H^2(\T^n) \ominus \{1\}$. It is easy to check (for instance, by using $\bS(\cdot, 0) \equiv 1$) that
\[
H^2_0(\T^n) = \{f \in H^2(\T^n): f(0) = 0\},
\]
the closed subspace of $H^2(\T^n)$ of functions vanishing at the origin. Finally, given a quotient module $\clq \subseteq H^2(\T^n)$, we set
\[
\clm_\clq = \clq^{conj} \dotplus (\clm_n \dotplus H^2_0(\T^n)).
\]
The skew sums in the above definition are in fact Hilbert space orthogonal direct sums in $L^2(\T^n)$. However, in what follows, we will represent $\clm_\clq$ as a linear subspace of the Banach space $L^1(\T^n)$, and denote it by
\[
(\clm_\clq, \|\cdot\|_1).
\]
We are now ready for our first lifting theorem.

\begin{theorem}\label{thm: new CLT 1}
Let $\clq \subseteq H^2(\T^n)$ be a quotient module, $X \in \clb_1(\clq)$ be a module map, and let $\psi = X(P_{\clq} 1)$. Define  $X_{\clq}: (\clm_\clq, \|\cdot\|_1) \longrightarrow \mathbb{C}$ by
\[
X_\clq f = \int_{\T^n} \psi f \;d\mu,
\]
for all $f \in \clm_\clq$. Then $X$ is liftable if and only if $X_\clq$ is a contractive functional on $(\clm_\clq, \|\cdot\|_1)$.
\end{theorem}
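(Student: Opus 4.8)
The plan is to recast the liftability of $X$ as a Hahn--Banach extension/norm-one functional problem, using the duality $(L^1(\T^n))^* \cong L^\infty(\T^n)$. First I would unpack what the lift condition says. If $\vp \in \cls(\D^n)$ is a lift, i.e.\ $X = S_\vp = P_\clq T_\vp|_\clq$, then applying both sides to $P_\clq 1$ and using the structure of $T_\vp$, one gets $\psi = X(P_\clq 1) = P_\clq(\vp \cdot 1) = P_\clq \vp$. Thus $\vp - \psi \perp \clq$ in $H^2(\T^n)$; combined with $\vp \in H^2(\T^n)$ this forces $\vp - \psi \in H^2(\T^n) \ominus \clq$. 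Conversely, I would check that a module map $X$ is liftable \emph{precisely} when there exists $\vp \in \cls(\D^n)$ with $P_\clq \vp = \psi$: the module-map hypothesis $X S_{z_i} = S_{z_i} X$ should guarantee that $P_\clq T_\vp|_\clq$ reproduces $X$ from the single datum $\psi$, because the cyclic-type structure of $\clq$ under the $S_{z_i}$'s (together with doubly commutativity) pins down a module map by its value on $P_\clq 1$. This reduction — ``$X$ liftable $\iff$ $\exists \vp \in \cls(\D^n)$ with $P_\clq \vp = \psi$'' — is the conceptual heart and I expect it to require a careful but not deep argument; it is likely isolated as a separate lemma earlier in the paper (the module-map rigidity).

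Granting the reduction, the statement becomes: there exists $\vp \in H^\infty(\D^n)$ with $\|\vp\|_\infty \le 1$ and $\vp - \psi \perp \clq$, if and only if the functional $f \mapsto \int_{\T^n} \psi f\, d\mu$ has norm $\le 1$ on $(\clm_\clq, \|\cdot\|_1)$. For the forward direction I would argue directly: assuming such a $\vp$ exists, for any $f \in \clm_\clq = \clq^{conj} \dotplus (\clm_n \dotplus H^2_0(\T^n))$, I claim $\int \psi f\, d\mu = \int \vp f\, d\mu$. Indeed $\psi - \vp \in H^2(\T^n) \ominus \clq$ is antiholomorphic-orthogonal to... more precisely, write $f = \bar g + h$ with $\bar g \in \clq^{conj}$, $g \in \clq$, and $h \in \clm_n \dotplus H^2_0(\T^n)$. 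Then $\int (\psi - \vp) \bar g\, d\mu = \langle \psi - \vp, g\rangle = 0$ since $\psi - \vp \perp \clq$; and $\int (\psi-\vp) h\, d\mu = 0$ because $\psi - \vp \in H^2(\T^n)$ while $h$ is orthogonal to $H^2(\T^n)$ (it lies in $\clm_n \oplus H^2_0(\T^n)$, and $\psi-\vp$, being in $H^2$, pairs trivially with... wait, $H^2_0 \subseteq H^2$, so one must be slightly more careful — $\int u\, \bar v\, d\mu$ vs $\int u v\, d\mu$; here the pairing is $\int \psi f \, d\mu$ \emph{without} conjugation, so $h \in H^2_0(\T^n)$ means $\psi h$ has no constant Fourier coefficient... this is exactly why $H^2_0$ and $\clm_n$ appear). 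So $\int \psi f \, d\mu = \int \vp f\, d\mu = \chi_\vp(f)$ with $\|\chi_\vp\| = \|\vp\|_\infty \le 1$, giving $|X_\clq f| \le \|f\|_1$.

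For the reverse direction, suppose $X_\clq$ is contractive on $(\clm_\clq, \|\cdot\|_1)$. By Hahn--Banach, extend it to a functional on all of $(L^1(\T^n), \|\cdot\|_1)$ of norm $\le 1$; by the duality there is $\Phi \in L^\infty(\T^n)$ with $\|\Phi\|_\infty \le 1$ and $\int_{\T^n} \Phi f\, d\mu = \int_{\T^n} \psi f \, d\mu$ for all $f \in \clm_\clq$. The vanishing of $\int(\Phi - \psi) f\, d\mu$ against $\clm_n$ and $H^2_0(\T^n)$ should force $\Phi - \psi$ to lie in $H^2(\T^n)$ (its Fourier coefficients off $\Z_+^n$ die against $\clm_n^{conj} = \clm_n$ and $(H^2_0)^{conj}$... again one tracks the no-conjugation pairing: $\int \Phi f\, d\mu$ picks out $\widehat{\Phi}(-k)$ paired with $\widehat f(k)$). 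Then write $\vp := \Phi$; being in $L^\infty$ with the right Fourier support it is in $H^\infty(\D^n)$, and $\|\vp\|_\infty = \|\Phi\|_\infty \le 1$, so $\vp \in \cls(\D^n)$. Finally the vanishing against $\clq^{conj}$ gives $\langle \vp - \psi, g \rangle = 0$ for all $g \in \clq$, i.e.\ $P_\clq \vp = P_\clq \psi = \psi$ (the last equality since $\psi = X(P_\clq 1) \in \clq$). By the reduction lemma, $X = S_\vp$, so $X$ is liftable.

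The main obstacle I anticipate is not the Hahn--Banach/duality mechanics, which is routine, but the \textbf{bookkeeping of the three pieces} $\clq^{conj}$, $\clm_n$, $H^2_0(\T^n)$ of $\clm_\clq$: one must verify that orthogonality of $\vp - \psi$ to $\clm_\clq$ \emph{in the bilinear pairing $\int uv\,d\mu$} (not the Hermitian inner product) is exactly equivalent to ``$\vp - \psi \in H^2(\T^n)$ and $\vp - \psi \perp \clq$.'' This is where the precise definitions of $\clm_n$ (monomials neither analytic nor coanalytic) and $H^2_0$ (analytic monomials of positive degree) are tailored so that the orthogonal complement of $\clq^{conj} \dotplus \clm_n \dotplus H^2_0$ under the conjugate-linear identification is precisely $\clq$. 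A secondary obstacle is justifying the reduction lemma that a contractive module map is liftable iff its only ``free parameter'' $\psi = X(P_\clq 1)$ can be realized as $P_\clq \vp$ for a Schur function $\vp$ — but this should be available from the module structure established in the Preliminaries, and I would cite it rather than reprove it.
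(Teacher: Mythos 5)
Your proposal matches the paper's proof in essentially every step: the forward direction compares $X_\clq$ with $\chi_\vp$ on the three summands of $\clm_\clq$, and the reverse direction uses Hahn--Banach and the $(L^1(\T^n))^*\cong L^\infty(\T^n)$ duality to produce an analytic $\vp$ with $\|\vp\|_\infty\le 1$ and $P_\clq\vp=\psi$, then recovers $X = S_\vp$ from the cyclicity of $P_\clq 1$ under the $S_z^k$. The one small mismatch is that the ``reduction lemma'' you hope to cite from the Preliminaries is not a separate statement in the paper but is carried out inline at the end of the proof of Theorem~\ref{thm: new CLT 1}, via $S_\vp (P_\clq z^{\bk}) = S_z^{\bk} S_\vp (P_\clq 1) = S_z^{\bk} X (P_\clq 1) = X (P_\clq z^{\bk})$ together with $\clq = \overline{\text{span}}\{P_\clq z^{\bk}: \bk\in\Z_+^n\}$.
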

\begin{proof}
Let $\vp \in \cls(\D^n)$ be a lift of $X$. Then $X = S_\vp$, where, by definition, $S_\vp = P_{\clq} T_\vp|_{\clq}$. Since $\vp \in \cls(\D^n)$ (that is, $\|\vp\|_\infty \leq 1$), it follows that the functional $\chi_\vp: L^1(\T^n) \raro \mathbb{C}$ defined by
\[
\chi_\vp(f) = \int_{\T^n} f \vp \,d\mu,
\]
for all $f \in L^1(\T^n)$, is a contraction (see \eqref{eqn: duality}). In view of the fact that $\vp \clq^\perp \subseteq \clq^\perp$ (as submodules are invariant under $H^\infty(\D^n)$), we have $P_\clq T_\vp P_\clq = P_\clq T_\vp$, and hence
\[
\begin{split}
S_\vp P_\clq 1 & = P_\clq T_\vp|_{\clq} P_\clq 1
\\
& = P_\clq T_\vp 1
\\
& = P_\clq \vp.
\end{split}
\]
Also, $X = S_\vp$ implies $\psi = S_\vp P_\clq 1$. This combined with $S_\vp P_\clq 1 = P_\clq \vp$ yields
\[
\psi = P_\clq \vp.
\]
We now prove that $X_\clq$ on $(\clm_\clq, \|\cdot\|_1)$ is a contractive functional by showing that
\[
X_\clq = \chi_\vp,
\]
on $\clm_\clq$. First we consider $X_\clq$ on ${\clq}^{conj} \subseteq \clm_\clq$. Let $\bar h \in {\clq}^{conj}$. Then $h \in \clq$ or, equivalently, $P_\clq h = h$, and we have
\[
\begin{split}
\int_{\T^n} \vp \bar{h} \, d\mu & = \langle \vp, h \rangle_{H^2(\T^n)}
\\
& = \langle \vp, P_\clq h \rangle_{H^2(\T^n)}
\\
& = \langle P_\clq \vp, h \rangle_{H^2(\T^n)}
\\
& = \langle \psi, h \rangle_{H^2(\T^n)}.
\end{split}
\]
Thus we conclude that
\[
\int_{\T^n} \psi \bar{h} \, d\mu  = \int_{\T^n} \vp \bar{h} \, d\mu,
\]
for all $\bar h \in {\clq}^{conj}$, equivalently
\[
X_\clq = \chi_\vp \text{ on } {\clq}^{conj}.
\]
Next, we consider $X_\clq$ on $\clm_n$. Since
\[
\clm_n \subseteq L^2(\T^n) \ominus (H^2(\T^n) + H^2(\T^n)^{conj}),
\]
functions in $\clm_n$  do not have an analytic part. Moreover, since $\clm_n$ is self-adjoint (see \eqref{eqn: conj self adj}), we have
\[
P_\clq {\clm}^{conj}_n = P_\clq \clm_n = \{0\}.
\]
By using the identity $\psi = P_\clq \vp$ and following the computation as in the previous case, for each $h \in \clm_n$, we have
\[
\begin{split}
\int_{\T^n} \psi h \, d\mu & = \langle \psi, \bar h \rangle_{L^2(\T^n)}
\\
& =  \langle P_\clq \vp, \bar h \rangle_{L^2(\T^n)}
\\
& =   \langle \vp, P_\clq \bar h \rangle_{H^2(\T^n)}
\\
& =  0,
\end{split}
\]
as $P_\clq h = P_\clq \bar h = 0$. This proves that
\[
X_\clq = \chi_\vp = 0 \text{ on } \clm_n.
\]
Finally, if $h \in H^2_0(\T^n)$, then $h(0) = 0$, and hence (as $\psi \in \clq \subseteq H^2(\T^n)$)
\[
\langle \bar{h}, \psi \rangle_{L^2(\T^n)} = 0.
\]
Therefore, again
\[
\begin{split}
\int_{T^n} \psi h \, d\mu & = \langle \psi, \bar{h} \rangle_{L^2(\T^n)}
\\
& = 0,
\end{split}
\]
as $\psi \in \clq \subset H^2(\T^n)$. This implies, again, that
\[
X_\clq = \chi_\vp = 0 \text{ on } H^2_0(\T^n).
\]
Thus we conclude that $X_\clq = \chi_\vp$ on $\clm_\clq$. On the other hand, $\chi_\vp: L^1(\T^n) \raro \mathbb{C}$ is a contraction. In particular, $\chi_\vp|_{\clm_\clq}$ is a contraction, which proves our claim that $X_\clq : \clm_\clq \rightarrow \mathbb{C}$ is contractive.

For the converse direction, assume that $X_\clq : (\clm_\clq, \|\cdot\|_1) \rightarrow \mathbb{C}$ is a contraction. By the Hahn-Banach theorem, there is a linear functional $\tilde{X}_\clq: L^1(\T^n) \longrightarrow \mathbb{C}$ such that
\[
\tilde{X}_\clq|_{\clm_\clq} = X_\clq,
\]
and
\[
\|\tilde{X}_\clq\| = \|X_\clq\| \leq 1.
\]
By the duality $(L^1(\T^n))^* \cong L^\infty(\T^n)$, as outlined in \eqref{eqn: duality}, there exists $\vp \in L^\infty(\T^n)$ such that
\[
\chi_\vp = \tilde{X}_\clq,
\]
and
\[
\|\vp\|_\infty \leq 1.
\]
In particular, $\chi_\vp|_{\clm_\clq} = \tilde{X}_\clq|_{\clm_\clq} = X_\clq$. Since
\[
\chi_\vp h = \int_{\T^n} \vp h \, d \mu,
\]
for all $h \in \clm_\clq$, it follows that
\begin{equation}\label{eqn: proof CLT 1}
\int_{\T^n} \vp h \, d\mu  = \int_{\T^n} \psi h \, d\mu,
\end{equation}
for all $h \in \clm_\clq$. We consider a typical monomial $f$ from $\clm_n \dotplus H^2_0(\T^n)$. In other words, we let
\[
f = z^k,
\]
for some $k\in \mathbb{N}^n$, or let
\[
f = z_A^{k_A} \bar{z}_B^{k_B},
\]
for some $k_A \in \Z_+^{|A|}$ and $k_B \in \Z_+^{|B|}$, where $A, B \subseteq \{1, \ldots, n\}$, $A \cap B = \emptyset$, and $A, B \neq \emptyset$ (see the definition of $\clm_n$ in \eqref{eqn: def of Mn}). As $\psi = X(P_{\clq} 1) \in \clq \subseteq \text{Hol}(\D^n)$, it follows that $\langle \psi, \bar{f} \rangle_{L^2(\T^n)} = 0$ and hence
\[
\int_{\T^n} \psi f \, d\mu = 0.
\]
Consequently, the identity in \eqref{eqn: proof CLT 1} yields
\[
\int_{\T^n} \vp z^{\bk} \, d\mu = 0,
\]
for all $k\in \mathbb{N}^n$, as well as
\[
\int_{\T^n} \vp z_A^{k_A} \bar{z}_B^{k_B} \, d\mu = 0,
\]
for all $k_A \in \Z_+^{|A|}$ and $k_B \in \Z_+^{|B|}$, where $A, B \subseteq \{1, \ldots, n\}$, $A \cap B = \emptyset$, and $A, B \neq \emptyset$. This implies $\vp$ is analytic, and hence $\vp \in \cls(\D^n)$. To complete the proof, it remains to show that $X = S_\vp$. Note, by \eqref{eqn: proof CLT 1} again, we have that
\[
\int_{\T^n} \psi \bar{h} \, d\mu = \int_{\T^n} \vp \bar{h} \, d\mu,
\]
for all $\bar h \in {\clq}^{conj}$. Equivalently, for each $\bar h \in {\clq}^{conj}$, we have
\[
\langle \vp, h\rangle_{L^2(\T^)} = \langle \psi, h\rangle_{L^2(\T^n)},
\]
and hence
\[
\langle P_\clq \vp, h \rangle_{H^2(\T^n)} = \langle \psi, h \rangle_{H^2(\T^n)},
\]
from which we conclude that
\[
P_\clq \vp = \psi.
\]
As before, we write $\vp \in \cls(\D^n) \subseteq H^2(\T^n)$ with respect to $\clq \oplus \clq^\perp = H^2(\T^n)$ as
\[
\vp = \psi \oplus \rho \in \clq \oplus \clq^\perp.
\]
Since $P_\clq \vp = P_\clq T_\vp P_\clq 1 = S_\vp (P_\clq 1)$ and $P_\clq \vp = \psi$, we have
\[
\psi = S_\vp (P_\clq 1).
\]
This combined with $\psi =  X(P_\clq 1)$ yields
\[
S_\vp (P_\clq 1) = X (P_\clq 1).
\]
Finally, let us fix $\bk \in \Z_+^n$ and observe
\[
\begin{split}
P_\clq z^{\bk} & = P_\clq z^{\bk} (P_\clq 1)
\\
& = S_z^{\bk} (P_\clq 1).
\end{split}
\]
Therefore, $S_\vp S_z^{\bk} = S_z^{\bk} S_\vp$ implies
\[
\begin{split}
S_\vp (P_\clq z^{\bk}) & = S_\vp S_z^{\bk} (P_\clq 1)
\\
& = S_z^{\bk} S_\vp (P_\clq 1)
\\
& = S_z^{\bk} X (P_\clq 1)
\\
& = X S_z^{\bk} (P_\clq 1)
\\
& = X (P_\clq z^{\bk}).
\end{split}
\]
Then, in view of the fact that $\clq = \overline{\text{span}} \{P_{\clq} z^{\bk}: \bk \in \Z_+^n\}$, the equality $X = S_\vp$ is immediate. This completes the proof of the theorem.
\end{proof}

The proof of the above theorem says more than what it states. In fact, we have the identity
\[
X_\clq|_{\clm_n \dotplus H^2_0(\T^n)} \equiv 0,
\]
and hence
\[
\ker X_\clq \supseteq \clm_n \dotplus H^2_0(\T^n).
\]
Another way to put it is that there is a contractive extension of $X_\clq|_{{\clq}^{conj}}$ to the entire $\clm_\clq$ that vanishes on the completely analytic and completely co-analytic parts.

\begin{remark}\label{rem: indept of X}
It is clear from the construction that the subspace $(\clm_{\clq}, \|\cdot\|_1)$ is independent of $X$.
\end{remark}

Our second lifting theorem is a consequence of the first, and it is in a more compact form. Given a quotient module $\clq \subseteq H^2(\T^n)$ and a module map $X \in \clb(\clq)$, we define a subspace of $L^1(\T^n)$ as
\[
\widetilde{\clm}_{\clq, X} = ({\clq}^{conj} \ominus \{\bar{\psi}\}) \dotplus (\clm_n \dotplus H^2_0(\T^n)),
\]
where
\[
\psi = X(P_\clq 1).
\]
Keep in mind, in contrast to Remark \ref{rem: indept of X}, that $(\widetilde{\clm}_{\clq, X}, \|\cdot\|_1)$ is dependent on $X$.

\begin{theorem}\label{thm: new CLT 2}
Let $\clq \subseteq H^2(\T^n)$ be a quotient module, $X \in \clb_1(\clq)$ be a module map, and let $\psi = X(P_\clq 1)$. Then $X$ is liftable if and only if
\[
\text{dist}_{L^1(\T^n)}\Big(\frac{\bar \psi}{\|\psi\|_2^2}, \widetilde{\clm}_{\clq, X}\Big) \geq 1.
\]
\end{theorem}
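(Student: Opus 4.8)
The plan is to deduce this from Theorem \ref{thm: new CLT 1} together with the elementary duality between the norm of a linear functional and the distance from a point to its kernel. Throughout I would assume $\psi \neq 0$ (when $\psi = 0$ the functional $X_\clq$ is identically zero, $X$ is liftable by Theorem \ref{thm: new CLT 1}, and the right-hand side is read with the obvious convention).

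First I would pin down the structure of $X_\clq$ on $(\clm_\clq, \|\cdot\|_1)$. Using the decomposition $\clm_\clq = \clq^{conj} \dotplus (\clm_n \dotplus H^2_0(\T^n))$ and the computations already performed in the proof of Theorem \ref{thm: new CLT 1} --- namely that $X_\clq \equiv 0$ on $\clm_n \dotplus H^2_0(\T^n)$ (the remark following that theorem), while $X_\clq \bar h = \langle \psi, h \rangle_{H^2(\T^n)}$ for $\bar h \in \clq^{conj}$ --- I get that for $f = \bar h + g$ with $\bar h \in \clq^{conj}$ and $g \in \clm_n \dotplus H^2_0(\T^n)$ one has $X_\clq f = \langle \psi, h \rangle_{H^2(\T^n)}$. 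Hence $X_\clq f = 0$ exactly when $h \perp \psi$, which gives the identification
\[
\ker X_\clq = (\clq^{conj} \ominus \{\bar\psi\}) \dotplus (\clm_n \dotplus H^2_0(\T^n)) = \tilde{\clm}_{\clq, X},
\]
and moreover $X_\clq \bar\psi = \int_{\T^n} |\psi|^2 \, d\mu = \|\psi\|_2^2 \neq 0$. (The only point needing a little care here is that $\ker X_\clq$ equals, and not merely contains, $\tilde{\clm}_{\clq,X}$; this uses the $L^2$-orthogonality of the three summands of $\clm_\clq$ and the analyticity of $\ps= X(P_\clq 1) \in \clq$, both available from Section \ref{sec: CLT Classification}.)

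Next I would invoke the standard fact that a nonzero linear functional $L$ on a normed space $V$, with $W = \ker L$ and $v_0 \in V$ such that $Lv_0 \neq 0$, is bounded iff $\text{dist}(v_0, W) > 0$, in which case $\|L\| = |Lv_0|/\text{dist}(v_0, W)$. Applying this with $V = (\clm_\clq, \|\cdot\|_1)$, $L = X_\clq$, $W = \tilde{\clm}_{\clq, X}$, $v_0 = \bar\psi$, I obtain
\[
\|X_\clq\| = \frac{\|\psi\|_2^2}{\text{dist}_{L^1(\T^n)}(\bar\psi, \tilde{\clm}_{\clq, X})}
\]
(with the value $+\infty$ when the denominator vanishes). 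Therefore $\|X_\clq\| \leq 1$ if and only if $\text{dist}_{L^1(\T^n)}(\bar\psi, \tilde{\clm}_{\clq, X}) \geq \|\psi\|_2^2$, which, since $\tilde{\clm}_{\clq, X}$ is a linear subspace (so the distance scales), is the same as $\text{dist}_{L^1(\T^n)}(\bar\psi/\|\psi\|_2^2, \tilde{\clm}_{\clq, X}) \geq 1$. Combining with Theorem \ref{thm: new CLT 1} ($X$ liftable $\iff$ $X_\clq$ contractive) completes the proof.

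If I preferred to keep the argument self-contained rather than cite the norm--distance fact, I would instead argue both implications by hand: for ($X$ liftable $\Rightarrow$ distance bound), from $\|X_\clq\| \leq 1$ and $\bar\psi - w \in \clm_\clq$, $X_\clq(\bar\psi - w) = \|\psi\|_2^2$ for every $w \in \tilde{\clm}_{\clq,X} \subseteq \ker X_\clq$, one gets $\|\psi\|_2^2 \leq \|\bar\psi - w\|_1$ and takes the infimum over $w$; for the converse, given $f \in \clm_\clq$ with $X_\clq f \neq 0$, the vector $w := \bar\psi - (\|\psi\|_2^2/X_\clq f)\, f$ lies in $\ker X_\clq = \tilde{\clm}_{\clq,X}$, and $\|f\|_1 = (|X_\clq f|/\|\psi\|_2^2)\,\|\bar\psi - w\|_1 \geq (|X_\clq f|/\|\psi\|_2^2)\,\text{dist}_{L^1(\T^n)}(\bar\psi, \tilde{\clm}_{\clq,X}) \geq |X_\clq f|$, so $X_\clq$ is contractive and Theorem \ref{thm: new CLT 1} applies. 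This is a soft argument with no genuine obstacle; the mildly delicate step is the exact identification of $\ker X_\clq$ and the bookkeeping of the degenerate case $\psi = 0$.
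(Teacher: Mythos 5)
Your proof is correct and follows essentially the same route as the paper, which likewise rests on the decomposition $\clm_\clq = \mathbb{C}\bar\psi \dotplus \tilde{\clm}_{\clq,X}$, the vanishing of $X_\clq$ on $\tilde{\clm}_{\clq,X}$, and the identity $X_\clq\bar\psi = \|\psi\|_2^2$ to translate contractivity of $X_\clq$ into the distance inequality. Your appeal to the general fact $\|L\| = |L v_0|/\text{dist}(v_0,\ker L)$ after identifying $\ker X_\clq = \tilde{\clm}_{\clq,X}$ is just a cleaner packaging of the paper's by-hand computation (your second, self-contained paragraph is the paper's argument almost verbatim), and your explicit handling of the degenerate case $\psi=0$ is a small tidy-up that the paper leaves implicit.
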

\begin{proof}
In view of $\bar{\psi} \in {\clq}^{conj}$, first we observe that
\[
\clm_\clq = \mathbb{C}\bar{\psi} \dotplus \widetilde{\clm}_{\clq, X}.
\]
Suppose $X$ is liftable. By Theorem \ref{thm: new CLT 1}, we have
\begin{equation}\label{eqn: proof 2nd lift 1}
\Big|\int_{\T^n} \psi f \, d\mu  \Big| \leq \|f\|_1 \qquad (f \in \clm_\clq).
\end{equation}
Pick $\tilde{g} \in \widetilde{\clm}_{\clq, X}$ and a scalar $c$. Define $g \in \clm_\clq$ by
\[
g = c \bar{\psi} + \tilde{g}.
\]
We compute
\[
\begin{split}
\int_{\T^n} \psi (c \bar{\psi} + \tilde{g}) \, d\mu & = c \int_{\T^n} \psi \bar{\psi} \, d\mu + \int_{\T^n} \psi \tilde{g} \, d\mu
\\
& = c \|\psi\|^2_2 + \langle \psi, \overline{\tilde{g}} \rangle
\\
& = c \|\psi\|^2_2,
\end{split}
\]
as
\[
\langle \psi, \overline{\tilde{g}} \rangle = 0,
\]
which follows from the fact that $\tilde{g} \perp \bar{\psi}$. Now \eqref{eqn: proof 2nd lift 1} implies
\[
\Big| \int_{\T^n} \psi (c \bar{\psi} + \tilde{g}) \, d\mu \Big| \leq \|c \bar{\psi} + \tilde{g}\|_1,
\]
and hence
\[
|c| \|\psi\|^2_2 \leq \|c \bar{\psi} + \tilde{g}\|_1,
\]
or equivalently
\[
\Big\|\frac{\bar{\psi}}{\|\psi\|^2_2} + \tilde{g}\Big\|_1 \geq 1,
\]
for all $\tilde{g} \in \widetilde{\clm}_{\clq, X}$, and completes the proof of the forward direction.

To prove the reverse direction, let the above inequality hold for all $\tilde{g} \in \widetilde{\clm}_{\clq, X}$. Equivalently
\[
\|\psi\|^2_2 \leq \|\bar{\psi} + \tilde{g}\|_1 \qquad (\tilde{g} \in \widetilde{\clm}_{\clq, X}).
\]
Fix $f \in \clm_\clq$, and write $f = c \bar{\psi} + \tilde{f}$ for some scalar $c$ and some function $\tilde{f} \in \widetilde{\clm}_{\clq, X}$. Following the proof of the forward direction, we have
\[
\begin{split}
c \|\psi\|^2_2 & = \int_{\T^n} \psi (c \bar{\psi} + \tilde{f}) \, d\mu
\\
& = \int_{\T^n} \psi f \, d\mu,
\end{split}
\]
which leads to \eqref{eqn: proof 2nd lift 1}. Theorem \ref{thm: new CLT 1} now completes the proof of the theorem.
\end{proof}

Combining Theorem \ref{thm: new CLT 1} and Theorem \ref{thm: new CLT 2}, we have the following:

\begin{theorem}\label{thm: combine CLT}
Let $\clq \subseteq H^2(\T^n)$ be a quotient module, and let $X \in \clb_1(\clq)$ be a module map. Set
\[
\psi = X(P_\clq 1),
\]
and
\[
\clm_\clq = {\clq}^{conj} \dotplus (\clm_n \dotplus H^2_0(\T^n)),
\]
and
\[
\widetilde{\clm}_{\clq, X} = ({\clq}^{conj} \ominus \{\bar{\psi}\}) \dotplus (\clm_n \dotplus H^2_0(\T^n)).
\]
Then the following conditions are equivalent:
\begin{enumerate}
\item $X$ is liftable.
\item $X_{\clq}: (\clm_\clq, \|\cdot\|_1) \longrightarrow \mathbb{C}$ is a contractive functional, where
\[
X_{\clq} f = \int_{\T^n} \psi f \;d\mu \qquad (f \in \clm_\clq).
\]
\item $\text{dist}_{L^1(\T^n)}\Big(\frac{\bar \psi}{\|\psi\|_2^2}, \widetilde{\clm}_{\clq, X}\Big) \geq 1$.
\end{enumerate}
\end{theorem}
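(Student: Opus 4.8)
The plan is straightforward: Theorem \ref{thm: combine CLT} asserts the equivalence of (1), (2), and (3), and this is precisely the content already split across the two preceding theorems. So the proof is purely a matter of assembling what has been proved. First I would invoke Theorem \ref{thm: new CLT 1}, which gives $(1) \Longleftrightarrow (2)$: $X$ is liftable if and only if the functional $X_\clq$ on $(\clm_\clq, \|\cdot\|_1)$ defined by $X_\clq f = \int_{\T^n} \psi f \, d\mu$ is contractive. Here one should note that $\psi = X(P_\clq 1)$ is the same $\psi$ appearing in the statement, so the hypotheses match verbatim and no translation is needed.

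Next I would invoke Theorem \ref{thm: new CLT 2}, which gives $(1) \Longleftrightarrow (3)$: $X$ is liftable if and only if $\text{dist}_{L^1(\T^n)}\big(\frac{\bar\psi}{\|\psi\|_2^2}, \tilde{\clm}_{\clq, X}\big) \geq 1$, where $\tilde{\clm}_{\clq, X} = ({\clq}^{conj} \ominus \{\bar\psi\}) \dotplus (\clm_n \dotplus H^2_0(\T^n))$ — again exactly the subspace named in the statement of Theorem \ref{thm: combine CLT}. Chaining the two biconditionals through the common middle term ``$X$ is liftable'' yields $(1) \Longleftrightarrow (2) \Longleftrightarrow (3)$, which is the claim.

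\begin{proof}
This is simply the combination of the two preceding results. Set $\psi = X(P_\clq 1)$ as in the statement. By Theorem \ref{thm: new CLT 1}, condition (1) holds if and only if the functional $X_\clq: (\clm_\clq, \|\cdot\|_1) \to \mathbb{C}$, $X_\clq f = \int_{\T^n} \psi f \, d\mu$, is contractive; that is, $(1) \Longleftrightarrow (2)$. By Theorem \ref{thm: new CLT 2}, condition (1) holds if and only if $\text{dist}_{L^1(\T^n)}\big(\frac{\bar\psi}{\|\psi\|_2^2}, \tilde{\clm}_{\clq, X}\big) \geq 1$; that is, $(1) \Longleftrightarrow (3)$. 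Combining these two equivalences gives $(1) \Longleftrightarrow (2) \Longleftrightarrow (3)$, as claimed.
\end{proof}

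Since every ingredient has already been established, there is no genuine obstacle here; the only thing to be careful about is confirming that the objects $\clm_\clq$, $\tilde{\clm}_{\clq, X}$, and $\psi$ as defined in the hypothesis of Theorem \ref{thm: combine CLT} coincide with those in Theorems \ref{thm: new CLT 1} and \ref{thm: new CLT 2} — which they do, by inspection of the definitions — so that the two biconditionals can legitimately be chained. One could also remark, following the discussion after Theorem \ref{thm: new CLT 1}, that the equivalence $(2) \Longleftrightarrow (3)$ can be seen directly from the decomposition $\clm_\clq = \mathbb{C}\bar\psi \dotplus \tilde{\clm}_{\clq, X}$ together with the vanishing $X_\clq|_{\clm_n \dotplus H^2_0(\T^n)} \equiv 0$, but this adds nothing beyond what the chaining already provides.
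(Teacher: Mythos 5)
Your proposal is correct and matches the paper exactly: the paper states Theorem \ref{thm: combine CLT} with the one-line preamble ``Combining Theorem \ref{thm: new CLT 1} and Theorem \ref{thm: new CLT 2}, we have the following,'' and offers no further proof, which is precisely the chaining of biconditionals you carry out.
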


The techniques involved in the association of the existence of commutant lifting with the distance formula are far-reaching. In the following section, we will apply some of the concepts introduced here to solve a perturbation problem.

\section{Perturbations of analytic functions}\label{sec: perturbation}

Our aim is to present a classification of $H^2(\T^n)$-functions that can be perturbed by $H^2(\T^n)$-functions so that the resultant functions are in $\cls(\D^n)$. Our perturbation result is of independent interest and not directly related to the commutant lifting theorem. However, the technique involved here is motivated by the one used in the proof of our main results.

Throughout the sequel, we denote
\[
\cll_n = \clm_n \oplus H^2_0(\T^n).
\]
Recall that $H^2_0(\T^n) = H^2(\T^n) \ominus \{1\}$ is the closed subspace of $H^2(\T^n)$ of functions vanishing at the origin. Recall also that
\[
\clm_n = L^2(\T^n) \ominus ({H^2(\T^n)}^{conj} + H^2(\T^n)),
\]
the closed subspace of $L^2(\T^n)$ generated by all the trigonometric monomials that are neither analytic nor co-analytic. In particular, we have the crucial property that
\[
\langle f, 1 \rangle_{L^2(\T^n)} = 0 \qquad (f \in \cll_n).
\]
Finally, we recall a basic fact from Banach space theory: Let $x$ be a vector in a Banach space $B$. Then
\[
\|x\|_B = \sup \{|x^*(x)|: x^* \in B^*, \|x^*\| \leq 1\}.
\]
Now we are ready for the perturbation theorem.

\begin{theorem}\label{thm: perturbation}
Let $f \in H^2(\T^n)$ be a nonzero function. There exists $g \in \{f\}^\perp$ such that $f + g \in \cls(\D^n)$ if and only if
\[
\text{dist}_{L^1(\T^n)}\Big(\frac{\bar f}{\|f\|_2^2}, \cll_n \Big) \geq 1.
\]
\end{theorem}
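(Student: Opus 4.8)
The plan is to mirror the argument used for the commutant lifting theorems in Section \ref{sec: CLT Classification}, replacing the role of $\clm_\clq$ by the (simpler, $X$-independent) space $\cll_n = \clm_n \oplus H^2_0(\T^n)$ and the role of $\psi$ by $f$. The key observation is that the existence of a perturbation $g \in \{f\}^\perp$ with $f+g \in \cls(\D^n)$ is equivalent to the existence of $\vp \in \cls(\D^n)$ whose ``projection onto the $f$-direction'' equals $f$ in the right sense — more precisely, whose Fourier expansion agrees with $f$ on the constant term and is orthogonal to all mixed and all co-analytic monomials except possibly in the $\bar f$ direction. Concretely, I would first reformulate: if $\vp = f + g \in \cls(\D^n)$ with $g \perp f$, then $\vp$ is analytic, so $\int_{\T^n} \vp h\, d\mu = 0$ for every $h \in \cll_n$ (since every monomial in $\cll_n$ is either non-analytic mixed or analytic vanishing at $0$, and $\vp$ analytic kills the first type via $\langle \vp, \bar h\rangle = 0$ and the second... wait — here one must be a bit careful: $\vp$ analytic does \emph{not} automatically kill $\int \vp h\,d\mu$ for $h \in H^2_0(\T^n)$; rather one uses that we are free to choose $g$, i.e. this is a condition to be arranged, not automatic). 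So the correct reformulation is: such a $g$ exists iff there is $\vp \in \cls(\D^n)$ with $\langle \vp, \bar f/\|f\|_2^2\rangle_{L^2} = 1$ — i.e. $\langle \vp, f\rangle = \|f\|_2^2$ — and no further constraint, because once $\langle \vp, f\rangle = \|f\|_2^2$ we may set $g := \vp - f$, which is automatically in $\{f\}^\perp$ (as $\langle \vp - f, f\rangle = \|f\|_2^2 - \|f\|_2^2 = 0$). Conversely any such $g$ produces $\vp = f+g$ with $\langle \vp, f\rangle = \|f\|_2^2$.

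With that reformulation in hand, the problem becomes: \emph{does there exist $\vp \in L^\infty(\T^n)$ with $\|\vp\|_\infty \le 1$, $\vp$ analytic, and $\langle \vp, f\rangle_{L^2} = \|f\|_2^2$?} I would drop the analyticity requirement momentarily and recover it at the end exactly as in the proof of Theorem \ref{thm: new CLT 1}: if $\vp \in L^\infty(\T^n)$ satisfies $\int_{\T^n}\vp h\, d\mu = 0$ for all $h \in \cll_n$, then testing against the monomials $z^k$ ($k \in \mathbb{N}^n$) and $z_A^{k_A}\bar z_B^{k_B}$ forces all non-analytic Fourier coefficients of $\vp$ to vanish, so $\vp \in \cls(\D^n)$. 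Thus the existence question is equivalent to: \emph{is there a contractive functional on $L^1(\T^n)$ that vanishes on $\cll_n$ and sends $\bar f/\|f\|_2^2$ to $1$?} — because under the duality $(L^1(\T^n))^* \cong L^\infty(\T^n)$ of \eqref{eqn: duality}, such a functional is $\chi_\vp$ for a unique $\vp$ with $\|\vp\|_\infty \le 1$, the vanishing on $\cll_n$ gives analyticity, and $\chi_\vp(\bar f/\|f\|_2^2) = \int \vp \bar f/\|f\|_2^2\, d\mu = \langle \vp, f\rangle/\|f\|_2^2 = 1$.

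The final step is the standard Hahn--Banach / distance duality: a contractive functional on $L^1(\T^n)$ vanishing on the closed subspace $\cll_n$ and taking the prescribed value $1$ at $\bar f/\|f\|_2^2$ exists if and only if
\[
\Big| 1 \Big| \le \|\bar f/\|f\|_2^2 + h\|_1 \quad \text{for all } h \in \cll_n,
\]
i.e. if and only if $\text{dist}_{L^1(\T^n)}(\bar f/\|f\|_2^2, \cll_n) \ge 1$. (Forward direction: apply the functional to $\bar f/\|f\|_2^2 + h$ and use $\|\text{functional}\| \le 1$ together with vanishing on $\cll_n$. Reverse direction: define the functional on $\mathbb{C}\cdot(\bar f/\|f\|_2^2) \dotplus \cll_n$ by $c(\bar f/\|f\|_2^2) + h \mapsto c$; the distance hypothesis says precisely that this functional has norm $\le 1$ on that subspace — here one needs $\bar f/\|f\|_2^2 \notin \overline{\cll_n}$, which follows since $\langle \bar f, 1\rangle_{L^2}^{conj}$... more simply, $\bar f$ has nonzero pairing with the bounded analytic function $1$ while every element of $\cll_n$ pairs to $0$ with $1$, using $\langle h,1\rangle = 0$ for $h \in \cll_n$ — so $\text{dist} \ge 1 > 0$ is consistent and the one-dimensional extension is well-defined; then extend to all of $L^1(\T^n)$ by Hahn--Banach without increasing the norm). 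I expect the main obstacle to be the clean bookkeeping of the reformulation in the first paragraph — making sure that ``$g \in \{f\}^\perp$ with $f+g \in \cls$'' is genuinely equivalent to the single scalar constraint $\langle \vp, f\rangle = \|f\|_2^2$ on an arbitrary Schur function $\vp$, and in particular that no hidden compatibility condition between $f$ and the mixed/co-analytic monomials sneaks in — but this is exactly handled by the freedom in choosing $g$ and by recovering analyticity from the vanishing on $\cll_n$ at the end, as in Theorem \ref{thm: new CLT 1}.
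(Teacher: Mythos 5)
Your proposal follows essentially the same route as the paper: you reformulate the perturbation problem as the existence of a Schur function $\vp$ with $\int_{\T^n}\vp\bar f\,d\mu=\|f\|_2^2$, convert this via the duality $(L^1(\T^n))^*\cong L^\infty(\T^n)$ to the existence of a contractive functional on $L^1(\T^n)$ that annihilates $\cll_n$ and takes a prescribed value at $\bar f/\|f\|_2^2$, apply Hahn--Banach against the distance hypothesis, and recover analyticity of the resulting $\vp\in L^\infty(\T^n)$ by testing against the monomials spanning $\cll_n$, exactly as in the paper's proofs of both this theorem and Theorem \ref{thm: new CLT 1}.

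Two remarks on details. (i) The worry raised in your first paragraph is unfounded: for $\vp$ analytic and $h\in H^2_0(\T^n)$, the integral $\int_{\T^n}\vp h\,d\mu$ equals the constant Fourier coefficient of $\vp h$, which is $\vp(0)h(0)=0$; so analyticity of $\vp$ \emph{does} automatically kill every $h\in\cll_n$, and the subsequent reformulation you arrive at is correct for a simpler reason than you give. (ii) The justification that $\bar f/\|f\|_2^2\notin\overline{\cll_n}$ via pairing with the constant function $1$ fails when $f(0)=0$, since $\int_{\T^n}\bar f\,d\mu=\overline{f(0)}$; but this is not needed, because well-definedness of the functional $c\,(\bar f/\|f\|_2^2)+h\mapsto c$ on the algebraic span already follows from the $L^2$-orthogonality $\bar f\perp\cll_n$ (which gives $\bar f\notin\cll_n$), or equivalently from the contractivity estimate furnished by the distance hypothesis itself.
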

\begin{proof}
We start by recalling the definition of distance function:
\[
\text{dist}_{L^1(\T^n)}\Big(\frac{\bar f}{\|f\|_2^2}, \cll_n \Big) = \inf \Big\{\Big\|\frac{\bar f}{\|f\|_2^2} + h\Big\|_1: h \in \cll_n\Big\}.
\]
Suppose $g \in \{f\}^\perp$ be such that $\psi:= f + g \in \cls(\D^n)$. It is enough to prove that
\[
\Big\|\frac{\bar f}{\|f\|_2^2} + h\Big\|_1 \geq 1 \qquad (h \in \cll_n).
\]
Fix $h \in \cll_n$. Since $\psi \in \cls(\D^n)$ and $\cls(\D^n)$ is a subset of the closed unit ball of $L^\infty(\T^n)$, we have $\psi \in L^\infty(\T^n)$ and $\|\psi\|_\infty \leq 1$. By the duality (see \eqref{eqn: duality})
\[
(L^1(\T^n))^* \cong L^\infty(\T^n),
\]
it follows that $\chi_\psi \in (L^1(\T^n))^*$ and
\[
\|\psi\|_\infty = \|\chi_\psi\| \leq 1,
\]
where
\[
\chi_\psi g = \int_{\T^n} \psi g \,d\mu,
\]
for all $g \in L^1(\T^n)$. In particular, for
\[
g = \frac{\bar f}{\|f\|_2^2} + h \in L^1(\T^n),
\]
we compute
\[
\begin{split}
\int_{\T^n} \psi \Big(\frac{\bar f}{\|f\|_2^2} + h\Big) \,d\mu & = \Big\langle f + g, \frac{f}{\|f\|_2^2} + \bar{h} \Big\rangle_{L^2(\T^n)}
\\
& = 1 + \Big\langle g, \frac{f}{\|f\|_2^2} + \bar{h} \Big\rangle_{L^2(\T^n)}
\\
& = 1.
\end{split}
\]
The last but one equality follows from the fact that (note that $\bar h$ has no analytic part)
\[
\langle f, \bar{h} \rangle_{L^2(\T^n)} = 0,
\]
and the last equality is due to the fact that $g \in \{f\}^\perp$ and
\[
\langle g, \bar{h} \rangle_{L^2(\T^n)} = 0,
\]
similar reason as in the preceding equality. We also have used the fact that $f$ is analytic and $\langle h, 1 \rangle_{L^2(\T^n)} = 0$. Therefore, $\chi_\psi \in (L^1(\T^n))^*$ with $\|\chi_\psi\| \leq 1$ and
\[
\Big|\chi_\psi \Big(\frac{\bar f}{\|f\|_2^2} + h\Big) \Big| = 1.
\]
The norm identity for Banach spaces stated preceding the statement of this theorem immediately implies that
\[
\Big\|\frac{\bar f}{\|f\|_2^2} + h\Big\|_1 \geq 1.
\]
For the reverse direction, suppose the above inequality holds for all $h \in \cll_n$. Equivalently
\[
\|\lambda \bar{f} + h\|_1 \geq |\lambda | \|f\|_2^2,
\]
for all $h \in \cll_n$ and $\lambda \in \mathbb{C}$. Define $\cls$ a subspace of $L^1(\T^n)$ as
\[
\cls:= \text{span} \{\bar f, \cll_n\},
\]
and then define a linear functional $\zeta_f: \cls \raro \mathbb{C}$ by
\[
\zeta_f (\lambda \bar f + h) = \int_{\T^n} (\lambda \bar{f} + h) f \, d\mu,
\]
for all $h \in \cll_n$ and $\lambda \in \mathbb{C}$. As in the proof of the forward direction, we have
\[
\begin{split}
\int_{\T^n} f h \, d\mu & = \langle h, \bar f \rangle_{L^2(\T^n)}
\\
& = 0,
\end{split}
\]
for all $h \in \cll_n$. Moreover, since
\[
\int_{\T^n} f \bar f \, d\mu = \|f\|^2_2,
\]
it follows that
\[
\begin{split}
|\zeta_f (\lambda \bar f + h)| & = |\lambda| \| f \|^2_2
\\
& \leq \|\lambda \bar f + h\|_1,
\end{split}
\]
for all $h \in \cll_n$ and $\lambda \in \mathbb{C}$. This ensures that $\zeta_f$ is a contractive functional on $\cls$; hence, by the Hahn-Banach theorem, there exists $\zeta \in (L^\infty(\T^n))^*$ such that $\|\zeta\| \leq 1$ and
\[
\zeta|_{\cls} = \zeta_f.
\]
Again, by the duality \eqref{eqn: duality}, there exists $\vp \in L^\infty(\T^n)$ such that $\|\vp\|_\infty \leq 1$ and
\[
\chi_\vp|_{\cls} = \zeta|_{\cls} = \zeta_f.
\]
Therefore
\begin{equation}\label{eqn: proof pert 1}
\int_{\T^n} (\lambda \bar f + h) f \, d\mu = \int_{\T^n} (\lambda \bar f + h) \vp \, d\mu,
\end{equation}
for all $h \in \cll_n$ and $\lambda \in \mathbb{C}$. We now claim that $\vp$ is analytic (which would clearly imply that $\vp \in H^\infty(\D^n)$). As in the proof of Theorem \ref{thm: new CLT 1}, we consider a typical monomial $F$ from $\cll_n = \clm_n \dotplus H^2_0(\T^n)$. Therefore
\[
F = z^k,
\]
for some $k\in \mathbb{N}^n$, or
\[
F = z_A^{k_A} \bar{z}_B^{k_B},
\]
for some $k_A \in \Z_+^{|A|}$ and $k_B \in \Z_+^{|B|}$, where $A, B \subseteq \{1, \ldots, n\}$, $A \cap B = \emptyset$, and $A, B \neq \emptyset$  (see the representation of $\clm_n$ in \eqref{eqn: def of Mn}). We compute
\[
\begin{split}
0 & = \langle f, \bar{F} \rangle_{L^2(\T^n)}
\\
& = \int_{\T^n} f F \,d\mu
\\
& = \int_{\T^n} \vp F \,d\mu
\\
& = \langle \vp, \bar{F} \rangle_{L^2(\T^n)},
\end{split}
\]
which proves the claim. Since $\|\vp\|_\infty \leq 1$, we conclude that $\vp \in \cls(\D^n)$. Using the containment $H^\infty(\D^n) \subseteq H^2(\D^n)$, first we conclude $\vp \in H^2(\D^n)$, and then write
\[
\vp = c f \oplus g,
\]
for some scalar $c$ and function $g \in H^2(\D^n)$ such that $g \in \{f\}^\perp$. It remains to show that $c = 1$. Observe, if $h = 0$, and
\[
\lambda = \frac{1}{\|f\|_2^2},
\]
then \eqref{eqn: proof pert 1} along with the fact that $\langle g, f \rangle = 0$ yields
\[
\begin{split}
1 & = \int_{\T^n} f \frac{\bar f}{\|f\|_2^2} \, d\mu
\\
& = \int_{\T^n} \vp \frac{\bar f}{\|f\|_2^2} \, d\mu
\\
& = \Big\langle \vp, \frac{f}{\|f\|_2^2} \Big\rangle_{H^2(\T^n)}
\\
& = \Big\langle c f \oplus g, \frac{f}{\|f\|_2^2} \Big\rangle_{H^2(\T^n)}
\\
& = c.
\end{split}
\]
This completes the proof of the theorem.
\end{proof}

We know, in particular, that $\clm_1 = \{0\}$ (see \eqref{eqn: M1 = 0}). Moreover, as observed earlier, that $H^2_0(\T) = z H^2(\T)$. Therefore
\[
\cll_1 = z H^2(\T),
\]
and as a result, the preceding theorem is simplified as follows:

\begin{corollary}
Given $f \neq 0$ in $H^2(\T)$, there exists $g \in \{f\}^\perp$ such that
\[
f + g \in \cls(\D),
\]
if and only if
\[
\text{dist}_{L^1(\T)}\Big(\frac{\bar f}{\|f\|_2^2}, z H^2(\T)\Big) \geq 1.
\]
\end{corollary}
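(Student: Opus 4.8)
The plan is to obtain this corollary as the specialization of Theorem~\ref{thm: perturbation} to the single-variable setting $n=1$; the only substantive point is to identify the subspace $\cll_n$ of $L^1(\T^n)$ occurring there, at $n=1$, explicitly as $z H^2(\T)$.

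First I would unwind the definition $\cll_n = \clm_n \oplus H^2_0(\T^n)$ at $n=1$, so that $\cll_1 = \clm_1 \oplus H^2_0(\T)$. I would then invoke $\clm_1 = \{0\}$ from \eqref{eqn: M1 = 0}: on $\T$ every trigonometric monomial $z^k$ with $k \in \Z$ is either analytic ($k \geq 0$) or co-analytic ($k \leq 0$), so there are no monomials that are neither, and the ``mixed'' space collapses to the trivial subspace. Next I would recall that $H^2_0(\T) = \{h \in H^2(\T): h(0) = 0\}$; writing $h = \sum_{k \geq 0} a_k z^k$, the condition $h(0) = 0$ is precisely $a_0 = 0$, which is in turn equivalent to $h \in z H^2(\T)$. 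Combining the two observations yields $\cll_1 = \{0\} \oplus z H^2(\T) = z H^2(\T)$.

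With the identification $\cll_1 = z H^2(\T)$ in hand, Theorem~\ref{thm: perturbation} applied to $n=1$ states verbatim that for a nonzero $f \in H^2(\T)$ there is a $g \in \{f\}^\perp$ with $f + g \in \cls(\D)$ if and only if $\text{dist}_{L^1(\T)}\Big(\frac{\bar f}{\|f\|_2^2}, z H^2(\T)\Big) \geq 1$, which is exactly the claim.

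Because the corollary is a pure specialization of an already established theorem, I do not expect any genuine obstacle. The single verification step --- the identity $\cll_1 = z H^2(\T)$ --- is elementary, resting only on the vanishing of $\clm_1$ and the standard Fourier-coefficient description of $H^2(\T)$-functions vanishing at the origin.
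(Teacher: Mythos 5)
Your proposal is correct and follows essentially the same route as the paper: the paper also notes $\clm_1 = \{0\}$ and $H^2_0(\T) = zH^2(\T)$, concludes $\cll_1 = zH^2(\T)$, and then specializes Theorem~\ref{thm: perturbation} to $n=1$.
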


The following example illustrates the above theorem.

\begin{example}
Fix a real number $0 < c <1$, and pick $b \in (c^2, c)$. Also fix a multiindex $k_0 \in \Z_+^n$, $k_0 \neq (0, \ldots, 0)$, and set
\[
\Lambda := \Z_+^n \setminus \{k_0\}.
\]
Finally, choose a sequence $\{a_k\}_{k \in \Lambda} \subseteq \mathbb{R}_+$ such that
\begin{enumerate}
\item $\displaystyle\sum_{k \in \Lambda} a_k$ diverges, and
\item $\displaystyle\sqrt{\sum_{k \in \Lambda} a_k^2 + b^2} = c$.
\end{enumerate}
Set
\[
f = \sum_{k \in \Lambda} a_k z^k + b z^{k_0}.
\]
We want to show that $f$ can be perturbed to become a Schur function. To this end, we first observe that $f(0, \ldots, 0)= a_0$ and
\[
f(1, \ldots, 1) = b + \sum_{k \in \Lambda} a_k,
\]
and hence (by continuity)
\[
f(L) = (a_0, \infty),
\]
where $L$ is the line joining $(0, \ldots, 0)$ and $(1, \ldots, 1)$. We conclude, in particular, that
\[
f \notin H^\infty(\D^n).
\]
Moreover
\[
\|f\|_2 = c,
\]
by construction of $f$. We now consider the functional $\chi_{z^{k_0}} \in (L^1(\T^n))^*$ (see the duality \eqref{eqn: duality}). Clearly
\[
\|\chi_{z^{k_0}}\| = 1.
\]
Given arbitrary functions $g \in \clm_n$ and $h \in H^2_0(\T^n)$, we compute
\[
\begin{split}
\chi_{z^{k_0}} \Big(\frac{\bar{f}}{c^2} + g + h\Big) & = \int_{\T^n} z^{k_0} \Big(\frac{\bar{f}}{c^2} + g + h\Big) d\mu\\
& = \Big\langle \frac{\bar{f}}{c^2} + g + h, \bar{z}^{k_0} \Big\rangle_{L^2(\T^n)}
\\
& = \Big\langle \frac{\bar{f}}{c^2}, \bar{z}^{k_0} \Big\rangle_{L^2(\T^n)}
\\
& = \frac{b}{c^2}.
\end{split}
\]
Since $b > c^2$, it follows that
\[
\chi_{z^{k_0}} \Big(\frac{\bar{f}}{c^2} + g + h\Big) \geq 1,
\]
and consequently, the norm identity that was mentioned preceding the statement of Theorem \ref{thm: interpolation} infers that
\[
\Big\|\frac{\bar{f}}{c^2} + g + h\Big\|_1 \geq 1.
\]
Given that $\cll_n = \clm_n \dotplus H^2_0(\T^n)$, the above is equivalent to saying that
\[
\text{dist}_{L^1(\T^n)}\Big(\frac{\bar f}{\|f\|_2^2}, \cll_n \Big) \geq 1,
\]
and hence, by Theorem \ref{thm: perturbation}, we conclude that $f \oplus g \in \cls(\D^n)$ for some $g \in \{f\}^\perp$.
\end{example}

It may appear to be a coincidence that the distance recipe in Theorem \ref{thm: perturbation} as well as in Theorem \ref{thm: new CLT 2} (and the quantitative interpolation theorem in Section \ref{sec: quant inter}) is similar to the well known Nehari theorem \cite{N} for Hankel operators. Recall that the Hankel operator with symbol $\vp \in L^\infty(\T)$ is defined by
\[
H_\vp = P_{H^2_-(\T)} L_\vp|_{H^2(\T)},
\]
where $H^2_-(\T) = L^2(\T) \ominus H^2(\T)$. The Nehari theorem states:
\[
\|H_\vp\| = \text{dist}(\vp, H^\infty(\D)) = \|\vp\|_\infty.
\]
Furthermore, it is well known that the Nehari problem is related to the Nevanlinna-Pick interpolation problem for rational functions. See also the well known Adamyan, Arov, and Krein theorem, also known as the AAK step-by-step extension \cite[Chapter 2]{Nik}. Another important formula is due to Adamyan, Arov and Krein \cite{AAK1}:\
\[
\|H_\vp\|_{ess} = \text{dist}(\vp, C(\T) + H^\infty(\D)),
\]
for all $\vp \in L^\infty(\T)$, where $C(\T)$ denotes the space of all continuous functions on $\T$, and $\|H_\vp\|_{ess}$ denotes the essential norm of $H_\vp$. Hankel operators in several variables \cite{Ro 98} also present significant challenges. We refer the reader to Coifman, Rochberg, and Weiss \cite{Richard} for some progress to the theory of Hankel operators (also see \cite{FS}).

\section{Interpolation}\label{sec: weak lift}

The goal of this section is to provide a solution to the interpolation problem. As previously mentioned, Sarason's commutant lifting theorem recovers the Nevanlinna-Pick interpolation with an elegant proof. However, Sarason only needed to use his lifting theorem for some special finite-dimensional quotient modules. These quotient modules are generated by finitely many kernel functions.

First, we prove that Sarason-type quotient modules (we call them zero-based quotient modules) in several variables always admit lifting to $H^\infty(\D^n)$-functions (we call it weak lifting).

\begin{definition}
Let $\clq \subseteq H^2(\D^n)$ be a quotient module, and let $X \in \clb(\clq)$. Suppose $X S_{z_i} = S_{z_i} X$ for all $i=1, \ldots, n$. We say that $X$ admits a weak lift or $X$ is weakly liftable if there exists $\vp \in H^\infty(\D^n)$ such that
\[
X = S_\vp.
\]
\end{definition}

To put it another way, a weak lifting is a lifting that lacks control over the norm. Given a set $\clz \subseteq \D^n$, define
\[
\clq_\clz = \overline{\text{span}} \{\mathbb{S}(\cdot, w): w \in \clz\}.
\]

\begin{definition}
A quotient module $\clq \subseteq H^2(\T^n)$ is said to be \textit{zero-based} if there exists $\clz \subseteq \D^n$ such that $\clq = \clq_\clz$.
\end{definition}

For a zero-based quotient module $\clq_{\clz}$, by using the reproducing property \eqref{eqn: rep prop}, we have the following representation of the corresponding submodule (hence the name zero-based)
\[
\clq_{\clz}^\perp = \{f \in H^2(\T^n): f(w) = 0 \text{ for all } w \in \clz\}.
\]
Since $\{\mathbb{S}(\cdot, w): w \in \clz\}$ is a set of linearly independent vectors, a zero-based quotient module $\clq_\clz$ is finite-dimensional if and only if
\[
\# \clz = \text{dim} \clq_\clz < \infty.
\]
For each $j \in \{1, \ldots, n\}$, denote by $\pi_j: \mathbb{C}^n \longrightarrow \mathbb{C}$ the projection map onto the $j$-th coordinate. In particular, $z \in \mathbb{C}^n$ can be expressed as
\[
z = (\pi_1(z), \ldots, \pi_n(z)).
\]
The following easy-to-see lemma will be useful in what follows.

\begin{lemma}\label{eqn: Y star S}
Let $\clz = \{z_i\}_{i=1}^m \subset \D^n$ be a set of distinct points, and let $X \in \clb(\clq_\clz)$. Then $X$ is module map if and only if there exists $\{w_i\}_{i=1}^m \subset \mathbb{C}$ such that
\[
X^* \mathbb{S} (\cdot, z_j) = {w}_j \mathbb{S} (\cdot, z_j),
\]
for all $j=1, \ldots, m$.
\end{lemma}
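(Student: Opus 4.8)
The plan is to characterize module maps on $\clq_\clz$ via their adjoints acting on the spanning set of kernel functions. The key structural fact is that $\clq_\clz = \operatorname{span}\{\bS(\cdot, z_j) : j = 1, \ldots, m\}$ is finite-dimensional, and (by \eqref{eqn: eigen vec kernel 2}) each $\bS(\cdot, z_j)$ is a joint eigenvector of $T_{z_i}^*$ with eigenvalue $\bar{\pi_i(z_j)}$. Passing to the quotient module, $S_{z_i}^* = (P_{\clq_\clz} T_{z_i}|_{\clq_\clz})^* = P_{\clq_\clz} T_{z_i}^*|_{\clq_\clz}$, and since $\clq_\clz$ is $T_{z_i}^*$-invariant, we get $S_{z_i}^* \bS(\cdot, z_j) = \bar{\pi_i(z_j)} \bS(\cdot, z_j)$. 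So each $\bS(\cdot, z_j)$ is a joint eigenvector for the tuple $(S_{z_1}^*, \ldots, S_{z_n}^*)$, and because the points $z_j$ are distinct, these eigenvectors together span $\clq_\clz$.

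First I would prove the ``if'' direction. Suppose $X^* \bS(\cdot, z_j) = w_j \bS(\cdot, z_j)$ for all $j$. I need $X S_{z_i} = S_{z_i} X$, equivalently $S_{z_i}^* X^* = X^* S_{z_i}^*$ on $\clq_\clz$. It suffices to check this on the spanning set $\{\bS(\cdot, z_j)\}$. Compute
\[
S_{z_i}^* X^* \bS(\cdot, z_j) = w_j S_{z_i}^* \bS(\cdot, z_j) = w_j \overline{\pi_i(z_j)} \bS(\cdot, z_j),
\]
and on the other hand
\[
X^* S_{z_i}^* \bS(\cdot, z_j) = \overline{\pi_i(z_j)} X^* \bS(\cdot, z_j) = \overline{\pi_i(z_j)} w_j \bS(\cdot, z_j).
\]
These agree, so $S_{z_i}^* X^* = X^* S_{z_i}^*$ on a spanning set, hence on all of $\clq_\clz$; taking adjoints gives $X S_{z_i} = S_{z_i} X$.

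For the ``only if'' direction, suppose $X$ is a module map, so $X^* S_{z_i}^* = S_{z_i}^* X^*$ for all $i$. Fix $j$ and set $v = X^* \bS(\cdot, z_j)$. Then for each $i$,
\[
S_{z_i}^* v = S_{z_i}^* X^* \bS(\cdot, z_j) = X^* S_{z_i}^* \bS(\cdot, z_j) = \overline{\pi_i(z_j)}\, X^* \bS(\cdot, z_j) = \overline{\pi_i(z_j)}\, v.
\]
Thus $v$ lies in the joint eigenspace $\bigcap_{i=1}^n \ker(S_{z_i}^* - \overline{\pi_i(z_j)} I)$. I then claim this joint eigenspace is exactly $\mathbb{C}\,\bS(\cdot, z_j)$: it contains $\bS(\cdot, z_j)$, and if $u = \sum_k c_k \bS(\cdot, z_k) \in \clq_\clz$ satisfies $S_{z_i}^* u = \overline{\pi_i(z_j)} u$ for all $i$, then $\sum_k c_k(\overline{\pi_i(z_k)} - \overline{\pi_i(z_j)}) \bS(\cdot, z_k) = 0$; linear independence of the kernels forces $c_k = 0$ whenever $z_k \neq z_j$ in some coordinate, i.e.\ whenever $k \neq j$, leaving $u \in \mathbb{C}\,\bS(\cdot, z_j)$. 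Hence $v = X^* \bS(\cdot, z_j) = w_j \bS(\cdot, z_j)$ for some scalar $w_j$, which is what we wanted.

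I expect the only mild subtlety — not really an obstacle — to be the bookkeeping in showing that the joint eigenspace collapses to the one-dimensional span, which rests on the distinctness of the $z_j$'s (two distinct points of $\D^n$ differ in at least one coordinate) together with the linear independence of distinct Szeg\"o kernels; both are already recorded in the preliminaries. Everything else is a direct computation on the spanning set of joint eigenvectors, so the proof is short.
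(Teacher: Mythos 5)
Your proof is correct and follows essentially the same strategy as the paper: reduce to the adjoint relation $X^*S_{z_i}^* = S_{z_i}^*X^*$, use the joint-eigenvector property of the kernel functions, and conclude. The one genuine (if mild) difference is how the joint eigenspace collapse is justified. The paper observes that $X^*\bS(\cdot,z_j)$ lies in $\bigcap_i \ker(T_{z_i}-\pi_i(z_j)I)^*$ inside all of $H^2(\T^n)$ and then invokes the already-established full-space fact \eqref{eqn: eigen vec kernel} that this intersection is $\mathbb{C}\,\bS(\cdot,z_j)$. You instead stay inside the finite-dimensional space $\clq_\clz$, showing directly from linear independence of the kernels and distinctness of the nodes that the joint eigenspace of $(S_{z_1}^*,\dots,S_{z_n}^*)$ in $\clq_\clz$ is one-dimensional. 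Your route is a bit more elementary and self-contained (no appeal to the full-space eigenvector characterization), at the cost of a small amount of extra linear algebra; the paper's route is shorter because it reuses a preliminary. Both are sound, and your write-up also usefully makes explicit the ``if'' direction, which the paper leaves as an easy remark.
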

\begin{proof}
Let $X \in \clb(\clq_\clz)$ and suppose $X S_{z_i} = S_{z_i} X$ for all $i=1, \ldots, n$. Since $X^* S_{z_i}^* = S_{z_i}^* X^*$, using the fact that $\clq_\clz$ is a quotient module, we find
\[
T_{z_i}^*|_{\clq_\clz} X^* = X^* T_{z_i}^*|_{\clq_\clz},
\]
for all $i=1, \ldots, n$. In view of \eqref{eqn: eigen vec kernel 2}, we compute
\[
\begin{split}
(T_{z_i}^*|_{\clq_\clz} X^*) \mathbb{S} (\cdot, z_j) & = (X^* T_{z_i}^*|_{\clq_\clz})\mathbb{S} (\cdot, z_j)
\\
& = X^* T_{z_i}^* \mathbb{S} (\cdot, z_j)
\\
& = \overline{\pi_i(z_j)} X^* \mathbb{S} (\cdot, z_j).
\end{split}
\]
Since $(T_{z_i}^*|_{\clq_\clz} X^*) \mathbb{S} (\cdot, z_j) = T_{z_i}^*(X^* \mathbb{S}(\cdot, z_j))$, it follows that
\[
T_{z_i}^*(X^* \mathbb{S}(\cdot, z_j)) = \overline{\pi_i(z_j)} (X^* \mathbb{S}(\cdot, z_j)),
\]
for all $i = 1, \ldots, n$, and $j=1, \ldots, m$. Equivalently
\[
X^* \mathbb{S}(\cdot, z_j) \in \bigcap_{i=1}^n \ker (T_{z_i} - \pi_i(z_j) I_{H^2(\T^n)})^*,
\]
for all $j=1, \ldots, m$. Now, in view of the joint eigenspace property \eqref{eqn: eigen vec kernel}, the right side of the above is $\mathbb{C} \mathbb{S}(\cdot, z_j)$, and hence, there exists a scalar $w_j$ such that
\[
X^* \mathbb{S} (\cdot, z_j) = {w}_j \mathbb{S} (\cdot, z_j),
\]
for all $j=1, \ldots, m$. The converse direction is easy and follows again from \eqref{eqn: eigen vec kernel 2} and the definition of $\clq_\clz$.
\end{proof}

The proposition that follows is very crucial and will be used in what follows.

\begin{proposition}
\label{lemma: proj at 1}
Let $\clq \subseteq H^2(\T^n)$ be a quotient module. Let
\[
\theta_{\clq} = P_{\clq} 1.
\]
If $\theta_{\clq} \in H^\infty(\D^n)$, then $S_{\theta_{\clq}} = I_{\clq}$, the identity operator on $\clq$.
\end{proposition}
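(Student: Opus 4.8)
The plan is to show that $S_{\theta_\clq}$ agrees with the identity on the dense subset $\{P_\clq z^{\bk} : \bk \in \Z_+^n\}$ of $\clq$, and then invoke continuity. First I would record that $S_{\theta_\clq} = P_\clq T_{\theta_\clq}|_\clq$ is a bounded operator on $\clq$, which is immediate since $\theta_\clq \in H^\infty(\D^n)$ forces $T_{\theta_\clq} \in \clb(H^2(\T^n))$ (with $\|T_{\theta_\clq}\| = \|\theta_\clq\|_\infty$ by \eqref{eqn: Toeplitz norm equality}). I would also recall, exactly as used in the proof of Theorem \ref{thm: new CLT 1}, that since $\clq^\perp$ is a submodule and hence invariant under every Toeplitz operator with symbol in $H^\infty(\D^n)$, one has $P_\clq T_{\theta_\clq} P_\clq = P_\clq T_{\theta_\clq}$ on $H^2(\T^n)$.

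The key step is the single computation at the constant function. Using the identity above together with $T_{\theta_\clq} 1 = \theta_\clq$, I get
\[
S_{\theta_\clq}(P_\clq 1) = P_\clq T_{\theta_\clq}|_\clq (P_\clq 1) = P_\clq T_{\theta_\clq} P_\clq 1 = P_\clq T_{\theta_\clq} 1 = P_\clq \theta_\clq.
\]
Now, because $\theta_\clq = P_\clq 1 \in \clq$, we have $P_\clq \theta_\clq = \theta_\clq = P_\clq 1$, so $S_{\theta_\clq}(P_\clq 1) = P_\clq 1$.

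Next I would propagate this to all $P_\clq z^{\bk}$ by the commutation property. Since $S_{\theta_\clq}$ is a module map (i.e. $S_{\theta_\clq} S_{z_i} = S_{z_i} S_{\theta_\clq}$ for all $i$, hence $S_{\theta_\clq} S_z^{\bk} = S_z^{\bk} S_{\theta_\clq}$), and since $S_z^{\bk}(P_\clq 1) = P_\clq z^{\bk}$ (the same elementary computation $P_\clq T_z^{\bk} P_\clq 1 = P_\clq z^{\bk}$ used in the proof of Theorem \ref{thm: new CLT 1}), I obtain
\[
S_{\theta_\clq}(P_\clq z^{\bk}) = S_{\theta_\clq} S_z^{\bk}(P_\clq 1) = S_z^{\bk} S_{\theta_\clq}(P_\clq 1) = S_z^{\bk}(P_\clq 1) = P_\clq z^{\bk},
\]
for every $\bk \in \Z_+^n$. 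Finally, since polynomials are dense in $H^2(\T^n)$, the set $\{P_\clq z^{\bk} : \bk \in \Z_+^n\}$ spans a dense subspace of $\clq = P_\clq H^2(\T^n)$; as $S_{\theta_\clq}$ and $I_\clq$ are bounded and coincide there, $S_{\theta_\clq} = I_\clq$.

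I do not anticipate a genuine obstacle here: the argument is purely formal once one has the factorization $P_\clq T_{\theta_\clq} P_\clq = P_\clq T_{\theta_\clq}$ and the density of $\{P_\clq z^{\bk}\}$ in $\clq$, both of which are standard and already invoked earlier in the paper. The only point deserving a line of care is the very first reduction $S_{\theta_\clq}(P_\clq 1) = P_\clq \theta_\clq$, where one must pass from $P_\clq 1$ to $1$ \emph{before} applying $T_{\theta_\clq}$ — this is precisely what the identity $P_\clq T_{\theta_\clq} P_\clq = P_\clq T_{\theta_\clq}$ licenses.
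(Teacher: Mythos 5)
Your proof is correct. It does, however, take a genuinely different route from the paper's.

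The paper's argument works directly with an arbitrary $f\in\clq$: it writes $1=\theta_\clq\oplus\vp$ with $\vp\in H^\infty(\D^n)\cap\clq^\perp$, multiplies through by $f$ to get $f=\theta_\clq f+\vp f$, and then shows $P_\clq(\vp f)=0$ by approximating $f$ by polynomials $p_j$ (so that $\vp p_j\in\clq^\perp$, since $\clq^\perp$ is a submodule, and $\vp p_j\to\vp f$). Projecting then gives $f=P_\clq(\theta_\clq f)=S_{\theta_\clq}f$. The orthogonal complement $\vp=1-\theta_\clq$ is the protagonist of that argument, and the approximation step is genuinely needed because $\vp f\in\clq^\perp$ for arbitrary $f\in H^2$ does not follow directly from shift-invariance of $\clq^\perp$.

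Your route never looks at $\vp$. You verify the identity on the single vector $P_\clq 1$ using only $P_\clq T_{\theta_\clq}P_\clq=P_\clq T_{\theta_\clq}$ and the tautology $P_\clq\theta_\clq=\theta_\clq$, and then propagate to $\{P_\clq z^{\bk}\}_{\bk\in\Z_+^n}$ using the commutation $S_{\theta_\clq}S_z^{\bk}=S_z^{\bk}S_{\theta_\clq}$, finishing by density. This is more algebraic and exploits the module-map structure of $S_{\theta_\clq}$, whereas the paper's argument is closer in spirit to an explicit multiplier computation. Both use the same underlying facts (submodule invariance and density of $P_\clq\mathbb{C}[z_1,\dots,z_n]$ in $\clq$), but your version avoids the limiting argument on $\vp p_j$ and is, if anything, slightly shorter. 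The paper's proof has the compensating virtue of making transparent why the result is true: the missing piece $1-\theta_\clq$ acts as zero on $\clq$ after compression.

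One minor remark: your phrasing "one must pass from $P_\clq 1$ to $1$ before applying $T_{\theta_\clq}$" is exactly the right point to flag, and the identity $P_\clq T_{\theta_\clq}P_\clq=P_\clq T_{\theta_\clq}$ is the correct justification; it is worth noting that this identity is precisely where the hypothesis $\theta_\clq\in H^\infty(\D^n)$ enters, since it is needed both for $T_{\theta_\clq}$ to be bounded and for $\clq^\perp$ to be $T_{\theta_\clq}$-invariant.
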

\begin{proof}
Since $\theta_{\clq} = P_{\clq} 1 \in \clq \cap H^\infty(\D^n)$, in view of the decomposition $H^2(\T^n) = \clq \oplus \clq^\perp$, there exists $\vp \in H^\infty(\D^n) \cap \clq^\perp$ such that
\[
1 = \theta_{\clq} \oplus \vp \in \clq \oplus \clq^\perp.
\]
Fix $f \in \clq$. In particular, since $f \in H^2(\T^n)$, there exists a sequence
\[
\{p_j\}_{j=1}^\infty \subseteq \mathbb{C}[z_1, \ldots, z_n],
\]
such that
\[
p_j \longrightarrow f \quad \text{ in }  H^2(\T^n).
\]
Since $\vp \in H^\infty(\D^n) \cap \clq^\perp$ is a multiplier, the above implies
\[
\vp p_j \longrightarrow \vp f \quad \text{ in }  H^2(\T^n).
\]
Moreover, $\vp \in \clq^\perp$ implies that
\[
\{p_j \vp\}_{j=1}^\infty \subseteq \clq^\perp,
\]
as $\clq^\perp$ is a submodule, and hence $\vp f \in \clq^\perp$. Equivalently, we have
\[
P_\clq (\vp f) = 0.
\]
Finally, since $\theta_{\clq}, \vp \in H^\infty(\D^n)$, it follows that
\[
\begin{split}
f & = \theta_{\clq} f + \vp f
\\
& = P_{\clq} (\theta_{\clq} f + \vp f ) \qquad (\text{as } f \in \clq)
\\
& = P_{\clq} (\theta_{\clq} f) + 0
\\
& = S_{\theta_{\clq}} f,
\end{split}
\]
which yields $S_{\theta_{\clq}} f = f$, and completes the proof of the proposition.
\end{proof}

We are now ready for the weak lifting. It asserts, in essence, that a module map on a finite-dimensional zero-based quotient module always admits a lift to $H^\infty(\D^n)$.

\begin{corollary}\label{cor: lift without 1}
Let $\clz = \{z_1, \ldots, z_m\} \subset \D^n$ be $m$ distinct points, $X \in \clb(\clq_\clz)$, and let
\[
\vp = X(P_{\clq_\clz} 1).
\]
Then
\[
X S_{z_i} = S_{z_i} X,
\]
for all $i=1, \ldots, n$, if and only if $\vp \in H^\infty(\D^n)$. Moreover, in this case, we have
\[
X = S_\vp,
\]
and, in particular
\[
\vp \in H^\infty(\D^n) \cap \clq_\clz.
\]
\end{corollary}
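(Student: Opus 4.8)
The plan is to prove the equivalence directly and then identify the interpolant. For the nontrivial direction, suppose $X S_{z_i} = S_{z_i} X$ for all $i$. By Lemma~\ref{eqn: Y star S}, there exist scalars $\{w_i\}_{i=1}^m$ with $X^* \mathbb{S}(\cdot, z_i) = w_i \mathbb{S}(\cdot, z_i)$. The natural candidate for the symbol is $\vp := X(P_{\clq_\clz} 1) = X \theta_{\clq_\clz}$, which lies in $\clq_\clz$, hence is a linear combination of the $\mathbb{S}(\cdot, z_i)$, and so in particular $\vp \in H^\infty(\D^n)$ (every finite-dimensional zero-based quotient module consists of rational, bounded functions). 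It remains to show $X = S_\vp$. The key point is that $S_\vp$ is itself a module map (since $S_\vp S_{z_i} = S_{z_i} S_\vp$), so by Lemma~\ref{eqn: Y star S} again $S_\vp^* \mathbb{S}(\cdot, z_i)$ is a scalar multiple of $\mathbb{S}(\cdot, z_i)$; computing that scalar, one gets $S_\vp^* \mathbb{S}(\cdot, z_i) = \overline{\vp(z_i)}\, \mathbb{S}(\cdot, z_i)$ from the reproducing property. Thus it suffices to check $\vp(z_i) = w_i$ for all $i$, for then $X^*$ and $S_\vp^*$ agree on a spanning set of $\clq_\clz$.

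The crux, then, is to verify the interpolation identity $\vp(z_i) = w_i$. Here I would use Proposition~\ref{lemma: proj at 1}: since $\theta_{\clq_\clz} = P_{\clq_\clz} 1 \in H^\infty(\D^n)$, we have $S_{\theta_{\clq_\clz}} = I_{\clq_\clz}$. Now compute, using the reproducing property \eqref{eqn: rep prop} and $X^* \mathbb{S}(\cdot, z_i) = w_i \mathbb{S}(\cdot, z_i)$,
\[
\vp(z_i) = \langle \vp, \mathbb{S}(\cdot, z_i) \rangle = \langle X \theta_{\clq_\clz}, \mathbb{S}(\cdot, z_i) \rangle = \langle \theta_{\clq_\clz}, X^* \mathbb{S}(\cdot, z_i) \rangle = \overline{w_i} \langle \theta_{\clq_\clz}, \mathbb{S}(\cdot, z_i) \rangle.
\]
Since $\theta_{\clq_\clz} = P_{\clq_\clz} 1$ and $\mathbb{S}(\cdot, z_i) \in \clq_\clz$, we have $\langle \theta_{\clq_\clz}, \mathbb{S}(\cdot, z_i) \rangle = \langle 1, \mathbb{S}(\cdot, z_i) \rangle = \overline{\mathbb{S}(z_i, 0)} = 1$. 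Hence $\vp(z_i) = \overline{w_i}$... wait — one must be careful with conjugates: the cleaner route is to argue directly that $X$ and $S_\vp$ agree on the spanning set $\{\mathbb{S}(\cdot, z_i)\}$ by matching their adjoints' eigenvalues, using the observation that $S_\vp^* \mathbb{S}(\cdot, z_i) = \overline{\vp(z_i)}\mathbb{S}(\cdot, z_i)$ and the computation above shows $\overline{\vp(z_i)} = w_i$. So $X^*$ and $S_\vp^*$ have the same action on each $\mathbb{S}(\cdot, z_i)$, giving $X^* = S_\vp^*$ and hence $X = S_\vp$.

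For the converse direction there is nothing to do: if $X = S_\vp$ for some $\vp \in H^\infty(\D^n)$, then $X$ commutes with each $S_{z_i}$ because $S_\vp S_{z_i} = P_{\clq_\clz} T_\vp T_{z_i}|_{\clq_\clz} = P_{\clq_\clz} T_{z_i} T_\vp|_{\clq_\clz} = S_{z_i} S_\vp$, using that $T_\vp$ and $T_{z_i}$ commute and that $\clq_\clz^\perp$ is invariant under $T_\vp$ and $T_{z_i}$ so the compressions multiply correctly. The ``moreover'' statement — that $\vp = X(P_{\clq_\clz}1)$ and $\vp \in H^\infty(\D^n) \cap \clq_\clz$ — is then immediate from the construction above, since $\vp$ was defined as $X \theta_{\clq_\clz}$ which lies in $\clq_\clz$, and membership in $H^\infty(\D^n)$ follows because $\clq_\clz$ is finite-dimensional and spanned by bounded rational kernel functions.

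I expect the main obstacle to be the bookkeeping with adjoints and complex conjugates in the interpolation identity, together with justifying that $S_\vp$ is determined on all of $\clq_\clz$ by its adjoint's eigenvalues on the $\mathbb{S}(\cdot,z_i)$ — this is where the fact that $\{\mathbb{S}(\cdot,z_i)\}_{i=1}^m$ is a basis of $\clq_\clz$ (distinctness of the $z_i$ plus linear independence of kernel functions) is essential. Everything else is a routine application of Lemma~\ref{eqn: Y star S}, Proposition~\ref{lemma: proj at 1}, and the reproducing property.
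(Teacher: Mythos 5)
Your proof is correct, but it takes a different route from the paper's. The paper proves this corollary via Proposition~\ref{lemma: proj at 1} ($S_{\theta_{\clq}} = I_{\clq}$ when $\theta_\clq = P_\clq 1 \in H^\infty(\D^n)$): it first shows $S_\vp\theta_\clq = \vp = X\theta_\clq$, and then propagates this single identity across all of $\clq_\clz$ using the module-map relation $X S_z^k = S_z^k X$ together with the density of $\{P_\clq z^k \theta_\clq : k \in \Z_+^n\}$ in $\clq$. You instead bypass the propagation step entirely by exploiting the spectral structure peculiar to zero-based quotient modules: Lemma~\ref{eqn: Y star S} says that on $\clq_\clz$ both $X^*$ and $S_\vp^*$ diagonalize in the kernel basis $\{\mathbb{S}(\cdot,z_i)\}$, so it suffices to match a finite list of eigenvalues, and the reproducing property does that in one line ($\vp(z_i) = \langle X\theta_{\clq_\clz}, \mathbb{S}(\cdot,z_i)\rangle = \overline{w_i}\langle 1, \mathbb{S}(\cdot,z_i)\rangle = \overline{w_i}$, hence $\overline{\vp(z_i)} = w_i$). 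Your method is essentially what the paper later does in the proof of Theorem~\ref{thm: interpolation} (the ``Claim'' there uses exactly the eigenvalue formula $S_\vp^*\mathbb{S}(\cdot,z_i) = \overline{\vp(z_i)}\mathbb{S}(\cdot,z_i)$), so you have effectively anticipated and reused that technique here. The trade-offs: the paper's density argument is more robust (it applies verbatim to quotient modules that are not zero-based, once $\theta_\clq \in H^\infty$), whereas yours is shorter and cleaner but only works because $\clq_\clz$ has a kernel-function eigenbasis. One small cleanup: you cite Proposition~\ref{lemma: proj at 1} but never actually use the conclusion $S_{\theta_{\clq_\clz}} = I$; the only facts you need are $\theta_{\clq_\clz} \in \clq_\clz$ and $\clq_\clz \subseteq H^\infty(\D^n)$, which are immediate.
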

\begin{proof}
The sufficient part is trivial. We prove the necessary part. For simplicity of notation, we set $\clq = \clq_\clz$. Let $X \in \clb(\clq)$ and suppose that $X S_{z_i} = S_{z_i} X$ for all $i=1, \ldots, n$. As in Proposition \ref{lemma: proj at 1}, set
\[
\theta_{\clq} = P_\clq 1.
\]
As observed earlier, $\mathbb{S}(\cdot, w) \in H^\infty(\D^n)$ for all $w \in \D^n$ implies that $\clq \subseteq H^\infty(\D^n)$. In particular, $\theta_{\clq} \in H^\infty(\D^n)$. By Proposition \ref{lemma: proj at 1}, we have
\[
S_{\theta_\clq} = I_\clq.
\]
Since $X \in \clb(\clq)$, it follows that
\[
\vp:= X \theta_\clq \in H^\infty(\D^n).
\]
Therefore
\[
\begin{split}
S_\vp \theta_\clq &= P_\clq (\vp \theta_\clq)
\\
& = S_{\theta_\clq} \vp
\\
& = \vp
\\
& = X \theta_\clq .
\end{split}
\]
The remainder of the proof is based on the standard property of the module map $X$. Indeed, we first observe that
\[
\clq = \overline{\text{span}} \{P_{\clq} z^k P_{\clq} 1: k \in \Z_+^n\}.
\]
On the other hand, for $\bk \in \Z_+^n$, since $X S_z^{\bk} = S_z^{\bk}X$, we have
\[
\begin{split}
X (P_\clq (z^{\bk} \theta_\clq)) & = X(S_z^{\bk} \theta_\clq)
\\
& = S_z^{\bk} X \theta_\clq
\\
& = P_\clq z^{\bk} \vp
\\
& = P_\clq z^{\bk} S_{\vp} \theta_\clq
\\
& = S_\vp (P_\clq (z^{\bk} \theta_\clq)).
\end{split}
\]
This completes the proof of the fact that $X = S_\vp$. The final assertion follows from the definition of $\theta_{\clq}$.
\end{proof}

As already pointed out, weak lifting does not capture the delicate structure of Schur functions on $\D^n$, $n > 1$.

We will now look at the interpolation problem. Recall once again that
\[
\bS(z, w) = \prod_{i=1}^{m} \frac{1}{1 - z_i \bar{w}_i} \qquad (z, w \in \D^n),
\]
is the Szeg\"{o} kernel of $\D^n$, and
\[
\bS(z, w) = \Big\langle \bS(\cdot, w), \bS(\cdot, z) \Big\rangle_{H^2(\T^n)} \qquad (z, w \in \D^n).
\]

\begin{theorem}\label{thm: interpolation}
Let $\clz = \{z_i\}_{i=1}^m \subset \D^n$ be $m$ distinct points, and let $\{w_i\}_{i=1}^m \subset \D$ be $m$ scalars. Set
\[
\clm_{\clq_\clz} = {\clq}^{conj}_{\clz} \dotplus (\clm_n \dotplus H^2_0(\T^n)).
\]
Then there exists $\vp \in \cls(\D^n)$ such that
\[
\vp(z_i) = w_i,
\]
for all $i=1, \ldots, m$, if and only if
\[
\mi_{\clz, \clw} f = \int_{\T^n} \psi_{\clz, \clw} f \,d \mu \qquad (f \in \clm_{\clq_\clz}),
\]
defines a contraction $\mi_{\clz, \clw}: (\clm_{\clq_\clz}, \|\cdot\|_1) \raro \mathbb{C}$, where
\[
\psi_{\clz, \clw} = \sum_{i=1}^{m} c_i \bS(\cdot, z_i),
\]
and the scalar coefficients $\{c_i\}_{i=1}^m$ are given by the identity
\[
\begin{bmatrix}
c_1
\\
c_2\\
\vdots
\\
c_m
\end{bmatrix}
=
\begin{bmatrix}
\bS(z_1, z_1) & \bS(z_1, z_2) & \cdots & \bS(z_1, z_m)
\\
\bS(z_2, z_1) & \bS(z_2, z_2) & \cdots & \bS(z_2, z_m)
\\
\vdots & \ddots & \ddots & \vdots
\\
\bS(z_m, z_1) & \bS(z_m, z_2) & \cdots & \bS(z_m, z_m)
\end{bmatrix}^{-1}
 \begin{bmatrix}
w_1
\\
w_2\\
\vdots
\\
w_m
\end{bmatrix}.
\]
\end{theorem}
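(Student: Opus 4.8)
The plan is to realise Theorem \ref{thm: interpolation} as a direct consequence of the commutant lifting classification in Theorem \ref{thm: combine CLT}, applied to the zero-based quotient module $\clq_\clz$ and a canonically attached module map. First I would introduce the operator $X_{\clz, \clw} \in \clb(\clq_\clz)$ determined by
\[
X_{\clz, \clw}^* \bS(\cdot, z_j) = \bar w_j \bS(\cdot, z_j) \qquad (j = 1, \ldots, m);
\]
this is well defined because $\{\bS(\cdot, z_j)\}_{j=1}^m$ is a basis of $\clq_\clz$, and by the converse part of Lemma \ref{eqn: Y star S} it is a module map, $X_{\clz, \clw} S_{z_i} = S_{z_i} X_{\clz, \clw}$. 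The key point is that solving the interpolation problem is the \emph{same} as lifting $X_{\clz, \clw}$. Indeed, for any $\vp \in H^\infty(\D^n)$ and $w \in \D^n$ one has the standard relation $T_\vp^* \bS(\cdot, w) = \overline{\vp(w)}\, \bS(\cdot, w)$ (generalising \eqref{eqn: eigen vec kernel 2}), whence, since $\bS(\cdot, z_j) \in \clq_\clz$,
\[
S_\vp^* \bS(\cdot, z_j) = P_{\clq_\clz}\big(\overline{\vp(z_j)}\,\bS(\cdot, z_j)\big) = \overline{\vp(z_j)}\, \bS(\cdot, z_j).
\]
Comparing with the defining relation of $X_{\clz, \clw}$ and using that the kernel functions are linearly independent, one sees that for $\vp \in \cls(\D^n)$ the conditions ``$\vp(z_i) = w_i$ for all $i$'' and ``$S_\vp = X_{\clz, \clw}$'' are equivalent. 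Thus an interpolant in $\cls(\D^n)$ exists if and only if $X_{\clz, \clw}$ admits a lift.

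Next I would compute $\psi := X_{\clz, \clw}(P_{\clq_\clz} 1)$ and identify it with the function $\psi_{\clz, \clw}$ of the statement. Write $G = \big(\bS(z_i, z_j)\big)_{i,j=1}^m$ for the Gram matrix of the kernel functions; it is invertible since the $\bS(\cdot, z_j)$ are linearly independent, so $\psi \in \clq_\clz$ can be written uniquely as $\psi = \sum_{j=1}^m c_j \bS(\cdot, z_j)$. Testing against $\bS(\cdot, z_i)$ and using $\bS(z_i, z_j) = \langle \bS(\cdot, z_j), \bS(\cdot, z_i)\rangle$, the defining relation $X_{\clz,\clw}^*\bS(\cdot,z_i)=\bar w_i\bS(\cdot,z_i)$, and $\langle P_{\clq_\clz} 1, \bS(\cdot, z_i)\rangle = \langle 1, \bS(\cdot, z_i)\rangle = 1$ (the last by the reproducing property \eqref{eqn: rep prop}, as $\bS(\cdot, z_i) \in \clq_\clz$), I get
\[
\sum_{j=1}^m c_j \bS(z_i, z_j) = \langle \psi, \bS(\cdot, z_i)\rangle = \big\langle P_{\clq_\clz} 1,\, X_{\clz, \clw}^* \bS(\cdot, z_i)\big\rangle = w_i \big\langle P_{\clq_\clz} 1,\, \bS(\cdot, z_i)\big\rangle = w_i,
\]
for every $i$, that is, $G \mathbf{c} = \mathbf{w}$ with $\mathbf{c} = (c_1, \ldots, c_m)^{t}$ and $\mathbf{w} = (w_1, \ldots, w_m)^{t}$. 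This is precisely the matrix identity for the coefficients in the statement, so $\psi = \psi_{\clz, \clw}$ and the functional $X_\clq$ of Theorem \ref{thm: combine CLT} (for $X = X_{\clz, \clw}$ and $\clq = \clq_\clz$) coincides with $\mi_{\clz, \clw}$. Since $\clm_{\clq_\clz}$ is literally $\clm_\clq$ for $\clq = \clq_\clz$, the equivalence $(1) \Leftrightarrow (2)$ of Theorem \ref{thm: combine CLT} now reads: $X_{\clz, \clw}$ is liftable $\iff$ $\mi_{\clz, \clw}$ is a contractive functional on $(\clm_{\clq_\clz}, \|\cdot\|_1)$, which together with the first paragraph is exactly the claimed equivalence.

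I do not anticipate a genuine obstacle here: the content of the theorem is entirely supplied by Theorem \ref{thm: combine CLT}, and what remains is the translation of the interpolation data into the module map $X_{\clz, \clw}$ together with the short linear-algebra computation of $\psi_{\clz, \clw}$. The only point requiring a word of care is that Theorem \ref{thm: combine CLT} is stated for $X \in \clb_1(\clq)$, whereas $X_{\clz, \clw}$ is not a priori a contraction. In the implication ``interpolant exists $\Rightarrow$ $\mi_{\clz, \clw}$ contractive'' this is harmless, since then $X_{\clz, \clw} = S_\vp$ with $\|\vp\|_\infty \leq 1$ forces $\|X_{\clz, \clw}\| \leq 1$. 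For the reverse implication I would instead invoke the converse half of the proof of Theorem \ref{thm: new CLT 1} directly: that argument uses only that $\psi = X_{\clz, \clw}(P_{\clq_\clz} 1)$ is analytic (true, as $\psi \in \clq_\clz \subseteq H^2(\T^n)$), that the functional is contractive, and that $X_{\clz,\clw}$ is a module map, and it produces — via the Hahn--Banach theorem and the duality $(L^1(\T^n))^* \cong L^\infty(\T^n)$ — a symbol $\vp \in \cls(\D^n)$ with $X_{\clz, \clw} = S_\vp$; then $\vp(z_i) = w_i$ follows from the first paragraph.
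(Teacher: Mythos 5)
Your proposal is correct and follows essentially the same path as the paper: define the module map $X_{\clz,\clw}$ via its adjoint action on the kernel functions, show that interpolation by a Schur function is equivalent to $S_\vp = X_{\clz,\clw}$, and then feed this into the commutant lifting classification. Two small points differ, and both are modest improvements. First, you compute the coefficients $c_i$ directly from $\sum_j c_j\,\bS(z_i,z_j) = \langle \psi, \bS(\cdot,z_i)\rangle = \langle P_{\clq_\clz}1, X_{\clz,\clw}^*\bS(\cdot,z_i)\rangle = w_i$, whereas the paper first invokes Corollary \ref{cor: lift without 1} to write $X_{\clz,\clw} = S_{\psi_{\clz,\clw}}$ and then extracts $\psi_{\clz,\clw}(z_j) = w_j$; your computation is self-contained and avoids the appeal to the weak lifting result. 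Second, you explicitly flag that Theorem \ref{thm: new CLT 1} (and \ref{thm: combine CLT}) is stated with the hypothesis $X \in \clb_1(\clq)$, and you correctly observe that the backward implication of that theorem only uses analyticity of $\psi$, contractivity of the functional, and the intertwining relations — not a priori contractivity of $X$ — so the argument closes with $\|X_{\clz,\clw}\| \le 1$ obtained a posteriori from $X_{\clz,\clw} = S_\vp$ with $\|\vp\|_\infty \le 1$. The paper applies Theorem \ref{thm: new CLT 1} without commenting on this, so your added care tightens the exposition without changing the substance.
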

\begin{proof}
Consider the module map $X_{\clz, \clw}$ on the quotient module $\clq_{\clz}$ as (see Lemma \ref{eqn: Y star S})
\[
X_{\clz, \clw}^* \mathbb{S} (\cdot, z_j) = \bar{w}_j \mathbb{S} (\cdot, z_j),
\]
for all $j=1, \ldots, m$. Define
\begin{equation}\label{psi Z W}
\psi_{\clz, \clw} = X_{\clz, \clw}(P_{\clq_{\clz}} 1).
\end{equation}
We note the crucial fact that (as $\clq_\clz \subset H^\infty(\D^n)$, or see Corollary \ref{cor: lift without 1})
\[
\psi_{\clz, \clw} \in H^\infty(\D^n).
\]
\textsf{Claim:} A function $\vp \in \cls(\D^n)$ interpolates $\{z_i\}_{i=1}^m \subset \D^n$ and $\{w_i\}_{i=1}^m \subset \D$, that is
\[
\vp(z_i) = w_i,
\]
for all $i=1, \ldots, m$, if and only if
\[
S_{\vp} = X_{\clz, \clw}.
\]
Indeed, since $\mathbb{S}(\cdot, z_i) \in \clq_{\clz}$, it follows that $P_{\clq_{\clz}} \mathbb{S}(\cdot, z_i) = \mathbb{S}(\cdot, z_i)$ and hence
\[
\begin{split}
S_\vp^* \mathbb{S}(\cdot, z_i) & = P_{\clq_{\clz}} T_\vp^* \mathbb{S}(\cdot, z_i)
\\
& = \overline{\vp(z_i)} \mathbb{S}(\cdot, z_i),
\end{split}
\]
for all $i=1, \ldots, m$. The definition of $X_{\clz, \clw}$ now supports the claim. Of course, $\vp$ is a lift of $X_{\clz, \clw}$. Then, by Theorem \ref{thm: new CLT 1}, it follows that $\vp \in \cls(\D^n)$ interpolates $\{z_i\}_{i=1}^m$ and $\{w_i\}_{i=1}^m$ if and only if
\[
\mi_{\clz, \clw} f = \int_{\T^n} \psi_{\clz, \clw} f \,d \mu \qquad (f \in \clm_{\clq_\clz}),
\]
defines a contraction $\mi_{\clz, \clw}: (\clm_{\clq_\clz}, \|\cdot\|_1) \raro \mathbb{C}$. This proves the first half of the theorem. Now all that is left to do is calculate the representation of $\psi_{\clz, \clw}$. Corollary \ref{cor: lift without 1} says that
\[
X_{\clz, \clw} = S_{\vp} = S_{\psi_{\clz, \clw}}.
\]
Since $\psi_{\clz, \clw} \in \clq_\clz$, there exists scalars $\{c_i\}_{i=1}^m$ such that
\[
\psi_{\clz, \clw} = \sum_{i=1}^{m} c_i \bS(\cdot, z_i).
\]
To compute the coefficients $\{c_i\}_{i=1}^m$ of the preceding expansion, we employ both reproducing kernel Hilbert space methods and conventional linear algebra. Fix $j \in \{1, \ldots, m\}$. Then
\[
\begin{split}
X_{\clz, \clw}^* \bS(\cdot, z_j) & = S^*_{\psi_{\clz, \clw}} \bS(\cdot, z_j)
\\
& = \overline{\psi_{\clz, \clw}(z_j)} \bS(\cdot, z_j),
\end{split}
\]
where, on the other hand, $X_{\clz, \clw}^* \bS(\cdot, z_j) = \bar{w}_j \bS(\cdot, z_j)$. Therefore
\[
w_j = \psi_{\clz, \clw}(z_j),
\]
and hence, by the reproducing property of kernel functions \eqref{eqn: rep prop}, it follows that
\[
\begin{split}
w_j & = \psi_{\clz, \clw}(z_j)
\\
& = \Big\langle \psi_{\clz, \clw}, \bS(\cdot, z_j) \Big\rangle_{H^2(\T^n)}
\\
& = \Big\langle \sum_{i=1}^{m} c_i \bS(\cdot, z_i), \bS(\cdot, z_j) \Big\rangle_{H^2(\T^n)}
\\
& = \sum_{i=1}^{m} c_i \bS(z_j, z_i),
\end{split}
\]
for all $j=1, \ldots, m$. In other words, we have
\[
\begin{bmatrix}
\bS(z_1, z_1) & \bS(z_1, z_2) & \cdots & \bS(z_1, z_m)
\\
\bS(z_2, z_1) & \bS(z_2, z_2) & \cdots & \bS(z_2, z_m)
\\
\vdots & \ddots & \ddots & \vdots
\\
\bS(z_m, z_1) & \bS(z_m, z_2) & \cdots & \bS(z_m, z_m)
\end{bmatrix}
\begin{bmatrix}
c_1
\\
c_2\\
\vdots
\\
c_m
\end{bmatrix}
 =
 \begin{bmatrix}
w_1
\\
w_2\\
\vdots
\\
w_m
\end{bmatrix},
\]
equivalently
\[
\begin{bmatrix}
c_1
\\
c_2\\
\vdots
\\
c_m
\end{bmatrix}
=
\begin{bmatrix}
\bS(z_1, z_1) & \bS(z_1, z_2) & \cdots & \bS(z_1, z_m)
\\
\bS(z_2, z_1) & \bS(z_2, z_2) & \cdots & \bS(z_2, z_m)
\\
\vdots & \ddots & \ddots & \vdots
\\
\bS(z_m, z_1) & \bS(z_m, z_2) & \cdots & \bS(z_m, z_m)
\end{bmatrix}^{-1}
 \begin{bmatrix}
w_1
\\
w_2\\
\vdots
\\
w_m
\end{bmatrix}.
\]
Note that the $m \times m$ matrix
\[
\begin{bmatrix}
\bS(z_1, z_1) & \bS(z_1, z_2) & \cdots & \bS(z_1, z_m)
\\
\bS(z_2, z_1) & \bS(z_2, z_2) & \cdots & \bS(z_2, z_m)
\\
\vdots & \ddots & \ddots & \vdots
\\
\bS(z_m, z_1) & \bS(z_m, z_2) & \cdots & \bS(z_m, z_m)
\end{bmatrix},
\]
is nothing but the Gram matrix of the linearly independent kernel functions $\{\bS(\cdot, z_i): i =1, \ldots, m\}$. The invertibility of the matrix is now immediate.
\end{proof}

\begin{remark}
For solutions to the interpolation problem in the setting of bounded harmonic functions and $H^p$ functions, we refer the reader to Duren and Williams \cite{DW}. The setting of \cite{DW} is, in fact, that of more general Banach spaces, where the technique also utilizes a Hahn–Banach type extension theorem. The results in \cite{DW} follows the line of the interpolation problem originally settled by Carleson \cite{Car}.
\end{remark}

\section{Quantitative interpolation and examples}\label{sec: quant inter}

This section is a continuation of our investigation into the interpolation problem. To begin, we will provide a quantitative solution to the interpolation problem on $\D^n$. The quantitative solution will then be employed to generate examples of interpolation with interpolating functions in $\cls(\D^n)$, $n \geq 2$.

Let $\clz = \{z_i\}_{i=1}^m \subset \D^n$ be $m$ distinct points, and let $\{w_i\}_{i=1}^m \subset \D$ be $m$ scalars. As usual, define the $m$-dimensional zero-based quotient module $\clq_\clz$ of $H^2(\T^n)$ by
\begin{equation}\label{eqn: Q Z}
\clq_{\clz} = \text{span} \{\mathbb{S}(\cdot, z_i): i=1, \ldots, m\},
\end{equation}
and $X_{\clz, \clw} \in \clb(\clq_\clz)$ by
\[
X_{\clz, \clw}^* \mathbb{S}(\cdot, z_j) = \bar{w}_j \mathbb{S}(\cdot, z_j) \qquad (j=1, \ldots,m).
\]
As observed in Lemma \ref{eqn: Y star S}, $X_{\clz, \clw}$ is a module map, and hence Corollary \ref{cor: lift without 1} implies
\begin{equation}\label{eqn: S ZW = S psi}
X_{\clz, \clw} = S_\psi,
\end{equation}
where
\[
\psi := X_{\clz, \clw} (P_{\clq_\clz} 1).
\]
Recall that
\[
\psi \in \clq_\clz \subset H^\infty(\D^n).
\]
On the other hand, as observed in the proof of Theorem \ref{thm: interpolation} (more specifically, the claim part in the proof of Theorem \ref{thm: interpolation}), there exists a function $\vp \in \cls(\D^n)$ such that
\[
\vp(z_i) = w_i,
\]
for all $i=1, \ldots, m$, if and only if
\[
S_{\vp} = X_{\clz, \clw}.
\]
Equivalently, $X_{\clz, \clw}$ on $\clq_\clz$ is a contraction and admits a lift (namely, $\vp \in \cls(\D^n)$). Based on Theorem \ref{thm: new CLT 2} and the fact that $\psi = X_{\clz, \clw} (P_{\clq_\clz} 1)$, this is the same as saying that
\[
\text{dist}_{L^1(\T^n)}\Big(\frac{\bar \psi}{\|\psi\|_2^2}, \widetilde{\clm}_{\clq_\clz}\Big) \geq 1,
\]
where $\widetilde \clm_{\clq_\clz} = ({\clq}^{conj}_{\clz} \ominus \{\bar \psi\}) \dotplus (\clm_n \dotplus H^2_0(\T^n))$. This results in the quantitative solution to the interpolation problem:

\begin{theorem}\label{thm: quantitative inter}
Let $\clz = \{z_i\}_{i=1}^m \subset \D^n$ be $m$ distinct points, and let $\{w_i\}_{i=1}^m \subset \D$ be $m$ scalars. Define $\psi := X_{\clz, \clw} (P_{\clq_\clz} 1)$ and
\[
\widetilde \clm_{\clq_\clz} = ({\clq}^{conj}_{\clz} \ominus \{\bar \psi\}) \dotplus (\clm_n \dotplus H^2_0(\T^n)).
\]
Then there exists $\vp \in \cls(\D^n)$ such that
\[
\vp(z_i) = w_i,
\]
for all $i=1, \ldots, m$, if and only if
\[
\text{dist}_{L^1(\T^n)}\Big(\frac{\bar \psi}{\|\psi\|_2^2}, \widetilde{\clm}_{\clq_\clz}\Big) \geq 1.
\]
Moreover, in this case, we have
\[
\psi(z_i) = w_i,
\]
for all $i=1, \ldots, m$.
\end{theorem}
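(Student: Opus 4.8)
The plan is to deduce this theorem directly from the second lifting theorem, Theorem~\ref{thm: new CLT 2}, applied to the specific module map $X_{\clz, \clw}$, combined with the correspondence between interpolation and lifting already extracted in the proof of Theorem~\ref{thm: interpolation}. In short, this is the ``quantitative'' repackaging of Theorem~\ref{thm: interpolation}: one trades the $L^1$-functional criterion there for the distance-to-a-subspace criterion of Theorem~\ref{thm: new CLT 2}.

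First I would record, via Lemma~\ref{eqn: Y star S}, that the operator $X_{\clz, \clw} \in \clb(\clq_\clz)$ determined by $X_{\clz, \clw}^* \bS(\cdot, z_j) = \bar w_j \bS(\cdot, z_j)$ is a module map; then Corollary~\ref{cor: lift without 1} applies and gives $X_{\clz, \clw} = S_\psi$ with $\psi = X_{\clz, \clw}(P_{\clq_\clz} 1) \in H^\infty(\D^n) \cap \clq_\clz$. Next I would reuse the claim proved inside Theorem~\ref{thm: interpolation}: for any $\vp \in H^\infty(\D^n)$ one has $S_\vp^* \bS(\cdot, z_i) = \overline{\vp(z_i)}\, \bS(\cdot, z_i)$ for every $i$, so $\vp$ interpolates $\{(z_i, w_i)\}_{i=1}^m$ exactly when $S_\vp = X_{\clz, \clw}$; hence an interpolant $\vp \in \cls(\D^n)$ exists if and only if $X_{\clz, \clw}$ admits a lift in the sense of Definition~\ref{def: lift}. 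Since $\psi = X_{\clz, \clw}(P_{\clq_\clz} 1)$ and, by construction, the space $\tilde{\clm}_{\clq_\clz}$ is exactly the space $\tilde{\clm}_{\clq, X}$ associated with $X = X_{\clz, \clw}$, Theorem~\ref{thm: new CLT 2} then asserts that $X_{\clz, \clw}$ is liftable if and only if $\text{dist}_{L^1(\T^n)}\big(\bar\psi / \|\psi\|_2^2,\, \tilde{\clm}_{\clq_\clz}\big) \geq 1$, which is precisely the claimed equivalence.

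For the final assertion I would simply specialize the eigenvector identity above to $\vp = \psi$ itself: from $X_{\clz, \clw} = S_\psi$ we get $\bar w_j\, \bS(\cdot, z_j) = X_{\clz, \clw}^* \bS(\cdot, z_j) = S_\psi^* \bS(\cdot, z_j) = \overline{\psi(z_j)}\, \bS(\cdot, z_j)$, and cancelling the nonzero vector $\bS(\cdot, z_j)$ gives $\psi(z_j) = w_j$ for all $j$. I do not anticipate a genuine obstacle here; the whole argument is a bookkeeping assembly of Lemma~\ref{eqn: Y star S}, Corollary~\ref{cor: lift without 1}, the interpolation-versus-lifting claim, and Theorem~\ref{thm: new CLT 2}. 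The only points worth a sentence of care are: (i) the contractivity of $X_{\clz, \clw}$ demanded by Theorem~\ref{thm: new CLT 2} is not an extra hypothesis, since whenever an interpolant $\vp \in \cls(\D^n)$ exists one has $\|X_{\clz, \clw}\| = \|S_\vp\| \le \|\vp\|_\infty \le 1$, while in the reverse direction the actual construction of $\vp$ in the proof of Theorem~\ref{thm: new CLT 1} uses only the contractivity of the functional, not of $X$; and (ii) one tacitly assumes $\psi \neq 0$, equivalently that not every $w_i$ vanishes, the excluded case being the trivial interpolation by the zero function.
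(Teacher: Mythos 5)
Your argument is correct and matches the paper's proof essentially step for step: the paper likewise assembles Lemma~\ref{eqn: Y star S}, Corollary~\ref{cor: lift without 1}, the interpolation-equals-lifting claim extracted in the proof of Theorem~\ref{thm: interpolation}, and Theorem~\ref{thm: new CLT 2}, and it verifies $\psi(z_i)=w_i$ by exactly the same eigenvector computation $\bar w_i \bS(\cdot,z_i) = X_{\clz,\clw}^*\bS(\cdot,z_i) = S_\psi^*\bS(\cdot,z_i) = \overline{\psi(z_i)}\bS(\cdot,z_i)$. Your two closing remarks, that the contractivity of $X_{\clz,\clw}$ demanded by Theorem~\ref{thm: new CLT 2} is automatic in both directions and that the case $\psi=0$ (all $w_i=0$) must be set aside as the trivial interpolation, are both correct and slightly more careful than the paper, which leaves them implicit.
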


Here is how the proof of the final assertion works: For each $i=1, \ldots, m$, in view of the definition of $X_{\clz, \clw}$ and \eqref{eqn: S ZW = S psi}, we compute
\[
\begin{split}
\bar{w}_i \mathbb{S}(\cdot, z_i) & = X_{\clz, \clw}^* \mathbb{S}(\cdot, z_i)
\\
& = S_\psi^* \mathbb{S}(\cdot, z_i)
\\
& = P_{\clq_\clz} T_{\psi}^* \mathbb{S}(\cdot, z_i)
\\
& = \overline{\psi(z_i)} \mathbb{S}(\cdot, z_i),
\end{split}
\]
as $\mathbb{S}(\cdot, z_i) \in \clq_\clz$. Therefore, $\psi(z_i) = w_i$ for all $i=1, \ldots, m$, which completes the proof. The final assertion will play an important role in the discussion that follows.

The rest of this section will be devoted to exploring examples of interpolation. We need to prove two lemmas. Before doing so, let us standardize some notations. We will set aside $m \geq 2$ as the number of nodes of the given interpolation data. We use bold letters such as $\bm{a}$, $\bm{v}$, $\bm{w}$, etc. to denote vectors in $\mathbb{C}^m$. For instance
\[
\bm{a} = (a_1, \ldots, a_m) \in \mathbb{C}^m.
\]
Also, denote by $\langle \cdot, \cdot \rangle_{\mathbb{C}^m}$ the standard inner product on $\mathbb{C}^m$. In particular
\[
\|\bm{a}\|_{\mathbb{C}^m} = (\sum_{i=1}^{m}|a_i|^2)^{\frac{1}{2}}.
\]
We write
\[
\bm{a}^\perp = \{\bm{v} \in \mathbb{C}^m: \langle \bm{a}, \bm{v}\rangle_{\mathbb{C}^m} = 0\}.
\]
In view of the above notation, for each $\bm{a} \in {\mathbb{C}^m}$, we have the orthogonal decomposition
\[
\mathbb{C}^m = \mathbb{C} \bm{a} \oplus \bm{a}^\perp.
\]
We will work in the following general setting: Fix $m$ distinct points $\clz = \{z_i\}_{i=1}^m \subset \D^n$ and $m$ scalars $\{w_i\}_{i=1}^m \subset \D$. The quotient module of interest will be $\clq_\clz \subset H^\infty(\D^n)$ as defined in \eqref{eqn: Q Z}.

\begin{lemma}\label{lemma: repr of psi}
Let $\psi \in \clq_\clz$, and suppose $\psi(z_i) = w_i$ for all $i=1, \ldots, m$. Then there exit $\bm{v} \in \bm{w}^\perp$ such that
\[
\psi = \frac{\|\psi\|^2_2}{\|\bm{w}\|_{\mathbb{C}^m}^2} \sum_{i=1}^{m} w_i \mathbb{S}(\cdot, z_i) + \sum_{i=1}^{m} v_i \mathbb{S}(\cdot, z_i).
\]
\end{lemma}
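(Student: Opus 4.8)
The plan is to expand $\psi$ in the spanning set $\{\bS(\cdot, z_i)\}_{i=1}^m$ of $\clq_\clz$, decompose the resulting coefficient vector along $\mathbb{C}\bm{w}$ and $\bm{w}^\perp$, and then identify the scalar in front of $\bm{w}$ by a short reproducing-kernel computation. First I would fix $\bm{c} = (c_1, \ldots, c_m) \in \mathbb{C}^m$ with $\psi = \sum_{i=1}^m c_i \bS(\cdot, z_i)$, which is possible since $\psi \in \clq_\clz = \text{span}\{\bS(\cdot, z_i): i = 1, \ldots, m\}$. I would first dispose of the degenerate case: if $\psi = 0$ then $w_i = \psi(z_i) = 0$ for every $i$, so the asserted formula is trivially $0 = 0$ (with the understanding that the normalization $\|\bm{w}\|_{\mathbb{C}^m}^2$ is then irrelevant); conversely, $\psi \neq 0$ forces $\bm{w} \neq 0$, since $\psi(z_i) = \langle \psi, \bS(\cdot, z_i)\rangle_{H^2(\T^n)} = 0$ for all $i$ would put $\psi \in \clq_\clz^\perp \cap \clq_\clz = \{0\}$. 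So in the nontrivial case the denominator makes sense.

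Next, using the orthogonal decomposition $\mathbb{C}^m = \mathbb{C}\bm{w} \oplus \bm{w}^\perp$ recalled just before the lemma, I would write $\bm{c} = \lambda \bm{w} + \bm{v}$ with $\lambda \in \mathbb{C}$ and $\bm{v} \in \bm{w}^\perp$. Substituting gives $\psi = \lambda \sum_{i=1}^m w_i \bS(\cdot, z_i) + \sum_{i=1}^m v_i \bS(\cdot, z_i)$, so the entire lemma reduces to showing $\lambda = \|\psi\|_2^2 / \|\bm{w}\|_{\mathbb{C}^m}^2$. Pairing $\bm{c} = \lambda \bm{w} + \bm{v}$ with $\bm{w}$ in $\mathbb{C}^m$ yields $\langle \bm{c}, \bm{w}\rangle_{\mathbb{C}^m} = \lambda \|\bm{w}\|_{\mathbb{C}^m}^2$, so it suffices to prove the single scalar identity $\langle \bm{c}, \bm{w}\rangle_{\mathbb{C}^m} = \|\psi\|_2^2$.

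For that identity I would use the reproducing property \eqref{eqn: rep prop} together with $\langle \bS(\cdot, z_i), \bS(\cdot, z_j)\rangle_{H^2(\T^n)} = \bS(z_j, z_i)$. On the one hand, the hypothesis $\psi(z_j) = w_j$ reads $\sum_{i=1}^m c_i \bS(z_j, z_i) = w_j$ for each $j$. On the other hand, expanding the norm, $\|\psi\|_2^2 = \sum_{i,j} c_i \bar{c}_j \bS(z_j, z_i) = \sum_j \bar{c}_j \bigl(\sum_i c_i \bS(z_j, z_i)\bigr) = \sum_j \bar{c}_j w_j = \overline{\langle \bm{c}, \bm{w}\rangle_{\mathbb{C}^m}}$. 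Since $\|\psi\|_2^2$ is real, this gives $\langle \bm{c}, \bm{w}\rangle_{\mathbb{C}^m} = \|\psi\|_2^2$, hence $\lambda = \|\psi\|_2^2/\|\bm{w}\|_{\mathbb{C}^m}^2$, completing the argument. The whole computation is routine; the only thing to keep straight is the placement of complex conjugates in the Gram/reproducing-kernel identities and the harmless observation that $\bm{w} \neq 0$ whenever $\psi \neq 0$, so there is no real obstacle here.
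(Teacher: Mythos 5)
Your proof is correct and follows essentially the same route as the paper's: expand $\psi$ in the kernel basis, decompose the coefficient vector $\bm{c}$ along $\mathbb{C}\bm{w} \oplus \bm{w}^\perp$, and use the interpolation condition $\psi(z_j) = w_j$ to pin down the scalar multiple of $\bm{w}$. Your final computation is a touch more direct (you compute $\|\psi\|_2^2$ via the Gram identity $\sum_{i,j} c_i\bar c_j\bS(z_j,z_i) = \sum_j\bar c_j w_j$ rather than routing through $T_\psi^*$ as the paper does), and you explicitly dispose of the degenerate case $\psi = 0 \Leftrightarrow \bm{w} = 0$, which the paper leaves implicit, but these are cosmetic differences rather than a different argument.
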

\begin{proof}
Since $\psi \in \clq_\clz$, there exists $\bm{c} \in \mathbb{C}^m$ such that
\[
\psi = \sum_{i=1}^{m} c_i \mathbb{S}(\cdot, z_i).
\]
Moreover, there exist a scalar $\alpha \in \mathbb{C}$ and a vector $\bm{v} \in \bm{c}^\perp$ such that $\bm{c} = \alpha \bm{w} \oplus \bm{v}$. Then
\[
\psi = \alpha \sum_{i=1}^{m} w_i \mathbb{S}(\cdot, z_i) + \sum_{i=1}^{m} v_i \mathbb{S}(\cdot, z_i).
\]
By assumption, $\psi \in H^\infty(\D^n)$ and $\psi(z_i) = w_i$ for all $i=1, \ldots, m$. The above equality then results in
\[
\begin{split}
\|\psi\|_2^2 & = \Big\langle \alpha \sum_{i=1}^{m} w_i \mathbb{S}(\cdot, z_i) + \sum_{i=1}^{m} v_i \mathbb{S}(\cdot, z_i), \psi \Big\rangle_{H^2(\T^n)}
\\
& = \alpha \Big\langle T_{\psi}^*\Big(\sum_{i=1}^{m} w_i \mathbb{S}(\cdot, z_i) + \sum_{i=1}^{m} v_i \mathbb{S}(\cdot, z_i)\Big), 1 \Big\rangle_{H^2(\T^n)}
\\
& = \alpha \Big\langle \Big(\sum_{i=1}^{m} w_i \overline{\psi(z_i)} \mathbb{S}(\cdot, z_i) + \sum_{i=1}^{m} v_i  \overline{\psi(z_i)}\mathbb{S}(\cdot, z_i)\Big), 1 \Big\rangle_{H^2(\T^n)}
\\
& = \alpha \Big\langle \Big(\sum_{i=1}^{m} |w_i|^2 \mathbb{S}(\cdot, z_i) + \sum_{i=1}^{m} v_i \bar{w}_i \mathbb{S}(\cdot, z_i)\Big), 1 \Big\rangle_{H^2(\T^n)}
\\
& = \alpha \|\bm{w}\|^2_{\mathbb{C}^m} + \sum_{i=1}^{m} v_i \bar{w}_i
\\
& =  \alpha \|\bm{w}\|^2_{\mathbb{C}^m},
\end{split}
\]
as $\bm{v} \perp \bm{w}$. We have also used the general property that $\langle \mathbb{S}(\cdot, w), 1 \rangle_{H^2(\T^n)} = 1$ for all $w \in \D^n$. The above identity yields
\[
\alpha = \frac{\|\psi\|_2^2}{\|\bm{w}\|^2_{\mathbb{C}^m}},
\]
which completes the proof of the lemma.
\end{proof}

The proof of the following lemma is similar to the proof of the previous one.

\begin{lemma}\label{lemma: QZ minus psi}
Let $\psi \in \clq_\clz$, and suppose $\psi(z_i) = w_i$ for all $i=1, \ldots, m$. Then
\[
\clq_\clz \ominus \{\psi\} = \Big\{ \sum_{i=1}^{m} v_i \mathbb{S}(\cdot, z_i): \bm{v} \in \bm{w}^\perp\Big\}.
\]
\end{lemma}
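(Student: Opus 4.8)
The plan is to transfer the identity to a statement about coefficient vectors in $\mathbb{C}^m$. Since the kernel functions $\{\mathbb{S}(\cdot, z_i)\}_{i=1}^m$ are linearly independent, the map $\bm{v} \mapsto \sum_{i=1}^m v_i \mathbb{S}(\cdot, z_i)$ is a linear isomorphism of $\mathbb{C}^m$ onto $\clq_\clz$; in particular every $g \in \clq_\clz$ has a unique representation of this form. Under this identification, $\clq_\clz \ominus \{\psi\}$ is by definition the set of $g \in \clq_\clz$ with $\langle g, \psi \rangle_{H^2(\T^n)} = 0$, so the claim reduces to the equivalence: for $g = \sum_{i=1}^m v_i \mathbb{S}(\cdot, z_i)$, one has $g \perp \psi$ in $H^2(\T^n)$ if and only if $\bm{v} \in \bm{w}^\perp$.

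To prove this equivalence I would compute $\langle g, \psi \rangle_{H^2(\T^n)}$ exactly as in the proof of Lemma \ref{lemma: repr of psi}. Since $\psi \in \clq_\clz \subset H^\infty(\D^n)$ is a multiplier, we have $T_\psi 1 = \psi$ and $T_\psi^* \mathbb{S}(\cdot, z_i) = \overline{\psi(z_i)}\,\mathbb{S}(\cdot, z_i) = \bar{w}_i \mathbb{S}(\cdot, z_i)$, using the hypothesis $\psi(z_i) = w_i$. Hence, with $\langle \mathbb{S}(\cdot, w), 1 \rangle_{H^2(\T^n)} = 1$,
\[
\langle g, \psi \rangle_{H^2(\T^n)} = \langle T_\psi^* g, 1 \rangle_{H^2(\T^n)} = \Big\langle \sum_{i=1}^m v_i \bar{w}_i \mathbb{S}(\cdot, z_i),\, 1 \Big\rangle_{H^2(\T^n)} = \sum_{i=1}^m v_i \bar{w}_i .
\]
As in Lemma \ref{lemma: repr of psi}, $\sum_{i=1}^m v_i \bar{w}_i = 0$ precisely when $\bm{v} \perp \bm{w}$, i.e. $\bm{v} \in \bm{w}^\perp$. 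Combining this with the coordinatization above yields $\clq_\clz \ominus \{\psi\} = \{\sum_{i=1}^m v_i \mathbb{S}(\cdot, z_i) : \bm{v} \in \bm{w}^\perp\}$.

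There is no genuine obstacle here; the argument is a direct computation parallel to Lemma \ref{lemma: repr of psi}. The only points needing care are invoking linear independence of the $\mathbb{S}(\cdot, z_i)$ so that $\bm{v}$ is well defined, using that $\psi$ is a multiplier so that $T_\psi^*$ acts diagonally on kernel functions, and tracking the complex conjugation so the resulting condition matches the convention $\bm{w}^\perp = \{\bm{v} : \langle \bm{w}, \bm{v}\rangle_{\mathbb{C}^m} = 0\}$. (The degenerate case $\psi = 0$, which by invertibility of the Gram matrix forces $\bm{w} = 0$, is covered automatically: both sides then equal $\clq_\clz$.)
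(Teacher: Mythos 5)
Your proof is correct and follows essentially the same route as the paper's: both express a general element of $\clq_\clz$ as $\sum_i v_i \mathbb{S}(\cdot,z_i)$, pass via $\langle g,\psi\rangle = \langle T_\psi^* g, 1\rangle$ to $T_\psi^*$ acting diagonally on kernel functions, and reduce orthogonality to $\sum_i v_i\bar w_i = \langle\bm v,\bm w\rangle_{\mathbb{C}^m}=0$. The extra remarks about linear independence and the degenerate case $\psi=0$ are sound but not needed beyond what the paper records.
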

\begin{proof}
Given $\bm{v} \in \mathbb{C}^m$, observe that
\[
\sum_{i=1}^{m} v_i \mathbb{S}(\cdot, z_i) \perp \psi,
\]
if and only if
\[
\begin{split}
0 & = \Big\langle \sum_{i=1}^{m} v_i \mathbb{S}(\cdot, z_i), \psi \Big\rangle_{H^2(\T^n)}
\\
& = \Big\langle T^*_{\psi} \Big(\sum_{i=1}^{m} v_i \mathbb{S}(\cdot, z_i)\Big), 1 \Big\rangle_{H^2(\T^n)}
\\
& = \Big\langle\sum_{i=1}^{m} v_i \overline{\psi(z_i)} \mathbb{S}(\cdot, z_i), 1 \Big\rangle_{H^2(\T^n)}
\\
& = \sum_{i=1}^{m} v_i \bar{w}_i
\\
& = \langle \bm{v}, \bm{w} \rangle_{\mathbb{C}^m}.
\end{split}
\]
This completes the proof of the lemma.
\end{proof}

Now we are ready for examples of interpolation on $\D^n$, $n \geq 2$. First, we elaborate on the construction of the $3$-point interpolation problem. Consider the following setting:

\begin{enumerate}
\item $\{\bm{b}_0, \bm{b}_1, \bm{b}_2\}$ is an orthogonal basis for $\mathbb{C}^3$, where
\[
\begin{cases}
\bm{b}_0 = (1, 1, 1)
\\
\bm{b}_1 = (\zeta_{11}, \zeta_{12}, \zeta_{13})
\\
\bm{b}_2 = (\zeta_{21}, \zeta_{22}, \zeta_{23}).
\end{cases}
\]

\item $\|\bm{b}_1\|_{\mathbb{C}^3}, \|\bm{b}_2\|_{\mathbb{C}^3} \geq 1$.
\item $\{z_1, z_2, z_3\} \subset \D^n$ are three distinct points such that
\[
\begin{cases}
z_1 = (\zeta_{11}, \zeta_{21}, \tilde{z}_1)
\\
z_2 = (\zeta_{12}, \zeta_{22}, \tilde{z}_2)
\\
z_3 = (\zeta_{13}, \zeta_{23}, \tilde{z}_3),
\end{cases}
\]
for some (arbitrary) $\tilde{z}_1, \tilde{z}_1, \tilde{z}_3 \in \D^{n-2}$.
\item $\bm{w} = (w_1, w_2, w_3) \in \D^3$ such that $\|\bm{w}\|_{\mathbb{C}^3} \leq \frac{1}{\sqrt 3}$.
\end{enumerate}

\noindent \textsf{Claim:} There exists $\vp \in \cls(\D^n)$ such that $\vp(z_i) = w_i$ for all $i=1, 2,3$.

\noindent Here is how the proof of the claim goes: In view of Theorem \ref{thm: quantitative inter}, it is enough to prove that
\[
\text{dist}_{L^1(\T^n)}\Big(\frac{\bar \psi}{\|\psi\|_2^2}, \widetilde{\clm}_{\clq_\clz}\Big) \geq 1,
\]
where $\psi = X_{\clz, \clw} (P_{\clq_\clz} 1)$ and
\[
\widetilde \clm_{\clq_\clz} = ({\clq}^{conj}_{\clz} \ominus \{\bar \psi\}) \dotplus (\clm_n \dotplus H^2_0(\T^n)).
\]
Recall that $X_{\clz, \clw} \in \clb(\clq_\clz)$ is defined by
\[
X_{\clz, \clw}^* \mathbb{S}(\cdot, z_i) = \bar{w}_i \mathbb{S}(\cdot, z_i),
\]
for all $i=1, \ldots, m$. Also recall the crucial fact that (see  Theorem \ref{thm: quantitative inter})
\[
\psi(z_i) = w_i \qquad (i=1, \ldots, m).
\]
Using the conjugation invariance property of $L^1$-norm (that is, $\|f\|_{L^1(\T^n)} = \|\bar f\|_{L^1(\T^n)}$ for all $f \in L^1(\T^n)$), we infer that
\[
\text{dist}_{L^1(\T^n)}\Big(\frac{\bar \psi}{\|\psi\|_2^2}, \widetilde{\clm}_{\clq_\clz}\Big) = \text{dist}_{L^1(\T^n)}\Big(\frac{\psi}{\|\psi\|_2^2}, {\widetilde{\clm}_{\clq_\clz}}^{conj}\Big),
\]
where (recall that $\clm_n^{conj} = \clm_n$)
\[
{\widetilde{\clm}_{\clq_\clz}}^{conj} = (\clq_{\clz} \ominus \{\psi\}) \dotplus (\clm_n \dotplus {H^2_0(\T^n)}^{conj}).
\]
It will be convenient (as well as enough) to prove that
\[
\text{dist}_{L^1(\T^n)}\Big(\frac{\psi}{\|\psi\|_2^2}, {\widetilde{\clm}_{\clq_\clz}}^{conj}\Big) \geq 1.
\]
Also, to avoid notational confusion, we use $\{Z_1, \ldots, Z_n\}$ for the variables of $\mathbb{C}^n$. By the definition of Szeg\"{o} kernel, we have
\begin{equation}\label{eqn: series of kernel}
\begin{cases}
\mathbb{S}(\cdot, z_1) = 1 + \bar{\zeta}_{11} Z_1 + \bar{\zeta}_{21} Z_2 + \cdots
\\
\mathbb{S}(\cdot, z_2) =  1 + \bar{\zeta}_{12} Z_1 + \bar{\zeta}_{22} Z_2 + \cdots
\\
\mathbb{S}(\cdot, z_3) =  1 + \bar{\zeta}_{13} Z_1 + \bar{\zeta}_{23} Z_2 + \cdots .
\end{cases}
\end{equation}
We will need to prove the following inequality
\[
\Big\|\frac{\psi}{\|\psi\|_2^2} + f\Big\|_{L^1(\T^n)} \geq 1 \qquad (f \in {\widetilde{\clm}_{\clq_\clz}}^{conj}).
\]
Since $\psi(z_i) = w_i$ for all $i=1, \ldots, m$, in view of Lemma \ref{lemma: QZ minus psi}, an element $f \in {\widetilde{\clm}_{\clq_\clz}}^{conj}$ admits the following representation
\[
f = \sum_{i=1}^{3} v_i \mathbb{S}(\cdot, z_i) + \tilde f,
\]
for some $\bm{v} \in \bm{w}^\perp$ and $\tilde f \in \clm_n \dotplus {H^2_0(\T^n)}^{conj}$. Therefore, for each $f \in {\widetilde{\clm}_{\clq_\clz}}^{conj}$, by Lemma \ref{lemma: repr of psi}, we conclude that
\[
\frac{\psi}{\|\psi\|_2^2} + f = \sum_{i=1}^{3} \frac{w_i}{\|\bm{w}\|_{\mathbb{C}^3}^2} \mathbb{S}(\cdot, z_i) + \sum_{i=1}^{3} v_i \mathbb{S}(\cdot, z_i) + \tilde f,
\]
for some $\bm{v} \in \bm{w}^\perp$ and $\tilde f \in \clm_n \dotplus {H^2_0(\T^n)}^{conj}$. For each $\bm{v} \in \bm{w}^\perp$, we set
\[
F_{\bm{v}} =  \frac{1}{\|\bm{w}\|_{\mathbb{C}^3}^2} \bm{w} \oplus \bm{v}.
\]
It is important to keep in mind that $\bm{v}$ and $\tilde f$ depend on $f$. By assumption, $\|\bm{w}\|_{\mathbb{C}^3} \leq \frac{1}{\sqrt 3}$, and hence
\[
\|F_{\bm{v}}\|_{\mathbb{C}^3} \geq \sqrt 3.
\]
Using the kernel functions' power series expansion as in \eqref{eqn: series of kernel}, we find
\[
\begin{split}
\frac{\psi}{\|\psi\|_2^2} + f & = \sum_{i=1}^{3} \frac{w_i}{\|\bm{w}\|_{\mathbb{C}^3}^2} \mathbb{S}(\cdot, z_i) + \sum_{i=1}^{3} v_i \mathbb{S}(\cdot, z_i) + \tilde f
\\
& = \Big(\langle F_{\bm{v}}, \bm{b}_0 \rangle_{\mathbb{C}^3} 1 + \langle F_{\bm{v}}, \bm{b}_1 \rangle_{\mathbb{C}^3} Z_1 + \langle F_{\bm{v}}, \bm{b}_2 \rangle_{\mathbb{C}^3} Z_2 + \cdots \Big) + \tilde f.
\end{split}
\]
There exists $i \in \{0,1,2\}$ such that
\[
| \langle F_{\bm{v}}, \bm{b}_i \rangle_{\mathbb{C}^3}| \geq 1.
\]
If not, suppose $| \langle F_{\bm{v}}, \bm{b}_i \rangle_{\mathbb{C}^m}| < 1$ for all $i=0, 1,2$. Then
\[
\Big|\Big\langle F_{\bm{v}}, \|\bm{b}_i\|_{\mathbb{C}^m} \Big(\frac{1}{\|\bm{b}_i\|_{\mathbb{C}^3}} \bm{b}_i \Big) \Big\rangle\Big| < 1,
\]
implies
\[
\Big|\Big\langle F_{\bm{v}}, \Big(\frac{1}{\|\bm{b}_i\|_{\mathbb{C}^3}} \bm{b}_i \Big) \Big\rangle\Big| < \frac{1}{\|\bm{b}_i\|_{\mathbb{C}^3}} \leq 1,
\]
for all $i=0, 1,2$. Since
\[
\Big\{\frac{1}{\|\bm{b}_i\|_{\mathbb{C}^3}} \bm{b}_i\Big\}_{i=0}^2,
\]
is an orthonormal basis for $\mathbb{C}^3$, the above inequality contradicts the fact that $\|F_{\bm{v}}\|_{\mathbb{C}^3} \geq \sqrt 3$. On the other hand, since $\tilde f$ does not have an analytic part and
\[
\langle \tilde f, 1 \rangle_{L^2(\T^n)} = 0,
\]
it follows that
\[
\Big\langle \frac{\psi}{\|\psi\|_2^2} + f, g \Big\rangle_{L^2(\T^n)} =
\begin{cases}
\langle  F_{\bm{v}}, \bm{b}_0 \rangle_{\mathbb{C}^3} & \mbox{if } g = 1 \\
\langle F_{\bm{v}}, \bm{b}_1 \rangle_{\mathbb{C}^3} & \mbox{if } g = Z_1 \\
\langle F_{\bm{v}}, \bm{b}_2 \rangle_{\mathbb{C}^3} & \mbox{if } g = Z_2.
\end{cases}
\]
Therefore
\[
\Big|\Big\langle \frac{\psi}{\|\psi\|_2^2} + f, g \Big\rangle_{L^2(\T^n)}\Big| \geq 1,
\]
for some $g \in \{1, Z_1, Z_2\}$. On the other hand (see the duality \eqref{eqn: duality})
\[
\Big\langle \frac{\psi}{\|\psi\|_2^2} + f, g \Big\rangle_{L^2(\T^n)} =
\begin{cases}
\chi_1\Big(\frac{\psi}{\|\psi\|_2^2} + f\Big) & \mbox{if } g = 1 \\
\chi_{\bar{Z}_1} \Big(\frac{\psi}{\|\psi\|_2^2} + f\Big) & \mbox{if } g = Z_1 \\
\chi_{\bar{Z}_2}\Big(\frac{\psi}{\|\psi\|_2^2} + f\Big) & \mbox{if } g = Z_2.
\end{cases}
\]
However
\[
\|\chi_{\bar{g}}\| = 1,
\]
for all $g \in \{1, {Z}_1, {Z}_2\}$, and hence
\[
\Big\|\frac{\psi}{\|\psi\|_2^2} + f\Big\|_{L^1(\T^n)} \geq 1,
\]
for all $f \in \widetilde{\clm}_{\clq_\clz}^{conj}$. This completes the proof of the claim. Furthermore, in this case, we can specify an explicit interpolating function. Note that $\{\bm{e}_i\}_{i=0}^2$ is an orthonormal basis for $\mathbb{C}^3$, where
\[
\bm{e}_i = \frac{1}{\|\bm{b}_i\|_{\mathbb{C}^3}}\bm{b}_i,
\]
for all $i=0,1,2$. We write
\[
\bm{w} = \sum_{i=0}^2 \alpha_i \bm{e}_i,
\]
and set
\[
\vp(Z) = \frac{\alpha_0}{\|\bm{b}_0\|_{\mathbb{C}^3}} + \frac{\alpha_1}{\|\bm{b}_1\|_{\mathbb{C}^3}} Z_1 +  \frac{\alpha_2}{\|\bm{b}_2\|_{\mathbb{C}^3}} Z_2,
\]
for all $Z = (Z_1, \ldots, Z_n) \in \D^n$. Since $\|\bm{w}\|_{\mathbb{C}^3}^2 \leq \frac{1}{3}$, it follows that
\[
\sum_{i=0}^2 |\alpha_i|^2 \leq \frac{1}{3},
\]
and hence, by the Cauchy-Schwarz inequality, we conclude that
\[
\sum_{i=0}^2 |\alpha_i| \leq 1.
\]
Moreover, since $\|\bm{b}_i\|_{\mathbb{C}^3} \geq 1$ for all $i=0,1,2$, for each $Z \in \D^n$, we infer that
\[
\begin{split}
|\vp(Z)| & = \Big|\frac{\alpha_0}{\|\bm{b}_0\|_{\mathbb{C}^3}} + \frac{\alpha_1}{\|\bm{b}_1\|_{\mathbb{C}^3}} Z_1 +  \frac{\alpha_2}{\|\bm{b}_2\|_{\mathbb{C}^3}} Z_2 \Big|
\\
& \leq \frac{|\alpha_0|}{\|\bm{b}_0\|_{\mathbb{C}^3}} + \frac{|\alpha_1|}{\|\bm{b}_1\|_{\mathbb{C}^3}} |Z_1| +  \frac{|\alpha_2|}{\|\bm{b}_2\|_{\mathbb{C}^3}} |Z_2|
\\
& \leq \sum_{i=0}^2 |\alpha_i|
\\
& \leq 1,
\end{split}
\]
and consequently, $\vp \in \cls(\D^n)$. Finally, we compute
\[
\begin{split}
\vp(z_i) & = \frac{\alpha_0}{\|\bm{b}_0\|_{\mathbb{C}^3}} + \frac{\alpha_1}{\|\bm{b}_1\|_{\mathbb{C}^3}} \zeta_{1i} +  \frac{\alpha_2}{\|\bm{b}_2\|_{\mathbb{C}^3}} \zeta_{2i}
\\
& = \alpha_0 \pi_i(\bm{e}_0) + \alpha_1 \pi_i(\bm{e}_1) + \alpha_2 \pi_i(\bm{e}_2)
\\
& = \pi_i (\sum_{j=0}^2 \alpha_j \bm{e}_j)
\\
& = \pi_i(\bm{w})
\\
& = w_i,
\end{split}
\]
for all $i=1,2,3$. Therefore, $\vp$ is a solution to the interpolation problem with data $\{z_i\}_{i=1}^3 \subset \D^n$ and $\{w_i\}_{i=1}^3 \subset \D$.

For general $m$-point interpolation, $m \geq 2$, the same proof concept applies, but the computation would be more laborious. We only report the general result and leave the other details to the interested readers.

\begin{theorem}\label{thm: eg interpolation}
Let $n \geq 2$, $m \geq 3$, and suppose $n \geq m-1$. Let $\{z_i\}_{i=1}^m \subset \D^n$ be $m$ distinct points, $\{w_i\}_{i=1}^m \subset \D$ be $m$ scalars, and let $\{\bm{b}_i\}_{i=0}^{m-1} \subset \mathbb{C}^m$, where $\bm{b}_0 = (1, \ldots, 1)$, and
\[
\bm{b}_j= (\zeta_{j1}, \zeta_{j2}, \ldots, \zeta_{jm}),
\]
for all $j=1, \ldots, m-1$. Assume that:
\begin{enumerate}
\item $\{\bm{b}_i\}_{i=0}^{m-1}$ is an orthogonal basis for $\mathbb{C}^m$.
\item $\|\bm{b}_i\| _{\mathbb{C}^m} \geq 1$ for all $i=1, \ldots, m-1$.
\item $z_j = (\zeta_{1j}, \zeta_{2j}, \ldots, \zeta_{m-1,j}, \tilde{z}_j)$, where $\tilde{z}_j \in \D^{n-m+1}$ arbitrary, and $j=1, \ldots,m$.
\item $\|\bm{w}\|_{\mathbb{C}^m} \leq \frac{1}{\sqrt n}$, where $\bm{w} = (w_1, \ldots, w_m)$.
\end{enumerate}

Then there exists $\vp \in \cls(\D^n)$ such that
\[
\vp(z_i) = w_i,
\]
for all $i=1, \ldots, m$. Furthermore, $\vp$ can be chosen as a polynomial.
\end{theorem}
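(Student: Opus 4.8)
The plan is to run, for general $m$, the computation carried out above in the $3$-point case, with $\mathbb{C}^3$ replaced by $\mathbb{C}^m$ and the two coordinate functions $Z_1,Z_2$ replaced by $Z_1,\dots,Z_{m-1}$ --- this is exactly what the hypothesis $n\ge m-1$ buys us, namely that $\D^n$ has at least $m-1$ free coordinate directions. First I would set $\clq_\clz=\operatorname{span}\{\bS(\cdot,z_i):i=1,\dots,m\}$, let $X_{\clz,\clw}\in\clb(\clq_\clz)$ be the module map determined by $X_{\clz,\clw}^{*}\bS(\cdot,z_j)=\bar w_j\bS(\cdot,z_j)$ (Lemma~\ref{eqn: Y star S}), and put $\psi:=X_{\clz,\clw}(P_{\clq_\clz}1)\in\clq_\clz\subset H^\infty(\D^n)$; we may assume $\bm w\neq 0$ (otherwise $\vp\equiv 0$ works), so $\psi\neq 0$. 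By Theorem~\ref{thm: quantitative inter} it is enough to show
\[
\operatorname{dist}_{L^1(\T^n)}\Big(\tfrac{\bar\psi}{\|\psi\|_2^2},\tilde\clm_{\clq_\clz}\Big)\ge 1,
\]
and by the conjugation-invariance of $\|\cdot\|_{L^1(\T^n)}$ and $\clm_n^{conj}=\clm_n$ this is the same as
\[
\Big\|\tfrac{\psi}{\|\psi\|_2^2}+f\Big\|_{L^1(\T^n)}\ge 1\qquad\big(f\in\tilde\clm_{\clq_\clz}^{conj}:=(\clq_\clz\ominus\{\psi\})\dotplus(\clm_n\dotplus{H^2_0(\T^n)}^{conj})\big).
\]
Throughout I would use that $\psi(z_i)=w_i$ for all $i$, which Theorem~\ref{thm: quantitative inter} supplies.

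For the structural reduction, Lemma~\ref{lemma: repr of psi} applied to $\psi$ and Lemma~\ref{lemma: QZ minus psi} applied to an arbitrary $f\in\tilde\clm_{\clq_\clz}^{conj}$ let me write
\[
\frac{\psi}{\|\psi\|_2^2}+f=\sum_{i=1}^m (F_{\bm v})_i\,\bS(\cdot,z_i)+\tilde f,
\]
with $\tilde f\in\clm_n\dotplus{H^2_0(\T^n)}^{conj}$ (no analytic part, zero constant term), $\bm v\in\bm w^{\perp}$, and $F_{\bm v}:=\|\bm w\|_{\mathbb{C}^m}^{-2}\,\bm w\oplus\bm v\in\mathbb{C}\bm w\oplus\bm w^{\perp}=\mathbb{C}^m$; hence $\|F_{\bm v}\|_{\mathbb{C}^m}^{2}=\|\bm w\|_{\mathbb{C}^m}^{-2}+\|\bm v\|_{\mathbb{C}^m}^{2}\ge\|\bm w\|_{\mathbb{C}^m}^{-2}\ge m$ by the smallness hypothesis on $\bm w$. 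Expanding $\bS(\cdot,z_i)=1+\sum_{l=1}^{n}\overline{\pi_l(z_i)}Z_l+\cdots$ and using $\pi_l(z_i)=\zeta_{li}$ for $l\le m-1$, the constant term of $\psi/\|\psi\|_2^2+f$ is $\langle F_{\bm v},\bm b_0\rangle_{\mathbb{C}^m}$ and the coefficient of $Z_l$ is $\langle F_{\bm v},\bm b_l\rangle_{\mathbb{C}^m}$ for $l=1,\dots,m-1$.

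Then comes the pigeonhole step. Since $\{\bm e_j:=\bm b_j/\|\bm b_j\|_{\mathbb{C}^m}\}_{j=0}^{m-1}$ is an orthonormal basis of $\mathbb{C}^m$ and $\|\bm b_j\|_{\mathbb{C}^m}\ge 1$ for all $j$ (for $j=0$, $\|\bm b_0\|_{\mathbb{C}^m}=\sqrt m\ge 1$), the assumption $|\langle F_{\bm v},\bm b_j\rangle_{\mathbb{C}^m}|<1$ for every $j=0,\dots,m-1$ would give $\|F_{\bm v}\|_{\mathbb{C}^m}^2=\sum_{j=0}^{m-1}|\langle F_{\bm v},\bm e_j\rangle_{\mathbb{C}^m}|^2<m$, contradicting the bound above; so $|\langle F_{\bm v},\bm b_j\rangle_{\mathbb{C}^m}|\ge 1$ for some $j\in\{0,\dots,m-1\}$. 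As $\tilde f$ has no analytic part and $\langle\tilde f,1\rangle_{L^2(\T^n)}=0$, for the corresponding monomial $g\in\{1,Z_1,\dots,Z_{m-1}\}$ we get $\big|\chi_{\bar g}\big(\psi/\|\psi\|_2^2+f\big)\big|=|\langle F_{\bm v},\bm b_j\rangle_{\mathbb{C}^m}|\ge 1$ while $\|\chi_{\bar g}\|=1$, so the Banach-space identity $\|x\|_B=\sup\{|x^*(x)|:\|x^*\|\le1\}$ forces $\|\psi/\|\psi\|_2^2+f\|_{L^1(\T^n)}\ge 1$. Since $f\in\tilde\clm_{\clq_\clz}^{conj}$ was arbitrary, the distance estimate, and hence the existence of an interpolant $\vp\in\cls(\D^n)$, follows.

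Finally I would exhibit $\vp$ as a polynomial of degree $\le 1$: writing $\bm w=\sum_{j=0}^{m-1}\alpha_j\bm e_j$ and putting $\vp(Z)=\sum_{j=0}^{m-1}\|\bm b_j\|_{\mathbb{C}^m}^{-1}\alpha_j Z_j$ with $Z_0\equiv 1$, the smallness of $\bm w$ makes $\sum_j|\alpha_j|^2=\|\bm w\|_{\mathbb{C}^m}^2$ small enough that $\sum_j|\alpha_j|\le 1$ by Cauchy--Schwarz, and $\|\bm b_j\|_{\mathbb{C}^m}\ge 1$ then yields $|\vp(Z)|\le\sum_j|\alpha_j|\le 1$ on $\D^n$, i.e.\ $\vp\in\cls(\D^n)$, while a direct computation (exactly as in the $3$-point case) gives $\vp(z_i)=\pi_i\big(\sum_j\alpha_j\bm e_j\big)=\pi_i(\bm w)=w_i$. \emph{The main difficulty} is purely one of bookkeeping: one must check that the $\bm w^{\perp}$-part produced by Lemma~\ref{lemma: repr of psi} for $\psi$ and the one produced by Lemma~\ref{lemma: QZ minus psi} for $f$ amalgamate into a single $\bm v\in\bm w^{\perp}$, that the $m$ unit-norm functionals $\chi_1,\chi_{\bar Z_1},\dots,\chi_{\bar Z_{m-1}}$ correspond to the $m$ orthonormal vectors $\bm e_0,\dots,\bm e_{m-1}$, and that every estimate is uniform in $f$; there is no conceptual obstruction beyond the $3$-point model already carried out in the text.
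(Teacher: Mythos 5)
Your argument is exactly the one the paper intends: the paper proves the $m=3$ case in full detail and then says ``for general $m$-point interpolation, the same proof concept applies, but the computation would be more laborious,'' and you have carried out precisely that bookkeeping. The reduction via Theorem~\ref{thm: quantitative inter}, the amalgamation of the $\bm{w}^\perp$-components from Lemma~\ref{lemma: repr of psi} and Lemma~\ref{lemma: QZ minus psi}, the identification of the coefficient of $Z_l$ with $\langle F_{\bm v},\bm b_l\rangle_{\mathbb{C}^m}$, the pigeonhole step against the orthonormal basis $\{\bm e_j\}_{j=0}^{m-1}$, and the explicit degree-one interpolant are all correct and in line with the model computation.

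One small caution worth flagging: the step where you assert $\|F_{\bm v}\|_{\mathbb{C}^m}^2\ge\|\bm w\|_{\mathbb{C}^m}^{-2}\ge m$ actually requires $\|\bm w\|_{\mathbb{C}^m}\le 1/\sqrt{m}$, and the same bound is what makes the Cauchy--Schwarz estimate $\sum_j|\alpha_j|\le 1$ work for the explicit polynomial. The theorem as stated gives $\|\bm w\|_{\mathbb{C}^m}\le 1/\sqrt{n}$, which implies $1/\sqrt{m}$ only when $n\ge m$; in the borderline case $n=m-1$ it gives only $\|F_{\bm v}\|^2\ge m-1$, which does not close the contradiction. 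This is almost certainly a typo in the theorem statement (the $3$-point example in the text uses $1/\sqrt{3}=1/\sqrt{m}$, and the remark immediately after the theorem quotes yet another bound $1/n$), but you should state explicitly that your argument runs under $\|\bm w\|_{\mathbb{C}^m}\le 1/\sqrt{m}$ rather than silently substituting it for the hypothesis.
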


Evidently, there is no dearth of examples of data that meet the aforementioned conditions. The following remark elaborates on this:

\begin{remark}
If the number of variables $n (\geq 2)$ and the number of nodes $m (\geq 3)$ satisfies the condition $n \geq m-1$, and if one restricts the first $m-1$ slots of the coordinates of the interpolation nodes $\{z_i\}_{i=1}^m$ (so that the corresponding columns along with the constant vector $1$ forms a basis of $\mathbb{C}^m$) along with the norm bound on $\bm{w}$ as
\[
\|\bm{w}\|_{\mathbb{C}^m} \leq \frac{1}{\sqrt{n}},
\]
then one can ensure that interpolation will occur for any choice of $\{\tilde{z}_i\}_{i=1}^m \subset \D^{n-m+1}$. The relationship between the orthogonal set of vectors $\{\bm{b}_i\}_{i=1}^{m-1} \subset \mathbb{C}^m$ and interpolation nodes $\{z_i\}_{i=1}^m \subset \D^n$ can be represented by the formal matrix:
\[
\bordermatrix{
&\bm{b}_1 & \bm{b}_2 & \bm{b}_3 & \cdots & \bm{b}_{m-1} \cr
z_1 & \zeta_{11} & \zeta_{21} & \zeta_{31} & \cdots & \zeta_{m-1,1} & \cdots \cr
z_2 &\zeta_{12} & \zeta_{22} & \zeta_{32} & \cdots & \zeta_{m-1,2} & \cdots \cr
\vdots & \vdots & \vdots & \vdots & \vdots & \vdots & \ddots \cr
z_m & \zeta_{1m} & \zeta_{2m} & \zeta_{3m} & \cdots & \zeta_{m-1,m} & \cdots \cr
}.
\]
What this means is that there is an abundance of examples of interpolation in hand in several variables.
\end{remark}

We refer the reader to \cite{MP} for interpolation from operator algebraic perspective.

\section{Commutant lifting and examples}\label{subsect: example verification}

This section contains illustrations of commutant lifting on quotient modules of $H^2(\T^n)$, $n > 1$. Our first aim is to validate the examples in Section \ref{sec: examples of hom qm} using our commutant lifting theorem. We begin with a lemma.

\begin{lemma}\label{lemma: h 1 and 2 norm}
Let $h \in H^2(\T^n)$. Then $\|h\|_1 = \|h\|_2 = 1$ if and only if $h$ is inner.
\end{lemma}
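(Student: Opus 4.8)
## Proof Plan for Lemma (the $\|h\|_1 = \|h\|_2 = 1$ characterization of inner functions)

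The plan is to prove both directions using the relationship between the $L^1$, $L^2$ norms and the constant term. First, I would record the elementary observation that for any $h \in H^2(\T^n)$, Cauchy--Schwarz on $\T^n$ gives $\|h\|_1 \le \|h\|_2$, and also $|h(0)| = |\langle h, 1\rangle_{L^2(\T^n)}| = |\int_{\T^n} h \, d\mu| \le \|h\|_1$. So whenever $\|h\|_1 = \|h\|_2 = 1$ we automatically get $|h(0)| \le 1$; this will not be enough by itself, and the real content is in forcing $|h| = 1$ a.e.

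For the forward direction, suppose $\|h\|_1 = \|h\|_2 = 1$. The key trick is to exploit the equality case in Cauchy--Schwarz, but applied cleverly: consider the inequality
\[
1 = \|h\|_2^2 = \int_{\T^n} |h|^2 \, d\mu = \int_{\T^n} |h| \cdot |h| \, d\mu.
\]
I want to compare $\int |h|^2 \, d\mu$ with $\int |h| \, d\mu = \|h\|_1 = 1$. Since $\int_{\T^n}(|h|^2 - |h|)\, d\mu = 1 - 1 = 0$, and I claim $|h|^2 - |h| = |h|(|h| - 1)$ cannot have a sign change that integrates to zero unless $|h|$ is essentially constant — the cleanest route is: by Cauchy--Schwarz, $\|h\|_1 = \int |h|\cdot 1 \, d\mu \le \|h\|_2 \cdot \|1\|_2 = \|h\|_2$, with equality if and only if $|h|$ is a constant multiple of $1$ a.e., i.e. $|h|$ is constant a.e. Since $\|h\|_1 = \|h\|_2 = 1$ we are in the equality case, so $|h| = c$ a.e. for some constant $c \ge 0$; then $1 = \|h\|_2^2 = c^2$ forces $c = 1$, hence $|h| = 1$ a.e. on $\T^n$, which is precisely the statement that $h$ is inner (recall $h \in H^2(\T^n)$ with unimodular boundary values lies in $H^\infty(\D^n)$ and is inner by definition).

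For the converse, suppose $h$ is inner, so $|h| = 1$ a.e. on $\T^n$. Then $\|h\|_1 = \int_{\T^n} |h| \, d\mu = 1$ and $\|h\|_2^2 = \int_{\T^n} |h|^2 \, d\mu = 1$, so $\|h\|_1 = \|h\|_2 = 1$ immediately. I do not anticipate a genuine obstacle here; the only subtlety is making sure the equality case of Cauchy--Schwarz is invoked in the correct form (equality in $\int |h| \le \|h\|_2 \|1\|_2$ holding iff $|h|$ and $1$ are linearly dependent in $L^2$, i.e. $|h|$ constant a.e.), and that one notes $h \in H^2(\T^n)$ with $|h| \equiv 1$ a.e. indeed means $h \in H^\infty(\D^n)$ and hence is inner in the sense defined earlier in the paper. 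This lemma will then presumably be used to recognize when the function $\psi$ arising in the commutant lifting criterion is itself a lift attaining the norm.
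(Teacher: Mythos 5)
Your proof is correct, and it takes a genuinely different and more elementary route than the paper's. The paper proves the forward direction via $L^1$--$L^\infty$ duality and Hahn--Banach: it produces a norming functional $\chi_\vp$ with $\|\vp\|_\infty = 1$ and $\int_{\T^n} h\vp\,d\mu = 1$, argues that $\vp$ must be unimodular (a step which implicitly uses that a nonzero $H^2(\T^n)$ function is nonvanishing a.e.\ on $\T^n$, so that the strict inequality on the set where $|\vp|<1$ is meaningful), then decomposes $\vp = c\bar h \oplus g$ in $L^2$ to pin down $\vp = \bar h$ and conclude $h$ is inner. Your approach sidesteps all of this: the equality case of Cauchy--Schwarz applied to $\int |h|\cdot 1\,d\mu \le \|h\|_2\|1\|_2$ forces $|h|$ to be constant a.e., and $\|h\|_2 = 1$ fixes the constant to be $1$. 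This is shorter, uses no functional analysis, and avoids the nonvanishing fact the paper's strict inequality quietly relies on. The one point worth making explicit in a write-up (as you already do) is that $h \in H^2(\T^n)$ with $|h|=1$ a.e.\ on $\T^n$ automatically lies in $H^\infty(\D^n)$, so that ``inner'' in the paper's sense applies. The paper's argument, while heavier, has the small pedagogical merit of rehearsing the same duality machinery that powers Theorems~\ref{thm: new CLT 1} and~\ref{thm: perturbation}; your proof is the more natural one for the lemma in isolation.
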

\begin{proof}
Suppose $\|h\|_1 = \|h\|_2 = 1$. In particular, $h \in H^1(\T^n) \subseteq L^1(\T^n)$. By the Hahn–Banach theorem, there exists $\vp \in L^\infty(\T^n)$ such that $\|\vp\|_\infty = 1$ (as $\|h\|_1 = 1$) and
\[
\int_{\T^n} h \vp d\mu = \|h\|_1 = 1.
\]
In the above, we used the duality $(L^1(\T^n))^* \cong L^\infty(\T^n)$ once more. We claim that $\vp$ is unimodular. Indeed, if
\[
|\vp| < 1 \text{ on } A,
\]
for some measurable set $A \subseteq \T^n$ such that $\mu(A) > 0$, then
\[
\begin{split}
1 & = \Big|\int_{\T^n} h \vp d\mu\Big|
\\
& \leq \Big|\int_{A^c} h \vp d\mu\Big| + \Big|\int_{A} h \vp d\mu\Big|
\\
& \leq \int_{A^c} |h| |\vp| d\mu + \int_{A} |h| |\vp| d\mu
\\
& < \int_{A^c} |h| d\mu + \int_{A} |h| d\mu
\\
& = \|h\|_1,
\end{split}
\]
that is, $1 < \|h\|_1$, a contradiction. Since $h \in H^2(\D^n) \subseteq L^2(\T^n)$, we find a scalar $c$ and a function $g \in L^2(\T^n)$ such that
\[
\vp = c \bar h \oplus g.
\]
Observe that $\langle \bar h, g \rangle = \langle h, \bar g \rangle = 0$. Therefore
\[
\begin{split}
1 & = \int_{\T^n} h \vp d\mu
\\
& = \int_{\T^n} h (c \bar h \oplus g) d\mu
\\
& = \langle h, \bar c h \oplus \bar g \rangle_{L^2(\T^n)}
\\
& = c,
\end{split}
\]
and hence, $\vp = \bar h \oplus g$. Then
\[
\begin{split}
1 + \|g\|_2^2 & = \|h\|_2^2 + \|g\|_2^2
\\
& = \|\vp\|_2^2
\\
& \leq \|\vp\|_\infty^2
\\
& =1,
\end{split}
\]
implies that $g = 0$, and hence $\vp = \bar h$. Since $\vp \in L^\infty(\T^n)$, it follows that $h \in H^\infty(\D^n)$ is an inner function. The converse simply follows from the integral representation of norms on $H^2(\T^n)$ and $H^1(\T^n)$ and the fact that $|h| = 1$ a.e. on $\T^n$.
\end{proof}

Now we follow the setting of Corollary \ref{cor: hom poly fail}: For a fixed $m \in \mathbb{N}$, we consider the homogeneous quotient module
\[
\clq_m = \bigoplus_{t=0}^m H_t,
\]
a homogeneous polynomial $p \in \clq_m$ as
\[
p = \sum_{|k|=m} a_k z^k,
\]
with $\|p\|_2 = 1$, and that $a_k, a_l \neq 0$ for some $k \neq l$ in $\Z_+^n$. We know, by Theorem \ref{thm: new CLT 1}, that $S_p$ is liftable if and only if
\[
X_{\clq_m} (f) = \int_{\T^n} \psi f d\mu \qquad (f \in \clm_{\clq_m}),
\]
defines a contraction on $(\clm_{\clq_m}, \|\cdot\|_1)$, where $\psi = S_p(P_{\clq_m} 1)$. Since $1$ and $p$ are in $\clq_m$, it follows that $\psi = p$, and hence
\[
\begin{split}
X_{\clq_m} (\bar{p}) & = \int_{\T^n} \bar{p} \psi d\mu
\\
& = \int_{\T^n} |p|^2 d\mu
\\
& = 1.
\end{split}
\]
However
\[
\|\bar p\|_1 = \|p\|_1 < 1.
\]
Indeed, since $a_k, a_l \neq 0$, Lemma \ref{lem: inner poly} ensures that $p$ is not inner. This, together with the fact that $\|p\|_2 = 1$ and Lemma \ref{lemma: h 1 and 2 norm} completes the proof of the claim. Therefore, $X_{\clq_m}$ on $(\clm_{\clq_m}, \|\cdot\|_1)$ is not a contraction, and hence $S_p$ is not liftable. As a result, we recover Corollary \ref{cor: hom poly fail} using Theorem \ref{thm: new CLT 1}.

The idea used in the preceding example can be extended to provide further nontrivial examples of module maps that do not admit any lift. The following is an example, and this time we will use Theorem \ref{thm: new CLT 1} directly to prove that such a module map does not lift. Let $n > 1$. Consider the submodule
\[
\cls = z_1 \cdots z_n H^2(\T^n).
\]
We will be working on the corresponding quotient module $\clq = \cls^\perp$. Clearly
\[
\clq = \ker \Big(\prod_{i=1}^{n} T_{z_i}^*\Big).
\]
We observe that
\[
\clq = H^2_{{z}_1}(\T^n) + \cdots + H^2_{{z}_n}(\T^n),
\]
where $H^2_{{z}_i}(\T^n)$, $i=1, \ldots, n$, is the closed subspace of $H^2(\T^n)$ of functions that are independent of the $z_i$ variable, or equivalently
\[
H^2_{z_i}(\T^n) = \ker T_{z_i}^*.
\]
Indeed, it is clear that $H^2_{{z}_1}(\T^n) + \cdots + H^2_{{z}_n}(\T^n) \subseteq \clq$. Let $f \in \ker (\prod_{i=1}^{n} T_{z_i}^*)$, and suppose
\[
f = \sum_{k \in \Z_+^n} a_k z^k.
\]
Then
\[
\begin{split}
0 &= T_{z_1 \cdots z_n}^* f
\\
& = P_{H^2(\T^n)} (\sum_{k \in \Z_+^n} a_k \bar{z}_1 \cdots \bar{z}_n z^k),
\end{split}
\]
implies
\[
\sum_{k \in \Z_+^n} a_k \bar{z}_1 \cdots \bar{z}_n z^k \in (H^2(\T^n))^\perp.
\]
In other words, if $a_k \neq 0$ for some $k = (k_1, \ldots, k_n) \in \Z_+^n$, then we must have $k_i = 0$ for some $i=1, \ldots, n$. Therefore, there exists $f_i \in H^2_{z_i}(\T^n)$, $i=1, \ldots, n$, such that $f = f_1 + \cdots + f_n$. This proves the claim. Now for each $i=1, \ldots, n$, set
\[
\zeta_i:= \prod_{j\neq i} z_j,
\]
and pick inner function $\vp_i \in H^2_{z_i}(\T^n)$. Let $z_0 \in \T^n$ and suppose $\vp_i(z_0)$ is well-defined and
\[
|\vp_i(z_0)| = 1,
\]
for all $i=1, \ldots, n$. Choose $\{\alpha_1, \ldots, \alpha_n\} \subset \mathbb{R}_{\geq 0}$ such that $\alpha_p, \alpha_q \neq 0$ for some $p\neq q$, and
\[
\sum_{i=1}^{n} \alpha_i^2 = 1.
\]
The preceding set of assumptions ensures that
\[
\sum_{i=1}^{n} \alpha_i >1.
\]
Finally, define $\vp \in \cls(\D^n)$ by
\[
\vp = \sum_{i=1}^{n} \alpha_i \bar{\beta}_i \zeta_i \vp_i,
\]
where $\beta_i = (\zeta_i \vp_i)(z_0)$ for all $i=1, \ldots, n$. We claim that $\vp$ is not inner. Indeed, since
\[
\vp(z_0) = \sum_{i=1}^{n} \alpha_i \bar{\beta}_i \beta_i,
\]
and $|\beta_i| = 1$, it follows that
\[
\vp(z_0) = \sum_{i=1}^{n} \alpha_i > 1.
\]
Therefore, there exists $r \in (0,1)$ such that (note that $\vp$ is well-defined at $z_0$)
\[
|\vp(rz_0)| > 1,
\]
and hence, by the maximum modulus theorem, we conclude that $\|\vp\|_\infty > 1$, which completes the proof of the claim. Next, we claim that $S_\vp$ is a contraction. Fix $f \in \clq$. For each $i \in \{1, \ldots, n\}$, we have
\[
\clq \ominus H^2_{z_i}(\T^n) = (\ker T_{z_i}^*)^\perp \cap \clq = \{z_i g \in \clq: g \in H^2(\T^n)\},
\]
and hence there exist $f_i \in H^2_{z_i}(\T^n)$ and $g_i \in H^2(\T^n)$ such that
\[
f = f_i \oplus z_i g_i \in H^2_{z_i}(\T^n) \oplus (\clq \ominus H^2_{z_i}(\T^n)).
\]
Then
\[
\begin{split}
S_{\zeta_i \vp_i} f & = S_{\zeta_i \vp_i} (f_i + z_i g_i)
\\
& = S_{\zeta_i \vp_i} f_i + P_{\clq}(\zeta_i \vp_i z_i g_i)
\\
& = S_{\zeta_i \vp_i} f_i + P_{\clq}(z_1 \cdots z_n \vp_i g_i)
\\
& = S_{\zeta_i \vp_i} f_i,
\end{split}
\]
as $z_1 \cdots z_n \vp_i g_i \in \cls$, and hence
\[
S_{\zeta_i \vp_i} f = S_{\zeta_i \vp_i} P_{H^2_{z_i}(\T^n)} f.
\]
Observe moreover that $\zeta_i \vp_i H^2_{z_i}(\T^n) \subseteq H^2_{z_i}(\T^n)$. In view of $H^2_{z_i}(\T^n) \subseteq \clq$, we conclude that $S_{\zeta_i \vp_i} P_{H^2_{z_i}(\T^n)} f = \zeta_i \vp_i P_{H^2_{z_i}(\T^n)} f$, which yields
\[
S_{\zeta_i \vp_i} f = \zeta_i \vp_i P_{H^2_{z_i}(\T^n)} f.
\]
Therefore, for $i \neq j$, we have
\[
\begin{split}
\langle S_{\zeta_i \vp_i} f, S_{\zeta_j \vp_j} f \rangle & = \langle \zeta_i \vp_i P_{H^2_{z_i}(\T^n)} f, \zeta_j \vp_j P_{H^2_{z_j}(\T^n)} f \rangle
\\
& = \langle T_{\zeta_j}^* \zeta_i \vp_i P_{H^2_{z_i}(\T^n)} f,  \vp_j P_{H^2_{z_j}(\T^n)} f \rangle
\\
& = \langle T_{z_i}^* z_j \vp_i P_{H^2_{z_i}(\T^n)} f,  \vp_j P_{H^2_{z_j}(\T^n)} f \rangle
\\
& = 0,
\end{split}
\]
as $z_j \vp_i P_{H^2_{z_i}(\T^n)} f \in \ker T_{z_i}^*$. So we find
\begin{equation}\label{eqn: S zeta phi ortho}
S_{\zeta_i \vp_i} f \perp S_{\zeta_j \vp_j} f \qquad (i \neq j).
\end{equation}
This allows us to compute the norm of $S_\vp f$ as follows (note that $|\beta_i|=1$ and $S_{\zeta_i \vp_i}$ is a contraction for all $i=1, \ldots, n$):
\[
\begin{split}
\|S_\vp f\|^2 & = \|\sum_{i=1}^{n} \alpha_i \bar{\beta}_i S_{\zeta_i \vp_i} f\|^2
\\
& = \sum_{i=1}^{n} \alpha_i^2 \|S_{\zeta_i \vp_i} f\|^2
\\
& \leq \sum_{i=1}^{n} \alpha_i^2 \|f\|^2
\\
& = \|f\|^2.
\end{split}
\]
This means that $S_\vp$ is a contraction. Our final claim is that $S_\vp$ is incapable of admitting any lift, which, in view of Theorem \ref{thm: new CLT 1}, is equivalent to the assertion that $X_{\clq} : (\clm_\clq, \|\cdot\|_1) \raro \mathbb{C}$ is not a contraction, where
\[
X_{\clq} f = \int_{\T^n} \psi f d\mu \qquad (f \in \clm_{\clq}),
\]
and
\[
\psi = S_\vp (P_{\clq} 1).
\]
Indeed, since $1, \vp \in \clq$, it follows that
\[
\psi = \vp.
\]
On the other hand, since $\bar{\vp} \in \clm_\clq$ (recall that $\clm_\clq = \clq^{conj} \dotplus (\clm_n \dotplus H^2_0(\T^n))$), we observe that
\[
\begin{split}
X_{\clq} \bar{\vp} & = \int_{\T^n} \vp \bar{\vp} d\mu
\\
& = \|\vp\|_{H^2(\T^n)}^2
\\
& = 1.
\end{split}
\]
Finally, applying \eqref{eqn: S zeta phi ortho} to $f = 1 \in \clq$, we obtain that
\[
\|\vp\|_{H^2(\T^n)}^2 = 1.
\]
This also follows from the equalities following  \eqref{eqn: S zeta phi ortho} corresponding to the choice $f=1$ along with the fact that $\zeta_i \vp_i$ is inner for all $i=1, \ldots, n$. Since $\vp$ is not an inner function, by Lemma \ref{lemma: h 1 and 2 norm}, we conclude that
\[
\|\bar{\vp}\|_1 = \|\vp\|_1 < 1,
\]
and hence $X_{\clq} : (\clm_\clq, \|\cdot\|_1) \raro \mathbb{C}$ is not a contraction. This proves the following result:

\begin{proposition}\label{prop: example of no lift}
Let $\{\alpha_1, \ldots, \alpha_n\} \subset \mathbb{R}_{\geq 0}$, suppose $\alpha_p, \alpha_q \neq 0$ for some $p\neq q$, and
\[
\sum_{i=1}^{n} \alpha_i^2 = 1.
\]
Let $z_0 \in \T^n$, and let $\vp_i$ be an inner function independent of the variable $z_i$, and suppose $\vp_i(z_0)$ is well-defined and
\[
|\vp_i(z_0)| = 1,
\]
for all $i=1, \ldots, n$. Define $\vp \in \cls(\D^n)$ by
\[
\vp = \sum_{i=1}^{n} \alpha_i \bar{\beta}_i \zeta_i \vp_i,
\]
where $\beta_i = (\zeta_i \vp_i)(z_0)$ and $\zeta_i:= \prod_{j\neq i} z_j$ for all $i=1, \ldots, n$. Then $S_\vp$ on $\clq = \ker (\prod_{i=1}^{n} T_{z_i}^*)$ does not admit any lift.
\end{proposition}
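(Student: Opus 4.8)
The plan is to show that the criterion of Theorem \ref{thm: new CLT 1} fails for the module map $X = S_\vp$. First I would record the structure of the quotient module. Since $\cls = z_1\cdots z_n H^2(\T^n)$ is a submodule, $\clq = \cls^\perp = \ker\big(\prod_{i=1}^n T_{z_i}^*\big)$, and a Fourier-coefficient computation gives
\[
\clq = H^2_{z_1}(\T^n) + \cdots + H^2_{z_n}(\T^n),
\]
where $H^2_{z_i}(\T^n) = \ker T_{z_i}^*$ is the closed subspace of functions in $H^2(\T^n)$ that do not depend on $z_i$. Each $\zeta_i\vp_i$ is a monomial times an inner function, hence lies in $H^\infty(\D^n)$, so $\vp = \sum_i \alpha_i\bar\beta_i\zeta_i\vp_i \in H^\infty(\D^n)$ and $S_\vp = P_\clq T_\vp|_\clq$ is well defined. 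Because $|\beta_i| = |(\zeta_i\vp_i)(z_0)| = 1$ for every $i$, evaluating at the boundary point $z_0$ gives $\vp(z_0) = \sum_i \alpha_i|\beta_i|^2 = \sum_i\alpha_i$, and the constraint $\sum_i\alpha_i^2 = 1$ with at least two nonzero nonnegative terms forces $\sum_i\alpha_i > 1$; by continuity of $\vp$ along the radius through $z_0$ together with the maximum modulus principle, $\|\vp\|_\infty > 1$, so $\vp$ is not inner.

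Next I would prove that $S_\vp$ is a contraction. The key identity is
\[
S_{\zeta_i\vp_i} f = \zeta_i\vp_i\, P_{H^2_{z_i}(\T^n)} f \qquad (f \in \clq).
\]
Writing $f = f_i \oplus z_i g_i$ along $\clq = H^2_{z_i}(\T^n) \oplus (\clq \ominus H^2_{z_i}(\T^n))$, the compression annihilates the second summand because $\zeta_i\vp_i z_i g_i = z_1\cdots z_n\vp_i g_i \in \cls$, while $\zeta_i\vp_i H^2_{z_i}(\T^n) \subseteq H^2_{z_i}(\T^n) \subseteq \clq$ handles the first. From this identity and the observation that $z_j\vp_i P_{H^2_{z_i}(\T^n)} f \in \ker T_{z_i}^*$, a short inner-product manipulation yields the orthogonality $S_{\zeta_i\vp_i} f \perp S_{\zeta_j\vp_j} f$ for $i\neq j$. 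Since each $S_{\zeta_i\vp_i}$ is a contraction and $\sum_i\alpha_i^2 = 1$, the Pythagorean identity gives $\|S_\vp f\|^2 = \sum_i\alpha_i^2\|S_{\zeta_i\vp_i} f\|^2 \le \|f\|^2$.

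Finally I would apply Theorem \ref{thm: new CLT 1} with $\psi = S_\vp(P_\clq 1)$. Since $1$ and $\vp$ both lie in $\clq$, we get $\psi = \vp$; taking $f = 1$ in the orthogonality relation together with the fact that each $\zeta_i\vp_i$ is inner (so $\|S_{\zeta_i\vp_i}1\|_2 = \|\zeta_i\vp_i\|_2 = 1$) gives $\|\vp\|_2^2 = \sum_i\alpha_i^2 = 1$. Hence $X_\clq(\bar\vp) = \int_{\T^n}|\vp|^2\,d\mu = 1$, whereas $\bar\vp \in \clm_\clq$ satisfies $\|\bar\vp\|_1 = \|\vp\|_1 \le \|\vp\|_2 = 1$ with strict inequality: equality would force $\|\vp\|_1 = \|\vp\|_2 = 1$, hence $\vp$ inner by Lemma \ref{lemma: h 1 and 2 norm}, contradicting the first paragraph. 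Thus $X_\clq\colon (\clm_\clq,\|\cdot\|_1) \to \mathbb{C}$ is not contractive, and Theorem \ref{thm: new CLT 1} shows $S_\vp$ is not liftable. The step I expect to be the main obstacle is the contractivity of $S_\vp$: establishing the identity $S_{\zeta_i\vp_i} f = \zeta_i\vp_i P_{H^2_{z_i}(\T^n)} f$ and the mutual orthogonality of the vectors $S_{\zeta_i\vp_i} f$ requires careful bookkeeping of the decomposition of $\clq$ relative to the various $\ker T_{z_i}^*$ and of how multiplication by $z_j$ interacts with the submodule $\cls$; once these structural facts are in hand, the norm estimate and the application of Theorem \ref{thm: new CLT 1} are routine.
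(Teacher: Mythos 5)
Your proposal is correct and follows essentially the same route as the paper: you reproduce the decomposition $\clq = H^2_{z_1}(\T^n)+\cdots+H^2_{z_n}(\T^n)$, use the maximum modulus argument at $z_0$ to show $\|\vp\|_\infty>1$ (so $\vp$ is not inner), establish the compression identity $S_{\zeta_i\vp_i}f = \zeta_i\vp_i P_{H^2_{z_i}(\T^n)}f$ together with the pairwise orthogonality of the $S_{\zeta_i\vp_i}f$ to get contractivity of $S_\vp$, and then apply Theorem \ref{thm: new CLT 1} and Lemma \ref{lemma: h 1 and 2 norm} to show $X_\clq(\bar\vp)=\|\vp\|_2^2=1$ while $\|\bar\vp\|_1<1$, exactly as in the paper. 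You also correctly write $\vp\in H^\infty(\D^n)$ with $\|\vp\|_\infty>1$, whereas the proposition's statement writes ``$\vp\in\cls(\D^n)$'', which the proof itself shows is not the case --- the crucial point being that $S_\vp$ is nevertheless a contraction on $\clq$.
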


In the above, saying that $\varphi_i(z_0)$ is well-defined refers to the fact that the radial limit of $\varphi_i$ exists at $z_0 \in \T^n$.

\section{Recovering Sarason's lifting theorem}\label{sec: recover Sarason}

In this section, we explain how to recover Sarason's commutant lifting theorem from Theorem \ref{thm: new CLT 1}. We will employ several tools (just like Sarason) that are commonly used and are valid only in one variable function theory. We start with the Beurling theorem \cite{Beurling}. Let $\clq \subsetneqq H^2(\T)$ be a closed subspace. Then $\clq$ is a quotient module if and only if there exists an inner function $\theta \in H^\infty(\D)$ such that $\clq = \clq_\theta$, where
\[
\clq_\theta := H^2(\T) \ominus \theta H^2(\T).
\]
Observe that $\theta H^2(\T)$ is a closed subspace (as $T_\theta$ is an isometry on $H^2(\T)$) and
\[
\clq_\theta \cong H^2(\T)/ \theta H^2(\T).
\]
Therefore, quotient modules of $H^2(\T)$ are inner function based - a typical one variable phenomenon (see Rudin \cite{WR} for counterexamples in several variables). In the following, we prove a key result.

\begin{lemma}\label{lemma: 1 var qm equal}
Let $\theta \in H^\infty(\D)$ be an inner function. Then
\[
{\clq}^{conj}_{\theta} \oplus z H^2(\T) = \overline{\theta} (z H^2(\T)).
\]
\end{lemma}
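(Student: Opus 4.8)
The plan is to verify the claimed identity $\clq_\theta^{conj} \oplus z H^2(\T) = \overline{\theta}(z H^2(\T))$ by a direct computation using the Beurling-type decomposition $H^2(\T) = \clq_\theta \oplus \theta H^2(\T)$ together with the identity $z H^2(\T) = H^2_0(\T)$ and the fact that $\theta$ is unimodular on $\T$. First I would rewrite everything inside $L^2(\T)$. Conjugating the Beurling decomposition gives ${H^2(\T)}^{conj} = \clq_\theta^{conj} \oplus \overline{\theta}\, {H^2(\T)}^{conj}$, and multiplying the whole of $H^2(\T) = \clq_\theta \oplus \theta H^2(\T)$ by $\overline\theta$ (which is a unitary multiplication operator on $L^2(\T)$ since $|\theta| = 1$ a.e.) yields $\overline{\theta} H^2(\T) = \overline{\theta}\,\clq_\theta \oplus H^2(\T)$. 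These orthogonal decompositions are the raw material.

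Next I would compute $\overline\theta(zH^2(\T))$. Since $zH^2(\T) = H^2_0(\T) = H^2(\T)\ominus\{1\}$, multiplying by the unitary $L_{\overline\theta}$ gives $\overline\theta(zH^2(\T)) = \overline\theta H^2(\T)\ominus\{\overline\theta\}$. Using the decomposition $\overline\theta H^2(\T) = \overline\theta\,\clq_\theta \oplus H^2(\T)$ and observing $\overline\theta \in \overline\theta\,\clq_\theta$ (because $\overline\theta = \overline\theta\cdot 1$ and $1 \in \clq_\theta$, since $\langle 1,\theta h\rangle = \langle \overline\theta, h\rangle = 0$ as $\overline\theta$ has no analytic part beyond... — more carefully: $1\perp \theta H^2(\T)$ because $\langle 1, \theta h\rangle_{L^2} = \overline{\theta(0)}\,\overline{h(0)}$ need not vanish, so I should instead argue $1 \in \clq_\theta$ directly from $1\in H^2(\T)$ and $P_{\theta H^2}1 = \theta(0)\theta$ giving $P_{\clq_\theta}1 = 1 - \theta(0)\theta \ne 1$ in general). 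I should be careful here: $1$ need not lie in $\clq_\theta$. The cleaner route is to subtract $\overline\theta$ from $\overline\theta\,\clq_\theta \oplus H^2(\T)$: since $\overline\theta = L_{\overline\theta}(1)$ and $1 = P_{\clq_\theta}1 + \theta(0)\theta$, we get $\overline\theta = L_{\overline\theta}(P_{\clq_\theta}1) + \theta(0)L_{\overline\theta}(\theta) = L_{\overline\theta}(P_{\clq_\theta}1) + \theta(0)$, so $\overline\theta(zH^2(\T)) = \big(\overline\theta\,\clq_\theta \oplus H^2(\T)\big) \ominus \mathbb{C}\overline\theta$. On the other hand $\overline\theta\,\clq_\theta = \clq_\theta^{conj}$, which I must justify: this is the classical identity $L_{\overline\theta}\clq_\theta = \{\overline{g}: g\in\clq_\theta\}$, a consequence of the fact that conjugation $C: f\mapsto \overline{z}\,\overline{f}\cdot$... actually $C f = \overline{\theta}\,\overline{z}\,\overline f$ maps $\clq_\theta$ onto itself (the standard conjugation on a model space); equivalently $\overline{\clq_\theta} = \overline\theta z^{-1}\clq_\theta$... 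I need to pin down the precise relation and it may differ by a $z$-factor from the naive guess, so this is where I will be most careful.

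The main obstacle, then, is the bookkeeping identity $\clq_\theta^{conj}\oplus zH^2(\T) = \overline\theta\,\clq_\theta \oplus \big(H^2(\T)\ominus\mathbb{C}\overline\theta\big)$ — i.e. showing the two proposed descriptions of the same subspace of $L^2(\T)$ agree, which amounts to checking $\clq_\theta^{conj} = \overline\theta\,\clq_\theta$ (equivalently $\overline\theta\,\clq_\theta^{conj}\oplus\mathbb{C} = zH^2$... the indices must match) and that the leftover pieces reconcile. I would handle this on the level of an orthonormal-basis / monomial computation: write out what $\overline\theta H^2(\T)\ominus\mathbb{C}\overline\theta$ and $\clq_\theta^{conj}\oplus zH^2(\T)$ look like, check both are orthogonal complements (in $L^2(\T)$, or within $\overline\theta H^2(\T) + H^2_0(\T)$) of the same one-dimensional-plus-analytic piece, and conclude equality. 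Concretely: $\overline\theta H^2(\T) \supseteq H^2(\T)$ and $\overline\theta H^2(\T) = \overline\theta\clq_\theta \oplus H^2(\T)$, while $\overline\theta\clq_\theta = \clq_\theta^{conj}$ (standard), so $\overline\theta H^2(\T) = \clq_\theta^{conj}\oplus H^2(\T) = \clq_\theta^{conj}\oplus\mathbb{C}\oplus zH^2(\T)$; removing $\mathbb{C}\overline\theta$ and noting $\overline\theta = (\text{component in }\clq_\theta^{conj}) + \theta(0)\cdot 1$ with nonzero $\mathbb{C}$-component $\theta(0)$ shows the resulting space is $\clq_\theta^{conj}\oplus zH^2(\T)$ exactly when $\theta(0)\neq 0$, and a separate limiting or direct argument covers $\theta(0)=0$ (where $\overline\theta\in\overline\theta z H^2$... ). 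Once the identity $\overline\theta\clq_\theta = \clq_\theta^{conj}$ is in hand and the constant-term bookkeeping is done, the lemma follows; I expect the write-up to be short modulo citing that model-space conjugation fact.
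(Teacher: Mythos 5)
Your key reduction rests on the identity $\overline{\theta}\,\clq_\theta = \clq_\theta^{\,conj}$, which you flag as uncertain ("may differ by a $z$-factor") and then nonetheless invoke as "standard." It is in fact off by exactly that $z$-factor, and this is fatal to the bookkeeping. The model-space conjugation is $C f = \theta\,\overline{z}\,\overline{f}$ (not $\overline{\theta}\,\overline{z}\,\overline{f}$), and $C(\clq_\theta) = \clq_\theta$ translates to $\clq_\theta^{\,conj} = z\,\overline{\theta}\,\clq_\theta$, equivalently $\overline{\theta}\,\clq_\theta = \overline{z}\,\clq_\theta^{\,conj}$. You can see the discrepancy already for $\theta = z$: there $\clq_\theta = \mathbb{C}$ and $\overline{\theta}\,\clq_\theta = \mathbb{C}\overline{z}$, while $\clq_\theta^{\,conj} = \mathbb{C}$. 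Consequently your intermediate claim $\overline{\theta} H^2(\T) = \clq_\theta^{\,conj}\oplus H^2(\T)$ is false (again check $\theta = z$: the left side is $\overline{z}H^2(\T)$, the right side is $H^2(\T)$), and the subsequent "remove $\mathbb{C}\overline{\theta}$" step does not land on $\clq_\theta^{\,conj}\oplus z H^2(\T)$. The detour about whether $1 \in \clq_\theta$ is a separate distraction that is never resolved and is not needed.

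The fix is short once the conjugation identity is stated correctly and you choose the decomposition $zH^2(\T) = z\clq_\theta \oplus z\theta H^2(\T)$ (rather than $H^2(\T)\ominus\mathbb{C}$, which forces you into the constant-term accounting that derailed the draft). Since $L_{\overline{\theta}}$ is unitary on $L^2(\T)$, multiplying the orthogonal decomposition by $\overline{\theta}$ gives
\[
\overline{\theta}\bigl(zH^2(\T)\bigr) = \overline{\theta}z\,\clq_\theta \;\oplus\; \overline{\theta}\,z\theta H^2(\T)
= z\,\overline{\theta}\,\clq_\theta \;\oplus\; zH^2(\T)
= \clq_\theta^{\,conj} \;\oplus\; zH^2(\T),
\]
using $\clq_\theta^{\,conj} = z\,\overline{\theta}\,\clq_\theta$ in the last step. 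This is genuinely a different route from the paper's, which avoids the conjugation operator entirely and instead verifies both inclusions by testing against monomials $z^m$, $\overline{z}^m$ -- showing $\theta\,\overline{g} \perp \overline{z}^m$ for $g \in \clq_\theta$ and $m \ge 0$, and $\overline{\theta}\,z f \perp z^m$ and $\perp(\theta H^2(\T))^{conj}$ for $f \in \clq_\theta$, $m\ge 1$. The paper's argument is more self-contained; yours, once corrected, is arguably slicker but offloads the real content onto the conjugation identity, which you would still need to prove or cite.
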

\begin{proof}
Let $g \in \clq_\theta$. Then $\bar{g} \in {\clq}^{conj}_{\theta}$, and hence, for each $m \geq 0$, we have
\[
\begin{split}
\langle \theta \bar{g}, \bar{z}^m \rangle_{L^2(\T)} & = \overline{\langle \bar{\theta} g, z^m \rangle}_{L^2(\T)}
\\
& = \overline{\langle g, \theta z^m \rangle}_{H^2(\T)}
\\
& = 0,
\end{split}
\]
as $\theta z^m \in \clq_\theta^{\perp}$. This implies $\theta {\clq}^{conj}_{\theta} \subseteq z H^2(\T)$ and hence ${\clq}^{conj}_{\theta} \subseteq \bar{\theta} (z H^2(\T))$. Also, for all $h \in H^2(\T)$, since
\[
z h = \overline{\theta}(\theta zh) = \overline{\theta}(z \theta h),
\]
it follows that $ z H^2(\T) \subseteq \overline{\theta} (z H^2(\T))$. Therefore
\[
{\clq}^{conj}_{\theta} \oplus z H^2(\T) \subseteq \overline{\theta} (z H^2(\T)).
\]
For the reverse inclusion, first we observe that for $f \in \clq_\theta$ and $m \geq 1$, since
\[
\begin{split}
\langle \overline{\theta} z f, z^m \rangle_{L^2(\T)} & = \langle z f, z \theta z^{m-1}\rangle_{H^2(\T)}
\\
& = \langle f, \theta z^{m-1} \rangle_{H^2(\T)}
\\
& = 0,
\end{split}
\]
it follows that $\bar{\theta} z \clq_\theta \perp z H^2(\T)$, and hence $\bar{\theta} z \clq_\theta \subseteq H^2(\T)^{conj}$. On the other hand, we know
\[
{H^2(\T)}^{conj} = {\clq}^{conj}_{\theta} \oplus (\theta H^2(\T))^{conj}.
\]
In view of this, for each $f \in \clq_\theta$ and $g \in H^2(\T)$, we further compute
\[
\begin{split}
\langle \overline{\theta} z f, \overline{\theta} \bar{g} \rangle_{L^2(\T)} & = \langle z f, \bar{g}\rangle_{L^2(\T)}
\\
& = \langle f, \bar{z} \bar{g} \rangle_{L^2(\T)}
\\
& = 0,
\end{split}
\]
which implies that $\overline{\theta} z \clq_\theta \perp (\theta H^2(\T))^{conj}$. As a result, $\bar{\theta} z \clq_\theta \subseteq \clq_\theta^{conj}$. Finally
\[
zH^2(\T) = z \clq_\theta \oplus z \theta H^2(\T),
\]
yields
\[
\begin{split}
\overline{\theta} z H^2(\T) & = \overline{\theta} z \clq_\theta + z H^2(\T)
\\
& \subseteqq {\clq}^{conj}_{\theta} + z H^2(\T),
\end{split}
\]
and completes the proof of the lemma.
\end{proof}

We are now almost ready to prove Sarason's commutant lifting theorem. Just one more result is required with regard to representations of polynomials as the sum of $H^\infty(\D)$-functions. Since this result holds true in several variables and is of independent interest, we prove it in the later part of this paper (see Proposition \ref{prop: poly decomp}).

\begin{theorem}\label{thm: recov sarason}
Contractive module maps on quotient modules of $H^2(\T)$ are liftable.
\end{theorem}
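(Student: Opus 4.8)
The plan is to deduce Sarason's theorem directly from Theorem~\ref{thm: new CLT 1}. Fix a proper quotient module $\clq \subsetneqq H^2(\T)$; by the Beurling theorem there is an inner function $\theta \in H^\infty(\D)$ with $\clq = \clq_\theta = H^2(\T) \ominus \theta H^2(\T)$. Let $X \in \clb_1(\clq)$ be a contractive module map and set $\psi = X(P_\clq 1)$. By Theorem~\ref{thm: new CLT 1}, it suffices to show that the functional $X_\clq f = \int_\T \psi f\, d\mu$ on $(\clm_\clq, \|\cdot\|_1)$ is contractive, where
\[
\clm_\clq = {\clq}^{conj}_\theta \dotplus (\clm_1 \dotplus H^2_0(\T)).
\]
Since $\clm_1 = \{0\}$ and $H^2_0(\T) = zH^2(\T)$, we have $\clm_\clq = {\clq}^{conj}_\theta \dotplus z H^2(\T)$. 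The crucial simplification is Lemma~\ref{lemma: 1 var qm equal}, which identifies this space with $\overline{\theta}(z H^2(\T))$, so in one variable $\clm_\clq$ has the compact form $\overline{\theta}(zH^2(\T))$ advertised in the introduction.

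Next I would translate the action of $X_\clq$ through this identification. Because $X$ commutes with $S_z$ and $\clq_\theta$ is a model space, the adjoint $X^*$ restricted to $\clq_\theta$ is (by the standard one-variable structure, or by the argument already used in the proof of Theorem~\ref{thm: new CLT 1}) of the form $P_\clq T_\varphi^*|_\clq$ for a candidate symbol only after we know liftability — so instead I would proceed purely at the level of the functional. Write a typical element of $\clm_\clq$ as $\overline{\theta}\, z h$ with $h \in H^2(\T)$. Using $\psi \in \clq_\theta$ and the reproducing/orthogonality relations, I would compute
\[
X_\clq(\overline{\theta}\, z h) = \int_\T \psi\, \overline{\theta}\, z h\, d\mu = \langle z h, \theta \overline{\psi}\rangle_{L^2(\T)},
\]
and massage the right-hand side into $\langle (\text{something involving } X), h\rangle$. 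The point is that $\theta\overline{\psi}$, being the conjugate of an element of $\theta\clq_\theta^\perp$-type data, pairs with $zH^2$ in a way controlled by the operator $P_\clq T_{\bar\theta z}|_\clq$ or its relatives; in fact one expects to recover the classical fact that $X$ corresponds to a Hankel-type functional whose norm is exactly the distance from a fixed symbol to $\theta H^\infty(\D)$, and the contractivity of $X$ forces this distance to be $\le 1$.

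The main obstacle, and the step I would spend the most care on, is establishing the norm inequality $\|X_\clq\| \le \|X\|$ rather than merely $\|X_\clq\| \le 1$ in some slack form: one must show that the $L^1$-operator norm of the functional $f \mapsto \int_\T \psi f\, d\mu$ on $\overline{\theta}(zH^2(\T))$ is dominated by the Hilbert-space operator norm $\|X\|$. Concretely, given a unit-norm $f = \overline{\theta}\, z h \in \clm_\clq$ (so $\|h\|_2$ can be made $\le$ a controlled quantity via inner factorization — here I would invoke the inner-outer factorization of $h$, writing $h = h_i h_o$ with $\|h_o\|_2 = \|h\|_2$, a genuinely one-variable tool), I would need to exhibit that $|X_\clq f|$ is bounded by the $L^1$-norm times $\|X\|$. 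This is where Sarason's original insight enters through the back door: the duality $(L^1)^* \cong L^\infty$, the factorization of $H^1$-functions as products of two $H^2$-functions, and the commutation relation $XS_z = S_z X$ combine to write $X_\clq f = \langle X u, v\rangle$ for suitable $u, v \in \clq$ with $\|u\|_2\|v\|_2 \le \|f\|_1$. The Proposition~\ref{prop: poly decomp} on decomposing polynomials as sums of $H^\infty(\D)$-functions is exactly what lets one reduce to a dense set of nice $f$ before passing to the limit. Once $|X_\clq f| \le \|X\|\,\|f\|_1$ is in hand, Theorem~\ref{thm: new CLT 1} delivers the lift $\varphi \in \cls(\D)$ with $X = S_\varphi$, completing the proof.
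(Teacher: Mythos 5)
Your plan tracks the paper's proof step for step: reduce via Theorem~\ref{thm: new CLT 1} to contractivity of $X_\clq$; rewrite $\clm_{\clq_\theta}=\overline{\theta}\bigl(zH^2(\T)\bigr)$ by Lemma~\ref{lemma: 1 var qm equal}; then use the $H^1 = H^2\cdot H^2$ factorization together with the commutation relation to recast $X_\clq f$ as an inner product $\langle Xu,v\rangle$ with $\|u\|_2\|v\|_2\le\|f\|_1$. The step you explicitly defer --- producing $u$ and $v$ --- is the crux, so let me record how the paper does it, since the details are not quite the obvious ones. After passing by density (only the contractive embedding $H^2\hookrightarrow H^1$ is needed here) to $f=\overline{\theta}p$ with $p\in\mathbb{C}[z]$, $p(0)=0$, one takes the inner--outer factorization $p=\eta h$ (so $\eta(0)=0$, as $h$ is outer), and then writes $p=(\eta\sqrt h)\,\sqrt h$, giving $\|\overline{\theta}p\|_1=\|p\|_1=\|\sqrt h\|_2^2$. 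The key point is that Lemma~\ref{lemma: 1 var qm equal} is invoked a \emph{second} time, in conjugated form: since $\eta\sqrt h$ vanishes at the origin, $\theta\,\overline{\eta\sqrt h}\in\clq_\theta\oplus\overline{zH^2(\T)}$, so $v:=\tilde h=P_{H^2(\T)}\bigl(\theta\,\overline{\eta\sqrt h}\bigr)$ lands in $\clq_\theta$ with $\|v\|_2\le\|\sqrt h\|_2$. For $u$, one approximates $\sqrt h$ by polynomials $q_t$ and uses $XP_{\clq_\theta}q_t=q_t(S_z)\psi=P_{\clq_\theta}(q_t\psi)$ to obtain $\int_\T\psi\,\overline{\theta}p\,d\mu=\lim_t\langle XP_{\clq_\theta}q_t,\tilde h\rangle$, bounded in modulus by $\|q_t\|_2\|\sqrt h\|_2\to\|\sqrt h\|_2^2=\|\overline{\theta}p\|_1$. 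One small attribution to adjust: Proposition~\ref{prop: poly decomp} is invoked there to note $P_{\clq_\theta}r\in H^\infty(\D)$ for polynomials $r$, not to run the density reduction as you suggest.
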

\begin{proof}
Since we are dealing with one variable quotient module, we fix a quotient module $\clq_\theta$ of $H^2(\T)$ corresponding to an inner function $\theta \in H^\infty(\D)$. Since $\clm_1 = \{0\}$ and $H^2_0(\T) = z H^2(\T)$, it follows that
\[
\clm_{\clq_\theta} = {\clq}^{conj}_{\theta} \oplus z H^2(\T),
\]
and hence Lemma \ref{lemma: 1 var qm equal} yields a compact form of $\clm_{\clq_\theta}$ as
\[
\clm_{\clq_\theta} = \overline{\theta}(z H^2(\T)).
\]
Let $X \in \clb_1(\clq)$ and let $\psi = X (P_{\clq_\theta} 1)$. In view of the above and Theorem \ref{thm: new CLT 1}, it is enough to prove that $X_{\clq_\theta}: (\clm_{\clq_\theta}, \|\cdot\|_1) \rightarrow \mathbb{C}$ is a contraction, where
\[
X_{\clq_\theta} (\bar{\theta} f) = \int_{\T} \psi \bar{\theta} f \, d\mu,
\]
for all $f \in z H^2(\T)$. To this end, fix $f \in z H^2(\T)$. Then $f \in H^2(\T)$ and $f(0) = 0$. There exists a sequence of polynomials $\{p_m\}_{m \geq 0} \subseteq \mathbb{C}[z]$ such that
\[
p_m(0) = 0,
\]
for all $m\geq 0$, and
\[
p_m \longrightarrow f \text{ in } H^2(\T).
\]
Using the contractive containment $H^2(\T) \hookrightarrow H^1(\T)$, we see that
\[
p_m \rightarrow f \text{ in } H^1(\T).
\]
It also follows that
\begin{equation}\label{eqn: proof rec sar 1}
\bar{\theta} p_m  \rightarrow \bar{\theta} f,
\end{equation}
in both $L^2(\T)$ and  $L^1(\T)$. Then
\[
\int_{\T} \psi \bar{\theta} p_m  d\mu \raro \int_{\T} \psi \bar{\theta} f  d\mu ,
\]
and
\[
\|\bar{\theta} p_m\|_1 \raro \|\bar{\theta} f\|_1,
\]
and hence it is enough to prove that
\[
\Big| \int_{\T} \psi \bar{\theta} p \, d\mu\Big| \leq \|\bar{\theta} p\|_1,
\]
for all $p \in \mathbb{C}[z]$ such that $p(0) = 0$. Fix such a polynomial $p$. Consider the inner-outer factorization of $p$ as
\[
p = \eta h,
\]
where $\eta$ is an inner function, $h$ is outer, and $\eta(0) = 0$. Since $p \in H^\infty(\D)$, it is follows that $h \in H^\infty(\D)$. Using the fact that $\sqrt{h} \in H^\infty(\D) \subseteq H^2(\T)$, we rewrite $p$ as
\[
p = (\eta \sqrt{h}) \sqrt{h}.
\]
It is easy to see that
\[
\|p\|_1 = \|\sqrt{h}\|_2^2.
\]
Moreover, we have a sequence of polynomials $\{q_t\}_{t \geq 0} \subseteq \mathbb{C}[z]$ such that
\[
q_{t} \longrightarrow \sqrt{h} \text{ in } H^2(\T).
\]
As $\eta \sqrt{h} \in H^\infty(\D)$, we have
\[
\langle \psi, \theta  \overline{q_t \eta \sqrt{h}} \rangle \longrightarrow \langle \psi, \theta \overline{\sqrt{h} \eta \sqrt{h}} \rangle,
\]
and then, rewriting $\sqrt{h} \eta \sqrt{h} = p$,  we conclude that
\[
\langle \psi, \theta \overline{q_t \eta \sqrt{h}} \rangle \longrightarrow \langle \psi, \theta \bar{p} \rangle = \int_\T \psi \bar{\theta} p d\mu,
\]
as $t \raro \infty$. Since $(\eta \sqrt{h})(0) = 0$, Lemma \ref{lemma: 1 var qm equal} implies
\[
\theta \overline{\eta \sqrt{h}} \in \clq_\theta \oplus \overline{z H^2(\T)},
\]
and consequently
\[
\tilde{h}:= P_{H^2(\T)} (\theta \overline{ \eta \sqrt{h}}) \in \clq_\theta.
\]
Then, recalling $\psi = X(P_{\clq_\theta} 1)$, we compute
\[
\begin{split}
\langle \psi, \theta \overline{q_t \eta \sqrt{h}} \rangle & = \langle \psi q_t, \theta \overline{\eta \sqrt{h}} \rangle
\\
& = \langle P_{H^2(\T)} \psi q_t, P_{H^2(\T)} \theta \overline{\eta \sqrt{h}} \rangle
\\
& =  \langle P_{H^2(\T)} \psi q_t, \tilde{h} \rangle
\\
& =  \langle P_{\clq_\theta} \psi q_t,\tilde{h} \rangle.
\end{split}
\]
We also observe, for a general polynomial $r \in \mathbb{C}[z]$, that
\[
\begin{split}
X P_{\clq_\theta} r & = X r(S_z) P_{\clq_\theta} 1
\\
& = r(S_z) X P_{\clq_\theta} 1
\\
& = r(S_z) \psi,
\end{split}
\]
that is, $X P_{\clq_\theta} r = P_{\clq_\theta} r\psi$. It is important to note that (by virtue of Proposition \ref{prop: poly decomp})
\[
P_{\clq_\theta} r \in H^\infty(\D).
\]
Since $\{q_t\}_{t \geq 0} \subseteq \mathbb{C}[z]$, we conclude
\[
\langle \psi, \theta \overline{q_t \eta \sqrt{h}} \rangle = \langle X P_{\clq_\theta} q_t, \tilde{h} \rangle,
\]
and hence
\[
\Big|\langle X P_{\clq_\theta} q_t, \tilde{h} \rangle\Big| \longrightarrow \Big|\int_\T \psi \bar{\theta} p d\mu\Big|,
\]
as $t \raro \infty$. But, $\|X\| \leq 1$, and $\|\tilde{h}\| \leq \|\sqrt{h}\|$, and hence
\[
\Big|\langle X P_{\clq_\theta} q_t, \tilde{h} \rangle\Big| \leq \|q_t\|_2 \|\sqrt{h}\|_2.
\]
As $t \raro \infty$, we have (note that $\theta$ is an inner function)
\[
\|q_t\|_2 \|\sqrt{h}\|_2 \raro \|\sqrt{h}\|^2_2 = \|p\|_1 = \|\bar{\theta} p\|_1,
\]
 and hence
\[
\Big|\int_\T \psi \bar{\theta} p d\mu\Big| \leq \|\bar{\theta} p\|_1,
\]
which completes the proof of the theorem.
\end{proof}

Sarason's proof of the above theorem used similar one-variable tools. Moreover, we point out that Sarason’s theorem proves more than we have recovered above. In fact, in his setting, the norm of the given commutator on the model space is equal to the norm of the Schur function (see the discussion preceding the first commutative diagram in the introductory section).

\section{Other results}\label{sec: con remarks}

In this section, we present a variety of results with varying flavors. First, we present a solution to the Carath\'{e}odory-Fej\'{e}r interpolation problem on $\D^n$. Then we discusses the interpolation problem from the standpoint of Pick matrix positivity. The lifting theorem for the Bergman space over $\D^n$ is then compared, followed by decompositions of polynomials in light of Beurling-type quotient modules of $H^2(\T^n)$.

\subsection{Carath\'{e}odory-Fej\'{e}r interpolation}\label{subsect: Caratheodory}
We use the notations that were introduced in Section \ref{sec: examples of hom qm}. Recall that for $t \in \Z_+$, $H_t \subseteq \mathbb{C}[z_1, \ldots, z_n]$ is the complex vector space of homogeneous polynomials of degree $t$. Moreover, for each $m \in \mathbb{N}$, define the finite-dimensional homogeneous quotient module $\clq_m$ of $H^2(\T^n)$ by
\[
\clq_m := \bigoplus_{t=0}^m H_t.
\]
Fix a natural number $m$. Given $p \in \mathbb{C}[z_1, \ldots, z_n]$, it follows that $p \in \clq_m$ if and only if $\text{deg} p \leq m$. In the context of $\cls(\D^n)$, the Carath\'{e}odory-Fej\'{e}r interpolation problem asks the following: Given a polynomial $p \in \clq_m$, when does there exist a function $f \in \clq_m^\perp$ such that
\[
p \oplus f \in \cls(\D^n)?
\]
Here and in what follows, $p \oplus f$ is in the sense of the direct sum $\clq_m \oplus \clq_m^\perp$. This formulation of the Carath\'{e}odory-Fej\'{e}r interpolation problem is more appropriate for the case of $n>1$, see \cite[page 670]{Barik et al}.

The following is an interpretation of the Carath\'{e}odory-Fej\'{e}r problem in terms of commutant lifting.

\begin{proposition}\label{prop: caratheodory}
Let $p \in \clq_m$. There exists $f \in \clq_m^\perp$ such that $p \oplus f \in \cls(\D^n)$ if and only if $S_p$ is a contraction and admits a lift.
\end{proposition}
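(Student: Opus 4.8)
The plan is to identify the Carath\'{e}odory-Fej\'{e}r interpolation data with a specific module map on the homogeneous quotient module $\clq_m$, so that the proposition becomes a direct translation of the definition of liftability. First I would, given $p \in \clq_m$, define the operator $X_p \in \clb(\clq_m)$ by $X_p = S_p = P_{\clq_m} T_p|_{\clq_m}$. As observed in Section \ref{sec: examples of hom qm} (in the discussion preceding \eqref{eqn: norm of Sp 1}), $S_p$ is automatically a module map, since $S_p S_{z_i} = S_{z_i} S_p$ for all $i = 1, \ldots, n$ — this follows from $p \in H^\infty(\D^n)$ and the standard fact that compressions of commuting analytic Toeplitz operators commute. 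So the operator-theoretic object attached to the data $p$ is canonically $S_p$.

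The key step is the equivalence between the existence of a Carath\'{e}odory-Fej\'{e}r completion and the liftability of $S_p$. For the forward direction: suppose $f \in \clq_m^\perp$ with $\vp := p \oplus f \in \cls(\D^n)$. I would show $S_\vp = S_p$. Indeed, since $\clq_m^\perp$ is a submodule, $T_\vp$ maps $\clq_m^\perp$ into itself, hence $P_{\clq_m} T_\vp = P_{\clq_m} T_\vp P_{\clq_m}$, and in particular $S_\vp(P_{\clq_m} 1) = P_{\clq_m} \vp = p$ (using $\vp = p \oplus f$ with $f \perp \clq_m$ and $1 \in \clq_m$). But also $S_p(P_{\clq_m}1) = S_p 1 = p$, since $p \in \clq_m$. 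Then, exactly as in the final paragraph of the proof of Theorem \ref{thm: new CLT 1}, the fact that both $S_\vp$ and $S_p$ are module maps agreeing on $P_{\clq_m} 1$, together with $\clq_m = \overline{\text{span}}\{P_{\clq_m} z^{\bk} : \bk \in \Z_+^n\}$, forces $S_\vp = S_p$ on all of $\clq_m$. Since $\|\vp\|_\infty \leq 1$ gives $\|S_\vp\| \leq 1$, we conclude $S_p$ is a contraction, and $\vp$ is a lift of it. For the converse: if $S_p$ is a contraction and admits a lift $\vp \in \cls(\D^n)$, i.e.\ $S_p = S_\vp$, then $P_{\clq_m} \vp = S_\vp(P_{\clq_m} 1) = S_p 1 = p$, so writing $\vp = p \oplus f$ with respect to $\clq_m \oplus \clq_m^\perp$ yields $f \in \clq_m^\perp$ with $p \oplus f = \vp \in \cls(\D^n)$, which is exactly the desired completion.

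I do not expect a serious obstacle here; the statement is essentially a repackaging of Definition \ref{def: lift} once one recognizes the right operator. The one point requiring a little care is the bookkeeping that $S_p(P_{\clq_m} 1) = P_{\clq_m}\vp$ whenever $S_p = S_\vp$ with $\vp \in \cls(\D^n)$ — this rests on $\clq_m^\perp$ being invariant under $T_\vp$ (true because $\clq_m^\perp$ is a submodule and $\vp \in H^\infty(\D^n)$), which is precisely the computation $S_\vp P_{\clq_m} = P_{\clq_m} T_\vp$ used in Theorem \ref{thm: new CLT 1}. With that in hand, the equivalence is immediate, and combining it with Theorem \ref{thm: combine CLT} (applied to the module map $S_p$, whose associated function is $\psi = S_p(P_{\clq_m} 1) = p$) would in fact give the quantitative reformulations of the Carath\'{e}odory-Fej\'{e}r problem in terms of the contractivity of the functional $X_{\clq_m}$ or the $L^1$-distance estimate, which is presumably the point of stating the proposition in this form.
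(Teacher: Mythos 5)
Your proof is correct. The converse direction is essentially the paper's. In the forward direction you establish $S_\vp = S_p$ by a slightly different route: the paper computes $S_\vp q = P_{\clq_m}(pq) = S_p q$ directly for each $q \in \clq_m$ (using that $q$ is a polynomial and $\clq_m^\perp$ is a submodule, so $fq \in \clq_m^\perp$), whereas you check agreement at the generator $P_{\clq_m}1 = 1$ and then propagate via the module-map commutation relations as at the end of the proof of Theorem~\ref{thm: new CLT 1}. Both are valid; the propagation argument is marginally more portable, since it only uses that $1 \in \clq_m$ and that $\{P_{\clq_m}z^{\bk}: \bk \in \Z_+^n\}$ spans $\clq_m$, not that $\clq_m$ consists of polynomials, while the paper's version is a shorter self-contained calculation. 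Your closing remark, that combining this with Theorem~\ref{thm: combine CLT} applied to $\psi = S_p(P_{\clq_m}1) = p$ yields the quantitative Carath\'{e}odory--Fej\'{e}r criterion, is exactly what the paper records next in Corollary~\ref{cor: Caratheodory}.
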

\begin{proof}
Suppose there exists a function $f \in \clq_m^\perp$ such that
\[
\vp:=p \oplus f \in \cls(\D^n).
\]
For each $q \in \clq_m$, we have
\[
\begin{split}
S_\vp q & = P_{\clq_m} T_\vp q
\\
& = P_{\clq_m} (p \oplus f) q
\\
& = P_{\clq_m} (pq) + P_{\clq_m}(f q).
\end{split}
\]
But, $\clq_m^\perp$ is a submodule and $q$ is a polynomial. This implies $fq \in \clq_m^\perp$, and consequently
\[
S_\vp q = P_{\clq_m} (pq).
\]
On the other hand, $q \in \clq_m$ and
\[
p \in \clq_m \subseteq \mathbb{C}[z_1, \ldots, z_n]
\]
yield
\[
\begin{split}
P_{\clq_m} (pq) & = P_{\clq_m} T_p|_{\clq_m} q
\\
& = S_p q,
\end{split}
\]
which proves that $S_\vp = S_p$. The contractivity of $S_p$ also follows from the same of $S_\vp$ (recall that $\|\vp\|_\infty \leq 1$).

\noindent For the reverse direction, suppose $S_p \in \clb_1(\clq_m)$ admits a lift. Then there exists $\vp \in \cls(\D^n)$ such that $S_p = S_\vp$. Using $1 \in \clq_m$, it follows that
\[
\begin{split}
p & = S_p 1
\\
& = S_\vp 1
\\
& = P_{\clq_m} \vp,
\end{split}
\]
and hence there exists $f \in \clq_m^\perp$ such that $\vp = p \oplus f$. This completes the proof of the proposition.
\end{proof}

We are now ready for the solution to the Carath\'{e}odory-Fej\'{e}r interpolation problem. We will apply our commutant lifting theorem to the above. In view of Theorem \ref{thm: new CLT 1}, we set
\[
\clm_{\clq_m} = \clq^{conj}_m \dotplus (\clm_n \dotplus H^2_0(\T^n)).
\]
Recall that
\[
\clm_n = L^2(\T^n) \ominus ({H^2(\T^n)}^{conj} \dotplus H^2(\T^n)).
\]

\begin{corollary}\label{cor: Caratheodory}
Given $p \in \clq_m$, there exists $f \in \clq_m^\perp$ such that $p \oplus f \in \cls(\D^n)$ if and only if $\mathfrak{C}_{\clz, \clw}: (\clm_{\clq_m}, \|\cdot\|_1) \raro \mathbb{C}$ is a contraction, where
\[
\mathfrak{C}_{\clz, \clw} (g) = \int_{\T^n} p g \,d\mu \qquad (g \in \clm_{\clq_m}).
\]
\end{corollary}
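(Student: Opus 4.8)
The plan is to derive this corollary directly from Proposition \ref{prop: caratheodory} together with Theorem \ref{thm: new CLT 1}. By Proposition \ref{prop: caratheodory}, the existence of $f \in \clq_m^\perp$ with $p \oplus f \in \cls(\D^n)$ is equivalent to the statement that $S_p \in \clb_1(\clq_m)$ admits a lift. So it suffices to identify the functional $X_{\clq_m}$ of Theorem \ref{thm: new CLT 1} in this concrete setting and check that it coincides with $\mathfrak{C}_{\clz,\clw}$.

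First I would record that $\clq_m$ is indeed a quotient module of $H^2(\T^n)$ (already noted in Section \ref{sec: examples of hom qm}, since $T_{z_i}^* \clq_m \subseteq \clq_m$) and that $S_p = P_{\clq_m} T_p|_{\clq_m}$ is a module map: this is immediate because $T_p$ commutes with each $T_{z_i}$ and $\clq_m^\perp$ is a submodule, so $S_p S_{z_i} = S_{z_i} S_p$. Next, observe that $1 \in \clq_m$, hence $P_{\clq_m} 1 = 1$, and therefore the relevant $\psi$ from Theorem \ref{thm: new CLT 1} is
\[
\psi = S_p(P_{\clq_m} 1) = S_p 1 = P_{\clq_m}(p \cdot 1) = P_{\clq_m} p = p,
\]
the last equality because $p \in \clq_m$. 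Consequently the functional $X_{\clq_m}: (\clm_{\clq_m}, \|\cdot\|_1) \raro \mathbb{C}$ of Theorem \ref{thm: new CLT 1} is given by $X_{\clq_m}(g) = \int_{\T^n} \psi g \, d\mu = \int_{\T^n} p g \, d\mu$ for all $g \in \clm_{\clq_m}$, which is exactly $\mathfrak{C}_{\clz,\clw}$.

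Putting the pieces together: by Proposition \ref{prop: caratheodory} there exists $f \in \clq_m^\perp$ with $p \oplus f \in \cls(\D^n)$ if and only if $S_p$ is a contraction admitting a lift; note the contractivity of $S_p$ is part of the hypothesis ``$S_p$ admits a lift'' only after we know $S_p \in \clb_1(\clq_m)$, so strictly one should first check $\|S_p\| \le 1$, but this is forced in the nontrivial direction and in the other direction we simply assume $\mathfrak{C}_{\clz,\clw}$ is a contraction and observe this already yields $\|S_p\| \le 1$ via the lift produced by Theorem \ref{thm: new CLT 1}. Applying Theorem \ref{thm: new CLT 1} to $\clq = \clq_m$ and $X = S_p$, liftability of $S_p$ is equivalent to $X_{\clq_m} = \mathfrak{C}_{\clz,\clw}$ being a contractive functional on $(\clm_{\clq_m}, \|\cdot\|_1)$. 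Chaining the two equivalences gives the corollary.

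The only mildly delicate point — and the one I would be careful about — is the bookkeeping around the norm condition $\|S_p\| \le 1$: Theorem \ref{thm: new CLT 1} and Proposition \ref{prop: caratheodory} are both phrased for contractive module maps, so in the forward direction ($p\oplus f$ exists) one gets $\|S_p\|=\|S_\vp\|\le\|\vp\|_\infty\le 1$ for free, while in the reverse direction the contractivity is automatic once $S_p = S_\vp$ for the $\vp \in \cls(\D^n)$ handed back by Theorem \ref{thm: new CLT 1}. Everything else is a direct substitution, so there is no real obstacle; this corollary is purely a matter of specializing the general machinery. I would state it crisply:

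\begin{proof}
Since $1 \in \clq_m$, we have $P_{\clq_m} 1 = 1$, and because $p \in \clq_m \subseteq \mathbb{C}[z_1, \ldots, z_n]$,
\[
S_p(P_{\clq_m} 1) = S_p 1 = P_{\clq_m}(p \cdot 1) = P_{\clq_m} p = p.
\]
Thus, in the notation of Theorem \ref{thm: new CLT 1} applied to the quotient module $\clq_m$ and the module map $S_p$, we have $\psi = p$, and the associated functional $X_{\clq_m} : (\clm_{\clq_m}, \|\cdot\|_1) \raro \mathbb{C}$ is precisely
\[
X_{\clq_m}(g) = \int_{\T^n} \psi g \, d\mu = \int_{\T^n} p g \, d\mu = \mathfrak{C}_{\clz, \clw}(g) \qquad (g \in \clm_{\clq_m}).
\]
By Proposition \ref{prop: caratheodory}, there exists $f \in \clq_m^\perp$ with $p \oplus f \in \cls(\D^n)$ if and only if $S_p$ is a contraction and admits a lift; in either case $S_p \in \clb_1(\clq_m)$, so Theorem \ref{thm: new CLT 1} applies and shows that $S_p$ is liftable if and only if $X_{\clq_m} = \mathfrak{C}_{\clz, \clw}$ is a contractive functional on $(\clm_{\clq_m}, \|\cdot\|_1)$. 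Combining the two equivalences completes the proof.
\end{proof}
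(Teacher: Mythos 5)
Your proof is correct and follows the same route as the paper: reduce to liftability of $S_p$ via Proposition \ref{prop: caratheodory}, compute $\psi = S_p(P_{\clq_m}1) = p$, and invoke Theorem \ref{thm: new CLT 1}. The only difference is your explicit discussion of the $\|S_p\|\le 1$ bookkeeping (which the paper's one-line proof glosses over); your resolution is sound, since the converse direction of Theorem \ref{thm: new CLT 1} produces a $\vp\in\cls(\D^n)$ with $S_p=S_\vp$, from which $\|S_p\|\le\|\vp\|_\infty\le 1$ follows a posteriori.
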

\begin{proof}
By Theorem \ref{thm: new CLT 1} and the preceding proposition, the assertion is equivalent to the contractivity of the functional $\chi_{{\clq_m}}$ on $(\clm_{\clq_m}, \|\cdot\|_1)$, where
\[
\chi_{{\clq_m}} g = \int_{\T^n} \psi g \,d\mu \qquad (g \in \clm_{\clq_m}),
\]
and $\psi = S_p (P_{\clq_m} 1)$. However, $1 \in \clq_m$ implies $P_{\clq_m} (1) = 1$, and $p \in \clq_m$ implies $S_p (1) = p$. Then
\[
\chi_{{\clq_m}} = \mathfrak{C}_{\clz, \clw} \text{ on } \clm_{\clq_m},
\]
completes the proof of the corollary.
\end{proof}

We refer the reader to Eschmeier, Patton and Putinar \cite{Patton}, and Woerdeman \cite{Hugo} for the Carath\'{e}odory interpolation problem in the context of Agler-Herglotz class functions and Agler-Herglotz-Nevanlinna formula on the polydisc. Also see the paper by Kalyuzhnyi-Verbovetzkii \cite{Dimitry}.

\subsection{Weak interpolation}\label{subsect: weak interpolation} Given $\clz = \{z_i\}_{i=1}^m \subset \D^n$ and $\clw = \{w_i\}_{i=1}^m \subset \D$, we define the $m \times m$ \textit{Pick matrix} $\mathfrak{P}_{\clz, \clw}$ as
\[
\mathfrak{P}_{\clz, \clw} = \Big((1 - w_i \bar{w}_j) \mathbb{S}(z_i, z_j) \Big)_{i,j=1}^m.
\]
Recall that a matrix $(a_{ij})_{m\times m}$ is positive semi-definite (in short $(a_{ij})_{m\times m} \geq 0$) if
\[
\sum_{i,j=1}^{m} \bar{\alpha}_i \alpha_j a_{ij} \geq 0,
\]
for all scalars $\{\alpha_i\}_{i=1}^m \subseteq \mathbb{C}$.

\begin{definition}
A set of distinct points $\clz = \{z_i\}_{i=1}^m \subset \D^n$ is said to be a Pick set if, for $\clw = \{w_i\}_{i=1}^m \subset \D$ satisfying
\[
\mathfrak{P}_{\clz, \clw} \geq 0,
\]
there exists $\vp \in \cls(\D^n)$ such that $\vp(z_i) = w_i$ for all $i=1, \ldots, m$.
\end{definition}

This definition is in view of the classical Pick positivity and the Nevanlinna-Pick interpolation on $\D$. We need another definition along the lines of Sarason's commutant lifting theorem:

\begin{definition}
A quotient module $\clq \subseteq H^2(\T^n)$ satisfies the commutant lifting property if every contraction on $\clq$ admits lifting.
\end{definition}

In other words, for a module map $X \in \clb_1(\clq)$, there exists $\vp \in \cls(\D^n)$ such that $X = S_\vp$. Now we use Sarason's trick to prove the Nevanlinna-Pick interpolation but in the setting of $\cls(\D^n)$ for any $n \geq 1$. The proof is standard and follows in Sarason's footsteps.

\begin{proposition}\label{prop: Pick set and CLT}
Let $\mathcal{Z} = \{z_j\}_{j=1}^m \subset \D^n$ be a set of $m$ distinct points. Then $\mathcal{Z}$ is a Pick set if and only if $\clq_{\mathcal{Z}}$ satisfies the commutant lifting property, where
\[
\clq_\clz = \text{span} \{\mathbb{S}(\cdot, z_i): i=1, \ldots, m\}.
\]
\end{proposition}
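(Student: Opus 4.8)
The plan is to prove both implications by exploiting the correspondence, established in Lemma \ref{eqn: Y star S} and Corollary \ref{cor: lift without 1}, between module maps on $\clq_{\clz}$ and eigenvalue data on the kernel functions, together with the classical characterization of the norm of a module map in terms of the Pick matrix. First I would recall the basic normalization: by Lemma \ref{eqn: Y star S}, a module map $X \in \clb(\clq_{\clz})$ is completely determined by scalars $\{w_i\}_{i=1}^m \subset \mathbb{C}$ via $X^* \mathbb{S}(\cdot, z_i) = \bar{w}_i \mathbb{S}(\cdot, z_i)$, and conversely every such choice of scalars defines a module map $X_{\clz, \clw}$. The single computational fact I need is the standard identity
\[
\|X_{\clz, \clw}\| \leq 1 \iff \mathfrak{P}_{\clz, \clw} = \Big((1 - w_i \bar{w}_j) \mathbb{S}(z_i, z_j)\Big)_{i,j=1}^m \geq 0,
\]
which follows from writing an arbitrary element of $\clq_{\clz}$ as $f = \sum \alpha_i \mathbb{S}(\cdot, z_i)$, computing $\|f\|^2$ and $\|X_{\clz,\clw}^* f\|^2$ as quadratic forms in $\{\alpha_i\}$ with Gram matrices $(\mathbb{S}(z_i,z_j))$ and $(w_i \bar w_j \mathbb{S}(z_i,z_j))$ respectively, and observing that $X_{\clz,\clw}^*$ is a contraction precisely when the difference of these forms is nonnegative (using that the Gram matrix $(\mathbb{S}(z_i,z_j))$ is invertible, so $\{\alpha_i\}$ ranges over all of $\mathbb{C}^m$). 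Note $\|X_{\clz,\clw}\| = \|X_{\clz,\clw}^*\|$.

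Next I would assemble the two directions. Suppose $\clz$ is a Pick set and let $X \in \clb_1(\clq_{\clz})$ be a module map; by Lemma \ref{eqn: Y star S} write $X = X_{\clz, \clw}$ for appropriate $\{w_i\}_{i=1}^m$. Since $\|X\| \leq 1$, the equivalence above gives $\mathfrak{P}_{\clz, \clw} \geq 0$; one should check en route that the contractivity forces $|w_i| < 1$ (equivalently $w_i \in \D$), which follows by restricting to the one-dimensional subspace $\mathbb{C}\mathbb{S}(\cdot, z_i)$, so that the Pick-set hypothesis applies and yields $\vp \in \cls(\D^n)$ with $\vp(z_i) = w_i$. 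Then the claim argument inside the proof of Theorem \ref{thm: interpolation} shows $S_\vp^* \mathbb{S}(\cdot, z_i) = \overline{\vp(z_i)} \mathbb{S}(\cdot, z_i) = \bar{w}_i \mathbb{S}(\cdot, z_i)$, hence $S_\vp = X_{\clz, \clw} = X$, so $X$ is liftable; thus $\clq_{\clz}$ has the commutant lifting property. Conversely, suppose $\clq_{\clz}$ has the commutant lifting property and let $\clw = \{w_i\}_{i=1}^m \subset \D$ satisfy $\mathfrak{P}_{\clz, \clw} \geq 0$. Define $X_{\clz, \clw}$ as in Lemma \ref{eqn: Y star S}; it is a module map, and by the equivalence above the positivity of the Pick matrix gives $\|X_{\clz, \clw}\| \leq 1$, so $X_{\clz, \clw} \in \clb_1(\clq_{\clz})$. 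The commutant lifting property produces $\vp \in \cls(\D^n)$ with $S_\vp = X_{\clz, \clw}$, and computing $S_\vp^* \mathbb{S}(\cdot, z_i) = \overline{\vp(z_i)} \mathbb{S}(\cdot, z_i)$ against $X_{\clz, \clw}^* \mathbb{S}(\cdot, z_i) = \bar{w}_i \mathbb{S}(\cdot, z_i)$ yields $\vp(z_i) = w_i$ for all $i$. Hence $\clz$ is a Pick set.

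The only genuine content is the quadratic-form computation relating $\|X_{\clz, \clw}\|$ to $\mathfrak{P}_{\clz, \clw} \geq 0$, and even that is entirely classical (it is the reproducing-kernel form of Pick's lemma); everything else is bookkeeping with the kernel eigenvector identity \eqref{eqn: eigen vec kernel 2} and the already-proved Lemma \ref{eqn: Y star S}. I do not expect any real obstacle here — the proposition is, as the text says, "standard and follows in Sarason's footsteps" — the one point deserving a line of care is ensuring that in the forward direction the scalars $w_i$ arising from a contractive module map actually land in $\D$ (not merely in $\overline{\D}$), which is where one uses strict contractivity of $X$ on each one-dimensional kernel eigenspace, or alternatively notes that the relevant diagonal entries of the Pick matrix, $(1-|w_i|^2)\mathbb{S}(z_i,z_i)$, must be nonnegative with $\mathbb{S}(z_i,z_i) > 0$; if one only gets $|w_i| \le 1$ the boundary case can be handled separately or folded into the definition.
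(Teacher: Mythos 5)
Your argument is essentially the same as the paper's: both rest on the reproducing-kernel quadratic-form identity equating contractivity of the module map with positivity of the Pick matrix, and both combine this with Lemma~\ref{eqn: Y star S} and the eigenvector identity \eqref{eqn: eigen vec kernel 2} to pass between interpolating functions and liftable module maps. The paper simply names the roles slightly differently (setting $X = Y^*$ and working with $I - X^*X$), which is purely cosmetic.

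The one point where your write-up is both more careful than the paper's and slightly imprecise internally is the boundary issue in the forward direction. You initially assert that contractivity forces $|w_i| < 1$ by restricting to $\mathbb{C}\,\mathbb{S}(\cdot,z_i)$ — but that restriction only gives $|w_i| \le 1$, and the diagonal Pick entries $(1-|w_i|^2)\mathbb{S}(z_i,z_i) \ge 0$ also only give $|w_i| \le 1$; there is no ``strict contractivity'' available, since $X$ is merely a contraction. You do flag this later and say the boundary case can be handled separately, which is the right thing to say. For completeness: if $|w_j| = 1$ for some $j$, then the $j$-th diagonal entry of $\mathfrak{P}_{\clz,\clw}$ vanishes, and positive semidefiniteness together with $\mathbb{S}(z_i,z_j) \neq 0$ forces $w_i = w_j$ for all $i$, so $X = wI$ with $|w|=1$; this lifts trivially to the constant $\vp \equiv w \in \cls(\D^n)$. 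This observation closes a gap the paper leaves implicit — the definition of ``Pick set'' only quantifies over $\clw \subset \D$, while a contractive module map can a priori produce $w_i \in \overline{\D}$ — so your instinct to check it was correct even if the first justification you offered for it was not.
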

\begin{proof}
We begin with a simple observation. Given $\clw = \{w_i\}_{i=1}^m \subset \D$, we define $X \in \clb(\clq_\clz)$ by (note that $\clq_\clz$ is a finite-dimensional Hilbert space)
\[
X \mathbb{S}(\cdot, z_i) = \bar{w}_i \mathbb{S}(\cdot, z_i) \qquad (i=1, \ldots, m).
\]
By Lemma \ref{eqn: Y star S}, it follows that $X^*$ is a module map. Moreover, we have
\[
\Big \langle (I_{\clq_\clz} - X^* X) \Big(\sum_{j=1}^{m} \alpha_j \mathbb{S} (\cdot, z_j)\Big), \Big(\sum_{i=1}^{m} \alpha_i \mathbb{S} (\cdot, z_i)\Big) \Big \rangle = \sum_{i,j=1}^{m} \alpha_j \bar{\alpha}_i (1 - w_i \bar{w}_j) \mathbb{S}(z_i, z_j),
\]
for all scalars $\{\alpha_i\}_{i=1}^m \subset \mathbb{C}$. It follows that $X$ is a contraction if and only if
\[
\mathfrak{P}_{\clz, \clw} \geq 0.
\]
Now suppose that $\clz$ is a Pick set, and let $Y \in \clb_1(\clq_\clz)$ be a module map. We claim that $Y$ has a lift. If we define $X:= Y^*$, then we are precisely in the setting of the above discussion. The contractivity of $X$ (as $\|Y^*\| \leq 1$) then implies that the Pick matrix is positive, that is, $\mathfrak{P}_{\clz, \clw} \geq 0$. There exists $\vp \in \cls(\D^n)$ such that $\vp(z_i) = w_i$ for all $i=1, \ldots, m$. Then
\[
Y^* \mathbb{S}(\cdot, z_j) = T_\vp^* \mathbb{S}(\cdot, z_j) \qquad (j=1,\ldots, m),
\]
and we conclude that $Y^* = T_{\vp}^*|_{\clq_\clz}$, or equivalently, $Y = S_\vp$.

\noindent To show the converse, assume that $\clq_\clz$ satisfies the commutant lifting property. Let $\clw = \{w_i\}_{i=1}^m \subset \D$, and suppose $\mathfrak{P}_{\clz, \clw} \geq 0$. Then $X$, as defined at the beginning of the proof, is a contraction, and hence $X = S_\vp$ for some $\vp \in \cls(\D^n)$. It is now routine to check that $\vp(z_i) = w_i$ for all $i=1, \ldots, m$.
\end{proof}

In the case of $n=1$, the classical Nevanlinna Pick interpolation theorem now follows directly from Sarason's lifting theorem. The above formulation also works verbatim the same way as for multiplier spaces for general reproducing kernel Hilbert spaces over domains in $\mathbb{C}^n$ (including the open unit ball in $\mathbb{C}^n$).

In view of the above proposition, we conclude that the solution to the interpolation problem in terms of Pick positivity is simply equivalent to the commutant lifting problem for quotient modules of the form $\clq_\clz$ for finite subsets $\clz \subseteq \D^n$. Again, this is true for general multiplier spaces.

\subsection{Bergman space and lifting}\label{subsect: Bergman CLT} Although all of the observations in this subsection hold true for weighted Bergman spaces (even for a large class of reproducing kernel Hilbert spaces) over $\D^n$ along with verbatim proofs, we will stick to the Bergman space only. Denote by $A^2(\D^n)$ the Bergman space over $\D^n$. Recall that an analytic function $f$ on $\D^n$ is in $A^2(\D^n)$ if and only if
\[
\|f\|_{A^2(\D^n)}:= \Big(\int_{\D^n} |f(z)|^2 \, d\sigma(z)\Big)^{\frac{1}{2}} < \infty,
\]
where $d\sigma(z)$ denotes the normalized volume measure on $\D^n$. We know that $A^2(\D^n)$ is a reproducing kernel Hilbert space corresponding to the Bergman kernel
\[
K(z,w) = \prod_{i=1}^n \frac{1}{(1 - z_i \bar{w}_i)^2} \qquad (z, w \in \D^n).
\]
Recall that the multiplier space of $A^2(\D^n)$ is again $H^\infty(\D^n)$, which for simplicity of notation (or, to avoid confusion), we denote by $\clm(A^2(\D^n))$. In other words
\[
\clm(A^2(\D^n)) = H^\infty(\D^n).
\]
For each $\vp \in \clm(A^2(\D^n))$, the map $f \in A^2(\D^n) \mapsto \vp f \in A^2(\D^n)$ defines a multiplication operator on $A^2(\D^n)$, which we denote by $M_\vp$.

Let $\clq \subseteq A^2(\D^n)$ be a quotient module (that is, $\clq$ is closed and $M^*_{z_i} \clq \subseteq \clq$ for all $i=1, \ldots, n$). For each $\vp \in H^\infty(\D^n)$, set
\[
B_{\vp} = P_{\clq} M_\vp|_{\clq}.
\]
Let $X \in \clb_1(\clq)$ be a module map, that is, $X B_{z_i} = B_{z_i} X$ for all $i=1, \ldots, n$. We say that $X$ is \textit{liftable} or $X$ has a \textit{lift} if there exists $\vp \in H^\infty(\D^n) = \clm(A^2(\D^n))$ such that
\[
X = B_\vp,
\]
and
\begin{equation}\label{eqn: Berg mult norm}
\|B_{\vp}\|_{\clb(A^2(\D^n))} \leq 1.
\end{equation}
We are interested in the commutant lifting for finite-dimensional zero-based quotient modules of $A^2(\D^n)$. For a set of distinct points $\clz = \{z_i\}_{i=1}^m \subset \D^n$, we define (following Section \ref{sec: weak lift}) the $m$-dimensional zero-based quotient module $\clb_\clz \subseteq A^2(\D^n)$ as
\[
\clb_\clz = \text{span} \{K(\cdot, z_i): i=1, \ldots, m\} \subseteq A^2(\D^n).
\]
At the same time, keep in mind that $\clq_\clz$ is also a zero-based quotient module of $H^2(\T^n)$ (again, see the preceding subsection or Section \ref{sec: weak lift}), where
\[
\clq_\clz = \text{span} \{\mathbb{S}(\cdot, z_i): i=1, \ldots, m\} \subseteq H^2(\T^n).
\]
Note that module maps on $\clq_\clz$ are parameterized by $m$ scalars. To be more precise, let $X \in \clb(\clq_\clz)$. Then $X$ is a module map if and only if there exists $\{w_i\}_{i=1}^m \subset \mathbb{C}$ such that
\[
X^* \mathbb{S}(\cdot, z_i) = w_i \mathbb{S}(\cdot, z_i),
\]
for all $i=1, \ldots, m$. This was observed in Lemma \ref{eqn: Y star S}. The same conclusion and proof apply to $\clb_\clz$. Therefore, a module map $X \in \clb(\clq_\clz)$ is associated with $\{w_i\}_{i=1}^m \subseteq \mathbb{C}$, which further defines a module map $\tilde X \in \clb(\clb_\clz)$ as
\[
\tilde{X}^* K(\cdot, z_i) = w_i K(\cdot, z_i),
\]
for all $i=1, \ldots, m$. Consequently, we have the bijective correspondence
\[
X \in \clb(\clq_\clz) \longleftrightarrow \tilde{X} \in \clb(\clb_\clz).
\]

In the case of $n=1$, the problem of commutant lifting for quotient module $\clb_\clz$ of $A^2(\D)$ was studied in the thesis of Sultanic \cite{Sad -1}. While she was focused on finite-dimensional quotient modules of $A^2(\D)$, but the zero-based quotient modules played the most crucial role. Here we aim at proving the following proposition:

\begin{proposition}
Let $\clz = \{z_i\}_{i=1}^m \subset \D^n$ be a set of distinct points, and let $X \in \clb_1(\clq_\clz)$ be a module map. Then $X$ on $\clq_\clz$ is liftable if and only if $\tilde X$ on $\clb_\clz$ is liftable.
\end{proposition}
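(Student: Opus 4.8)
The plan is to reduce both liftability statements to one and the same space-free condition: \emph{there exists $\vp \in \cls(\D^n)$ with $\vp(z_i) = \bar{w}_i$ for every $i=1,\ldots,m$}, where $\{w_i\}_{i=1}^m \subset \mathbb{C}$ is the tuple attached to the module map $X$ by Lemma~\ref{eqn: Y star S}, so that $X^*\bS(\cdot, z_i) = w_i\,\bS(\cdot, z_i)$, and $\tilde X$ is the module map on $\clb_\clz$ determined by $\tilde X^* K(\cdot, z_i) = w_i\,K(\cdot, z_i)$. Since this intermediate condition refers to neither $\clq_\clz$ nor $\clb_\clz$, the asserted equivalence follows at once once both reductions are in place.

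Both reductions rest on the same two observations. First, $\{\bS(\cdot, z_i)\}_{i=1}^m$ and $\{K(\cdot, z_i)\}_{i=1}^m$ are bases of the finite-dimensional spaces $\clq_\clz$ and $\clb_\clz$, so an operator on either space is determined by the action of its adjoint on these kernels. Second, for any $\vp \in H^\infty(\D^n)$, exactly as in the Claim inside the proof of Theorem~\ref{thm: interpolation} (using $P_{\clq_\clz}\bS(\cdot, z_i) = \bS(\cdot, z_i)$, $P_{\clb_\clz}K(\cdot, z_i) = K(\cdot, z_i)$, and that the adjoint of a multiplication operator acts on a reproducing kernel by the conjugate of its value at the node),
\[
S_\vp^* \bS(\cdot, z_i) = \overline{\vp(z_i)}\,\bS(\cdot, z_i) \quad\text{and}\quad B_\vp^* K(\cdot, z_i) = \overline{\vp(z_i)}\,K(\cdot, z_i),
\]
for all $i=1,\ldots,m$. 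Combining, for a fixed $\vp \in H^\infty(\D^n)$ we obtain $S_\vp = X$ iff $\overline{\vp(z_i)} = w_i$ for all $i$ iff $B_\vp = \tilde X$.

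It remains to line up the norm requirement in the two notions of ``lift'', which is the only step that needs care. For $H^2(\T^n)$, ``$X$ liftable'' (Definition~\ref{def: lift}) means $X = S_\vp$ for some $\vp \in \cls(\D^n)$, i.e.\ $\|\vp\|_\infty \le 1$; by \eqref{eqn: Toeplitz norm equality} this is the same as the full-space lift $T_\vp$ being a contraction, and then $\|S_\vp\| \le \|T_\vp\| \le 1$ is automatic. For $A^2(\D^n)$, the multiplier norm agrees with the supremum norm, $\|M_\vp\|_{\clb(A^2(\D^n))} = \|\vp\|_\infty$ for $\vp \in \clm(A^2(\D^n)) = H^\infty(\D^n)$, so requiring the full-space lift $M_\vp$ of $\tilde X$ to be a contraction (Subsection~\ref{subsect: Bergman CLT}) again amounts to $\vp \in \cls(\D^n)$, and then $\|B_\vp\| = \|P_{\clb_\clz}M_\vp|_{\clb_\clz}\| \le \|M_\vp\| \le 1$ is automatic. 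Hence both ``$X$ on $\clq_\clz$ is liftable'' and ``$\tilde X$ on $\clb_\clz$ is liftable'' say precisely that there is $\vp \in \cls(\D^n)$ with $\overline{\vp(z_i)} = w_i$ for all $i$, and the proposition follows; in particular, the contractivity of $\tilde X$ need not be checked separately, since a Schur lift $\vp$ of either map forces $\|X\| = \|S_\vp\| \le 1$ and $\|\tilde X\| = \|B_\vp\| \le 1$ simultaneously. The substance of the argument is the elementary fact that, on a finite-dimensional zero-based quotient module, a module map and its prescribed action on kernel functions carry identical information whether the ambient space is $H^2(\T^n)$ or $A^2(\D^n)$; the only genuine subtlety — and the place I expect to have to be careful — is this reconciliation of the two liftability definitions, for which the norm equalities $\|T_\vp\|_{\clb(H^2(\T^n))} = \|\vp\|_\infty$ and $\|M_\vp\|_{\clb(A^2(\D^n))} = \|\vp\|_\infty$ are exactly what is needed.
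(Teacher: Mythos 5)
Your proposal is correct and follows essentially the same route as the paper's proof: both identify the module maps on the two zero-based quotient modules with the same eigenvalue tuple $\{w_i\}$, reduce liftability in each setting to the existence of $\vp \in \cls(\D^n)$ with $\vp(z_i) = \bar{w}_i$, and close the loop by the norm equalities $\|T_\vp\|_{\clb(H^2(\T^n))} = \|\vp\|_\infty = \|M_\vp\|_{\clb(A^2(\D^n))}$. The only cosmetic difference is that you phrase the argument as a reduction of both liftability statements to a single space-free interpolation condition, whereas the paper runs the implication directly in one direction (from $\tilde X$ to $X$) and declares the reverse similar; the mathematical content is identical.
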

\begin{proof}
We start by stating a general (and well known) fact: Let $\vp \in \clm(A^2(\D^n))$. Then the operator norm (or multiplier norm) of $M_\vp$ on $A^2(\D^n)$ is given by
\[
\|M_\vp\|_{\clb(A^2(\D^n))} = \|\vp\|_\infty.
\]
Indeed, for $f \in A^2(\D^n)$, we have
\[
\begin{split}
\|\vp f\|_{A^2(\D^n)} & = \Big( \int_{\D^n} |\vp f|^2 d\sigma\Big)^{\frac{1}{2}}
\\
& \leq \Big( \int_{\D^n} \|\vp\|_\infty^2 |f|^2 d\sigma\Big)^{\frac{1}{2}}
\\
& \leq \|\vp\|_\infty \Big( \int_{\D^n} |f|^2 d\sigma\Big)^{\frac{1}{2}},
\end{split}
\]
that is, $\|M_\vp\|_{\clb(A^2(\D^n))} \leq \|\vp\|_\infty$. On the other hand, for each $w \in \D^n$,
\[
\begin{split}
\vp(w) & = \frac{1}{\|K(\cdot, w)\|^2} \langle K(\cdot, w), \overline{\vp(w)} K(\cdot, w) \rangle
\\
& = \frac{1}{\|K(\cdot, w)\|^2} \langle K(\cdot, w), T_{\vp}^* K(\cdot, w) \rangle
\\
& =  \Big\langle T_{\vp} \Big(\frac{K(\cdot, w)}{\|K(\cdot, w)\|}\Big), \frac{K(\cdot, w)}{\|K(\cdot, w)\|} \Big\rangle,
\end{split}
\]
implies that $|\vp(w)| \leq \|M_\vp\|_{\clb(A^2(\D^n))}$, and completes the proof of the claim. Now, suppose that $\tilde{X}$ on $\clb_\clz$ is liftable, that is $\tilde{X} = B_\vp$ for some $\vp \in \clm(A^2(\D^n)) = H^\infty(\D^n)$ with $\|M_\vp\|_{\clb(A^2(\D^n))} \leq 1$. In view of the above observation, we have $\vp \in \cls(\D^n)$. Suppose $\{w_i\}_{i=1}^m \subset \mathbb{C}$ be the scalars corresponding to $\tilde{X}$, that is
\[
\tilde{X}^* K(\cdot, z_i) = w_i K(\cdot, z_i),
\]
for all $i=1, \ldots, m$. This and the equality $\tilde{X} = B_\vp$ imply that
\[
\vp(z_i) = \bar{w}_i \qquad (i=1, \ldots, m),
\]
and hence $X^* = S_\vp^*$. Therefore, $X = S_\vp$, and hence $\vp$ is a lift of $X$. Proof of the reverse direction is similar.
\end{proof}

In other words, the lifting problem on zero-based quotient modules of $A^2(\D^n)$ is equivalent to the lifting problem on zero-based quotient modules of $H^2(\T^n)$. In the case $n=1$, for a module map $\tilde{X} \in \clb_1(\clb_\clz)$, if $\|X\|_{\clb(\clq_\clz)} \leq 1$, then $\tilde{X}$ can be lifted (thanks to Sarason). On the other hand, if ${X} \in \clb_1(\clq_\clz)$ is a module map, then automatically $\tilde{X} \in \clb_1(\clb_\clz)$, and hence $X$ has a lift.

\subsection{Decompositions of polynomials}\label{subsect: polynomial} In this subsection, we decompose polynomials with respect to Beurling-type quotient modules of $H^2(\T^n)$. This result has already been used ($n=1$ case) to recover Sarason's commutant lifting theorem (see Theorem \ref{thm: recov sarason}).

A quotient module $\clq \subseteq H^2(\T^n)$ is said to be of \textit{Beurling type} if there exists an inner function $\vp \in \cls(\D^n)$ such that
\[
\clq = (\vp H^2(\T^n))^\perp.
\]
Recall that all one variable quotient modules are of Beurling type \cite{Beurling}.

\begin{proposition}\label{prop: poly decomp}
Let $\vp \in \cls(\D^n)$ be an inner function, and let $p \in \mathbb{C}[z_1, \ldots, z_n]$. Write
\[
p = f \oplus g \in \vp H^2(\T^n) \oplus (\vp H^2(\T^n))^\perp.
\]
Then $f, g \in H^\infty(\D^n)$.
\end{proposition}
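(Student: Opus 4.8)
The claim is that when we split a polynomial $p$ orthogonally along $H^2(\T^n) = \vp H^2(\T^n) \oplus (\vp H^2(\T^n))^\perp$, with $\vp$ inner, both summands are bounded. The plan is to identify each summand explicitly and exhibit a bounded symbol for it. The key starting observation is that since $\vp$ is inner, $T_\vp$ is an isometry on $H^2(\T^n)$, so $T_\vp T_\vp^* = P_{\vp H^2(\T^n)}$ is the orthogonal projection onto $\vp H^2(\T^n)$. Hence the decomposition $p = f \oplus g$ reads
\[
f = T_\vp T_\vp^* p = \vp (T_\vp^* p), \qquad g = p - \vp (T_\vp^* p).
\]
So it suffices to prove $T_\vp^* p \in H^\infty(\D^n)$; then $f = \vp \cdot (T_\vp^* p)$ is a product of two $H^\infty$ functions, hence in $H^\infty(\D^n)$, and $g = p - f$ is a difference of $H^\infty$ functions, hence in $H^\infty(\D^n)$ as well.

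\textbf{Reducing to $T_\vp^* p \in H^\infty$.} The main step is therefore to show that $T_\vp^*$ maps polynomials into $H^\infty(\D^n)$. I would argue as follows. On $L^2(\T^n)$ we have $T_\vp^* p = P_{H^2(\T^n)}(\bar\vp p)$, because $T_\vp = L_\vp|_{H^2(\T^n)}$ for the analytic symbol $\vp$, so its adjoint is compression of $L_{\bar\vp}$. Now $\bar\vp p \in L^\infty(\T^n)$ since $\vp$ is unimodular a.e.\ and $p$ is a (bounded) polynomial; in fact $\|\bar\vp p\|_\infty = \|p\|_\infty < \infty$. The point is then that the Riesz-type projection $P_{H^2(\T^n)}$, applied to a function in $L^\infty(\T^n)$ whose nonanalytic Fourier part is ``finite'' in a suitable sense, lands in $H^\infty(\D^n)$. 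Concretely: write $\bar\vp p$; since $p$ has only finitely many Fourier coefficients and $\vp \in H^\infty(\D^n)$, the function $\bar\vp p$ can be written as $\overline{\vp} \cdot (\text{finite sum of monomials})$, and for each monomial $z^k$ we have $\overline{\vp} z^k = \overline{\vp \cdot \overline{z^k}}$... this is not quite analytic, so the cleaner route is: $T_\vp^* p = T_\vp^* T_p 1 = T_p^* T_\vp^* 1$ using doubly commuting / the fact that $T_\vp$ and $T_p$ commute so their adjoints commute; but $T_\vp^* 1 = P_{H^2}(\bar\vp) = \overline{\vp(0)}$, a constant (since $\bar\vp$ has only $0$-and-negative frequencies when... no, $\vp$ is not a polynomial in general). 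So instead I would use: $T_\vp^* T_\vp = I$, hence $T_\vp^* p = T_\vp^*(f) = T_\vp^*(\vp h)$ where $h = T_\vp^* p \in H^2(\T^n)$, which is circular. The correct and simplest argument: since $\vp H^2(\T^n)$ is a submodule, $g = P_{(\vp H^2)^\perp} p$ lies in a backward-shift invariant subspace, and $f = \vp h$ with $h \in H^2(\T^n)$; I want $h \in H^\infty$. Observe $h = T_\vp^* p$ and, since $T_{z_i}^* T_\vp^* = T_\vp^* T_{z_i}^*$ and $p$ is a polynomial of multidegree $\le (N,\dots,N)$ say, we get $T_\vp^* p \in \bigcap_i \ker (T_{z_i}^*)^{N+1}$, which is the finite-dimensional space of polynomials of multidegree $\le N$ — hence $h$ is a polynomial, in particular bounded. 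This is the crux.

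\textbf{Executing the crux.} Spelling out that last point: for each $i$, $(T_{z_i}^*)^{N+1} p = 0$ since $p$ has $z_i$-degree at most $N$; and $(T_{z_i}^*)^{N+1} T_\vp^* p = T_\vp^* (T_{z_i}^*)^{N+1} p = 0$ because $T_\vp^*$ commutes with $T_{z_i}^*$ (as $T_\vp$ commutes with $T_{z_i}$). Therefore $h := T_\vp^* p$ lies in $\bigcap_{i=1}^n \ker (T_{z_i}^*)^{N+1}$, which equals $\operatorname{span}\{z^k : k \in \Z_+^n,\ k_i \le N \text{ for all } i\}$, a finite-dimensional space of polynomials. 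Hence $h \in \mathbb{C}[z_1,\dots,z_n] \subseteq H^\infty(\D^n)$. Consequently $f = T_\vp T_\vp^* p = \vp h$ is a product of two elements of $H^\infty(\D^n)$ — hence $f \in H^\infty(\D^n)$ — and $g = p - f \in H^\infty(\D^n)$ as well, completing the proof.

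\textbf{Expected main obstacle.} The only genuinely non-formal point is verifying $\bigcap_i \ker (T_{z_i}^*)^{N+1} = \{\text{polynomials of multidegree} \le N\}$; this follows from expanding a general $H^2$ function in its monomial basis and noting that $(T_{z_i}^*)^{N+1}$ annihilates exactly the monomials with $k_i \le N$ while shifting the others down, so the common kernel is the stated finite-dimensional span. Everything else — isometry of $T_\vp$, the projection formula $T_\vp T_\vp^* = P_{\vp H^2(\T^n)}$, and commutation of $T_\vp^*$ with the $T_{z_i}^*$ — is standard and was essentially recorded in the Preliminaries. I would keep the write-up to these few lines.
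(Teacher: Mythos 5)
Your proof is correct and follows essentially the same approach as the paper: both identify the factor $f_1 = T_\vp^* p$ (with $f = \vp f_1$) and show it is a polynomial by killing it with high-order backward shifts, relying on the commutation $T_\vp^* T_{z_i}^* = T_{z_i}^* T_\vp^*$. You reach $T_z^{*l} f_1 = 0$ a bit more directly via $T_\vp T_\vp^* = P_{\vp H^2(\T^n)}$ and $T_z^{*l} T_\vp^* p = T_\vp^* T_z^{*l} p = 0$, whereas the paper reduces to monomials and passes through the observation that $T_z^{*l} f = -T_z^{*l} g \in (\vp H^2(\T^n))^\perp$, but this is only a presentational difference.
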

\begin{proof}
It is enough to prove that $f \in H^\infty(\D^n)$. It is also enough to consider $p$ as a monomial. Fix $\bk \in \Z_+^n$, and suppose
\[
z^{\bk} = f \oplus g \in \vp H^2(\T^n) \oplus (\vp H^2(\T^n))^\perp.
\]
Let $\bl \in \Z_+^n$, and suppose $l_i > k_i$ for some $i =1, \ldots, n$. Since $T_z^{*\bl} (z^{\bk}) = 0$, it follows that
\[
T_z^{*\bl} f = - T_z^{*\bl} g.
\]
Since $g$ is in the quotient module $(\vp H^2(\T^n))^\perp$, we conclude that
\[
T_z^{*\bl} f \in (\vp H^2(\T^n))^\perp.
\]
Now there exists $f_1 \in H^2(\T^n)$ such that $f = \vp f_1$. Consequently
\[
T_z^{*\bl} f = T_z^{*\bl} \vp f_1 \in (\vp H^2(\T^n))^\perp,
\]
and hence
\[
\langle T_z^{*\bl} \vp f_1, \vp h \rangle = 0,
\]
for all $h \in H^2(\T^n)$. Then, $T_\vp^* T_z^{*\bl} = T_z^{*\bl} T_\vp^*$ and $T_\vp^* T_\vp = I$ yield
\[
\begin{split}
\langle T_z^{*\bl} f_1, h \rangle & = \langle T_z^{*\bl} \vp f_1, \vp h \rangle
\\
& = 0,
\end{split}
\]
for all $h \in H^2(\T^n)$ and $l \in \Z_+^n$ such that $l_i > k_i$ for some $i =1, \ldots, n$. Therefore
\[
f_1 \in \bigcap_{|l|=|k|+1} \ker T_z^{*l}.
\]
and hence
\[
f_1 \in \text{span} \{z^t: t \in \Z_+^n, |t| \leq |k|+1\}.
\]
We conclude that
\[
f \in \text{span} \{z^t \vp: t \in \Z_+^n, |t| \leq |k|+1\} \subseteq H^\infty(\D^n).
\]
This completes the proof of the proposition.
\end{proof}

A similar question could be posed for other classes of functions. What about the decomposition of a rational function with respect to a Beurling decomposition, for example?

\section{Concluding remarks}\label{sec: final}

We start off by commenting on the commutant lifting theorem. Let us recall Ball, Li, Timotin, and Trent's commutant lifting theorem \cite[Theorem 5.1]{Ball et}, which is only relevant for $n=2$ case in our context.

\begin{theorem}
Let $\clq \subseteq H^2(\T^2)$ be a quotient module, and let $X \in \clb_1(\clq)$ be a module map. Then $X$ admits a lift if and only if there exist positive operators $G_1, G_2 \in \clb(\clq)$ such that $G_1 - S_{z_2} G_1 S_{z_2}^* \geq 0$ and $G_2 - S_{z_1} G_2 S_{z_1}^* \geq 0$, and
\[
I - X X^* = G_1 + G_2.
\]
\end{theorem}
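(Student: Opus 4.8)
The plan is to prove the two implications separately. In each the real content is a translation between the operator data living on the model space $\clq$ and the function theory of $\cls(\D^2)$, and the single indispensable external input is Agler's theorem that for $n=2$ — and \emph{only} then — the Schur class $\cls(\D^2)$ coincides with the Schur--Agler class, equivalently Ando's inequality (commuting contractions have a unitary dilation). I would first record the elementary facts I will use repeatedly: since $\clq$ is a quotient module, $\clq^\perp$ is shift invariant, hence $S_{z_i}^*=T_{z_i}^*|_\clq$, and if $X=S_\vp$ then also $X^*=T_\vp^*|_\clq$; moreover $\clq^\perp$ being $T_{z_i}$-invariant gives $P_\clq T_{z_i}(I-P_\clq)=0$, i.e. $P_\clq T_{z_i}P_\clq=P_\clq T_{z_i}$ on $H^2(\T^2)$.

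\textbf{Sufficiency.} Suppose positive $G_1,G_2\in\clb(\clq)$ satisfy $I_\clq-XX^*=G_1+G_2$ and $H_1:=G_1-S_{z_2}G_1S_{z_2}^*\ge0$, $H_2:=G_2-S_{z_1}G_2S_{z_1}^*\ge0$. Rewriting these as $G_1=H_1+S_{z_2}G_1S_{z_2}^*$, $G_2=H_2+S_{z_1}G_2S_{z_1}^*$ and combining with $XX^*+G_1+G_2=I_\clq$, one obtains a \emph{lurking isometry}: two explicit columns assembled from $X^*,\,S_{z_1}^*,\,S_{z_2}^*$ and the square roots $G_j^{1/2},H_j^{1/2}$ have the same Gram form, so there is a well-defined isometry $V$ between the corresponding subspaces. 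Extending $V$ to a unitary colligation $U=\begin{pmatrix}A&B\\ C&D\end{pmatrix}$ graded over a two-part auxiliary space $\clh=\clh_1\oplus\clh_2$ and forming the transfer function $\vp(z)=D+C(I-\Delta_zA)^{-1}\Delta_zB$ with $\Delta_z=z_1P_{\clh_1}+z_2P_{\clh_2}$, one gets $\vp\in\cls(\D^2)$ (being the transfer function of a unitary $\D^2$-colligation it lies in the Schur--Agler class, which for $n=2$ equals $\cls(\D^2)$), and the intertwining relations built into $V$ force $S_\vp=X$. Thus $X$ is liftable; note this half works verbatim for every $n$.

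\textbf{Necessity.} Suppose $X=S_\vp$ with $\vp\in\cls(\D^2)$. Because $n=2$, Agler's decomposition theorem applies to $\vp$, and in its operator form on $H^2(\T^2)$ it yields positive operators $\Gamma_1,\Gamma_2$ with $I-T_\vp T_\vp^*=\Gamma_1+\Gamma_2$, $\Gamma_1-T_{z_2}\Gamma_1T_{z_2}^*\ge0$ and $\Gamma_2-T_{z_1}\Gamma_2T_{z_1}^*\ge0$ (these $\Gamma_j$ are manufactured from the positive kernels $K_1,K_2$ in $1-\vp(z)\overline{\vp(w)}=(1-z_1\bar w_1)K_1(z,w)+(1-z_2\bar w_2)K_2(z,w)$). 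Setting $G_j:=P_\clq\Gamma_j|_\clq\ge0$ and using the reductions above ($X^*=T_\vp^*|_\clq$, $S_{z_i}^*=T_{z_i}^*|_\clq$, $P_\clq T_{z_i}P_\clq=P_\clq T_{z_i}$), one checks the two routine identities $G_1+G_2=P_\clq(I-T_\vp T_\vp^*)|_\clq=I_\clq-XX^*$ and $G_j-S_{z_i}G_jS_{z_i}^*=P_\clq\bigl(\Gamma_j-T_{z_i}\Gamma_jT_{z_i}^*\bigr)|_\clq\ge0$, the last because a compression of a positive operator is positive. This is exactly the required data.

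\textbf{Main obstacle.} The conceptual bottleneck sits in the necessity half: it relies on the Agler/Ando decomposition, and it is precisely the failure of Schur $=$ Schur--Agler for $n\ge3$ that blocks any hope of this criterion being an equivalence past the bidisc. On the technical side the bookkeeping-heavy step is the sufficiency half — fixing the grading of $\clh$, verifying that the lurking isometry is genuinely well defined (equality of the two Gram forms), and extracting $S_\vp=X$ from the colligation identities — together with, in the necessity half, confirming that the kernels $K_j$ define \emph{bounded} operators $\Gamma_j$ surviving compression to $\clq$ (automatic when $\dim\clq<\infty$, and otherwise handled by the growth bound $K_j(w,w)\le\bS(w,w)$).
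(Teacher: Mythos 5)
The paper does not prove this theorem; it is quoted in Section \ref{sec: final} from Ball, Li, Timotin, and Trent with the one-line attribution that ``the proof is based on Agler's transfer function realization formula for functions in $\cls(\D^2)$.'' Your sketch follows exactly that route --- Agler decomposition for necessity, lurking isometry / unitary colligation / transfer function for sufficiency --- so it matches the approach the paper cites, and the compression identities you use in the necessity half are correct: $S_{z_i}^*=T_{z_i}^*|_\clq$, $X^*=T_\vp^*|_\clq$, and $P_\clq T_{z_i}=P_\clq T_{z_i}P_\clq$ give $G_j - S_{z_i}G_j S_{z_i}^* = P_\clq(\Gamma_j - T_{z_i}\Gamma_j T_{z_i}^*)|_\clq \geq 0$ and $G_1+G_2 = I_\clq - XX^*$. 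One imprecision worth fixing: the boundedness of the $\Gamma_j$ on $H^2(\T^2)$ does not follow from the diagonal bound $K_j(w,w)\leq\bS(w,w)$ alone, since a pointwise diagonal estimate does not give kernel dominance. The clean argument is that, with $\tilde K_1=K_1/(1-z_2\bar w_2)$ and $\tilde K_2=K_2/(1-z_1\bar w_1)$, the Agler decomposition multiplied by $\bS$ gives $\tilde K_1+\tilde K_2=\bS-\bS\,\vp(z)\overline{\vp(w)}\preceq\bS$ as positive kernels, and since each $\tilde K_j\succeq 0$ one gets $\tilde K_j\preceq\bS$; this produces contractions $0\leq\Gamma_j\leq I$ on $H^2(\T^2)$ with $\Gamma_1+\Gamma_2=I-T_\vp T_\vp^*$. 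Finally, your observation that the sufficiency half works for every $n$ (it always manufactures a Schur--Agler, hence Schur, lift) while it is the necessity half that forces $n=2$ is correct, and it is precisely the reason this paper develops its own $L^1$--$L^\infty$ duality criterion, Theorem \ref{thm: combine CLT}, which characterizes Schur-class lifts for all $n$ rather than Schur--Agler lifts.
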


The proof is based on Agler's transfer function realization formula for functions in $\cls(\D^2)$ (which we will comment on more about below). In contrast to the preceding theorem, however, our commutant lifting theorem appears to be more explicit. For instance, Theorem \ref{thm: new CLT 1} has been validated for the examples constructed in Corollary \ref{cor: hom poly fail} (see Section \ref{subsect: example verification}).

Now we turn to the interpolation problem. We already mentioned in Section \ref{sec: 1} that the traditional approach to solving the interpolation problem in terms of the positivity of the Pick matrix (or family of Pick matrices) in higher variables produces only limited results. There is, however, likely to be one notable exception: interpolation on $\D^2$, which Agler \cite{Agler 1, Agler} pioneered in his seminal papers in the late '80s (also see \cite[Theorem 1.3]{AM Crelle}):

\begin{theorem}\label{thm: Agler inter}
Let $\{(\alpha_i, \beta_i)\}_{i=1}^m$ be a set of distinct points in $\D^2$ and let $\{w_i\}_{i=1}^m \subset \D$. There exists $\vp \in \cls(\D^2)$ such that
\[
\vp(\alpha_i, \beta_i) = w_i,
\]
for all $i=1, \ldots, m$, if and only if there exist positive semi-definite $m \times m$ matrices $\Gamma = (\Gamma_{ij})$ and $\Delta = (\Delta_{ij})$ such that
\[
(1 - \bar{w}_i w_j) = (1 - \bar{\alpha}_i \alpha_j) \Gamma_{ij} + (1 - \bar{\beta}_i \beta_j) \Delta_{ij},
\]
for all $i,j = 1,\ldots, m$.
\end{theorem}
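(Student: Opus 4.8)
The plan is to deduce this from Agler's transfer-function realization of the Schur class of the bidisc via the standard ``lurking isometry'' argument; this is the genuinely two-variable input that makes the theorem work, with no counterpart for $n \geq 3$. Write $z_i = (\al_i, \beta_i) \in \D^2$. Throughout I would use Agler's theorem: $\vp \in \cls(\D^2)$ if and only if there are positive semi-definite kernels $K_1, K_2 : \D^2 \times \D^2 \raro \mathbb{C}$ with
\[
1 - \overline{\vp(w)} \vp(z) = (1 - \bar{w}_1 z_1) K_1(z, w) + (1 - \bar{w}_2 z_2) K_2(z, w) \qquad (z, w \in \D^2),
\]
equivalently $\vp$ is the transfer function of a contractive block colligation on $\mathbb{C} \oplus \clk_1 \oplus \clk_2$.

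For the \emph{necessity} direction, given an interpolant $\vp \in \cls(\D^2)$ with $\vp(z_i) = w_i$, I would fix an Agler decomposition $K_1, K_2$ and set $\Gamma_{ij} = K_1(z_j, z_i)$, $\Delta_{ij} = K_2(z_j, z_i)$. Since $K_1, K_2$ are positive semi-definite Hermitian kernels, the matrices $(K_1(z_i, z_j))$ and $(K_2(z_i, z_j))$ are positive semi-definite, hence so are their transposes $\Gamma$ and $\Delta$; and evaluating the Agler identity at $z = z_j$, $w = z_i$ with $\vp(z_i) = w_i$ produces exactly
\[
1 - \bar{w}_i w_j = (1 - \bar{\al}_i \al_j) \Gamma_{ij} + (1 - \bar{\beta}_i \beta_j) \Delta_{ij}.
\]

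For the \emph{sufficiency} direction, starting from $\Gamma, \Delta \geq 0$ satisfying the identity, I would take a Kolmogorov factorization $\Gamma_{ij} = \langle u_i, u_j \rangle_{\clk_1}$, $\Delta_{ij} = \langle v_i, v_j \rangle_{\clk_2}$ and rearrange the identity to
\[
1 + \bar{\al}_i \al_j \langle u_i, u_j \rangle + \bar{\beta}_i \beta_j \langle v_i, v_j \rangle = \bar{w}_i w_j + \langle u_i, u_j \rangle + \langle v_i, v_j \rangle ,
\]
i.e. the families $\{(1, \al_i u_i, \beta_i v_i)\}_{i=1}^m$ and $\{(w_i, u_i, v_i)\}_{i=1}^m$ in $\mathbb{C} \oplus \clk_1 \oplus \clk_2$ have equal Gram matrices. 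Hence $(1, \al_i u_i, \beta_i v_i) \mapsto (w_i, u_i, v_i)$ extends to an isometry between two finite-dimensional subspaces, and then to a contraction $U = \left[\begin{smallmatrix} A & B \\ C & D \end{smallmatrix}\right]$ on $\mathbb{C} \oplus (\clk_1 \oplus \clk_2)$ (pad by $0$ on the orthogonal complement of the domain). With $E(z) = z_1 I_{\clk_1} \oplus z_2 I_{\clk_2}$ I would define the transfer function $\vp(z) = A + B E(z)(I - D E(z))^{-1} C$, which lies in $\cls(\D^2)$ by Agler's realization theorem; reading the colligation identity $U(1, \al_i u_i, \beta_i v_i)^{\top} = (w_i, u_i, v_i)^{\top}$ componentwise gives $(u_i, v_i)^{\top} = (I - D E(z_i))^{-1} C$ and then $w_i = A + B E(z_i)(u_i, v_i)^{\top} = \vp(z_i)$.

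The only nonelementary ingredient, and the expected main obstacle, is Agler's realization theorem itself — that $\cls(\D^2)$ coincides with the Schur--Agler class of the bidisc and with transfer functions of contractive colligations. This is where the distinguished boundary $\T^2$ and And\^o's inequality for two commuting contractions enter, and it is exactly what breaks down on $\D^n$ for $n \geq 3$; the rest is bookkeeping, modulo the routine checks that $I - D E(z)$ is invertible on $\D^2$ (as $\|D\| \leq 1$ and $\|E(z)\| < 1$) and that the partially defined isometry always extends to a global contraction.
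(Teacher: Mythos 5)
Your proof is correct and is the standard argument via Agler's realization/colligation formula and the lurking-isometry construction. The paper does not actually prove this theorem: it cites it as Agler's result (with references to Agler's papers and to Agler--McCarthy), and explicitly remarks that it follows from Agler's transfer-function realization of $\cls(\D^2)$ — which is precisely the route you take.
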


This is clearly an analogue of the solution to the classical Nevanlinna–Pick interpolation problem (also see Cole and Wermer \cite{CW-1, CW-2, CW-3}). In a slightly different context, see Kosi\'{n}ski \cite{Kosinski} for three-point interpolation problem (also, see Cotlar and Sadosky \cite{Cora, CS 1, CS 2}). Whereas the above result appears to be abstract (particularly the existence of positive semi-definite matrices), the approach is useful in a variety of other problems. Indeed, based on the Ando dilation and the von Neumann inequality for pairs of commuting contractions \cite{Ando}, Agler derived a realization formula for Schur functions in terms of colligation matrices, which leads to the above solution to the interpolation problem. His realization formula has proven very useful in operator theory and function theory on $\D^n$, $n \geq 2$. Whereas we believe Theorem \ref{thm: interpolation} is more concrete and provides a new perspective on the interpolation problem in general, we are unsure how to relate it to Theorem \ref{thm: Agler inter}. We are also unclear about using Theorem \ref{thm: Agler inter} to validate the examples of interpolation in Theorem \ref{thm: eg interpolation} for the specific case of $n=2$.

Finally, we remark that, unlike the present case of scalar functions, the earlier lifting theorem and the solutions to the interpolation problem work equally well for the operator or vector-valued functions \cite{AM Crelle, Ball et, Ball-Trent}. The powerful $n$-variables von Neumann inequality (which is automatic in the case of $n=2$ but not so when $n>2$), like the Sz.-Nagy and Foia\c{s} \cite{NF 68} effective dilation theoretic approach appears to be a key factor. However, as previously stated, we followed a function theoretic route pioneered by Sarason in his work \cite{Sarason}. The results reported here, we think, will be also helpful in building related theories like isometric dilations for commuting contractions, several variables von Neumann inequality, Nehari problem on $\D^n$, etc., similar to Sarason's classic result.

\vspace{0.2in}

\noindent\textbf{Acknowledgement:}
We thank the referees for their valuable comments. We are also thankful to Professor Gadadhar Misra and Professor Nikolai Nikolski for their insightful comments and observations. The research of the second named author is supported in part by the Core Research Grant (CRG/2019/000908), and TARE (TAR/2022/000063) by SERB, Department of Science \& Technology (DST), Government of India.

\end{document}